\documentclass[reqno,twoside,12pt]{amsart}

\setlength{\hoffset}{-1.7cm}
\setlength{\voffset}{0cm}
\setlength{\textwidth}{15.5cm}
\setlength{\textheight}{20cm}

\theoremstyle{plain}

\numberwithin{equation}{section}

\usepackage{comment}
\usepackage{amsmath}
\usepackage{fancybox,color}
\usepackage{latexsym}
\usepackage{mathrsfs}
\usepackage{textcomp}
\usepackage{pxfonts}
\usepackage{esint}
\usepackage{textcomp}
\usepackage{vmargin}
\usepackage{bbm}
\usepackage[active]{srcltx}
\usepackage[colorlinks]{hyperref}



\newtheorem{theo}{Theorem}[section]
\newtheorem{lem}[theo]{Lemma}

\newtheorem{cor}[theo]{Corollary}


\usepackage{graphics}
\input{epsf}

\def\C{{\rm\kern.24em \vrule width.02em height1.4ex
depth-.05ex\kern-.26em C}}

\def\R{{\rm I\hspace{-0.4ex}R}}
\def\B{{\rm I\hspace{-0.4ex}B}}
\def\D{{\rm I\hspace{-0.4ex}D}}

\def\be{\begin{equation}}
\def\ee{\end{equation}}
\def\beq{\begin{equation}}
\def\eeq{\end{equation}}

\usepackage{graphics}
\input{epsf}

\begin{document}

     \title{On $p$-Harmonic Measures in Half Spaces}


\author[J. G. Llorente]{J. G. Llorente}
\address{Departament de Matem\`atiques, Universitat Aut\`onoma de Barcelona, 08193 Bellaterra, Barcelona, Spain}
\email{jgllorente@mat.uab.cat}

\author[J.J. Manfredi]{J. J. Manfredi}
\address{Department of Mathematics, University of Pittsburgh, 301 Thackeray Hall, Pittsburgh, PA 15260}
\email{manfredi@pitt.edu}

\author[W.C. Troy]{W. C. Troy}
\email{troy@math.pitt.edu}

\author[J.M. Wu]{J. M.  Wu}
\address{Department of Mathematics, University of Illinois at Urbana-Champaign, 1409 West Green street, Urbana, IL 61801}
\email{jmwu@illinois.edu}

 \maketitle

\let\thefootnote\relax\footnotetext{\hspace{-7pt}\emph{Keywords:}$p$-laplacian, $p$-harmonic measure, shooting method.

MSC2010: 34B40, 34C11, 35J60.

First author  supported by grants MTM2017-85666-P, 2017 SGR 395 (Spain).

Fourth author  supported by Simon Foundation grant \#353435}

\begin{abstract} For  all $1<p<\infty$ and $N\ge 2$ we prove that there is a constant $\alpha(p,N)>0$ such that the
$p$-harmonic measure in $\R^N_+$ of a ball of  radius $0 < \delta
\leq 1$ in $\R^{N-1}$ is bounded above and below by a constant times
$\delta ^{\alpha (p.N)}$. We provide explicit estimates for the
exponent $\alpha(p,N)$.
\end{abstract}
\noindent




\section{Introduction }

In this paper we study $p$-harmonic measures, which in the case $p=2$, are harmonic functions defined as follows: given a domain $\Omega \subset \R^N$, a point $x\in \Omega$,  and a subset $E\subset
\partial \Omega$, the harmonic measure of $E$ from $x$  in $\Omega$, denoted $\omega (E, x, \Omega )$,  is
the value at $x$ of the harmonic function  $\omega (E, x, \Omega )$ satisfying
\begin{equation*}
\omega (E, x, \Omega )=\left\{
\begin{array}{l}
1  \textrm{ for }x\in E\\
    0  \textrm{ for } x\in\partial\Omega\setminus E
\end{array}
\right.
\end{equation*}
when $\Omega$ and $E$ are sufficiently regular. It follows from the
linearity of the Laplace operator that for fixed $x$, $\omega (. ,
x, \Omega )$ is a probability measure on $\partial \Omega$ and from
the Harnack property $\omega (., x,\Omega ) $ and $\omega (., y ,
\Omega )$ are mutually absolutely continuous.  The study of the
metric properties of harmonic measure and its connection to
Hausdorff measures on $\partial \Omega $ has played a fundamental
role in the development of modern Geometric Function Theory and it
is related to several branches of mathematics as PDE's, Probability,
Potential Theory and Dynamical systems among others.
\par
Our main focus is on the case $p\not=2$ when  the relevant differential equation is the non linear $p$-Laplace equation
 \begin{eqnarray}
\textrm{div}\left( |\nabla u |^{p-2} \nabla u \right) = 0  \, \, \, \, \, \, \, \, (1 < p
< \infty ). \label{plaplace}
\end{eqnarray}
Weak solutions of (\ref{plaplace}) in the Sobolev space
$W^{1,p}_{\textrm{loc}}(\Omega )$ are called \textit{$p$-harmonic} functions
in $\Omega$. When $p \to \infty $ we formally obtain another
differential operator which is not in divergence form, the so called
\textit{infinity laplacian} $\triangle_{\infty}$ given by
\begin{eqnarray}
\triangle_{\infty}u =  \sum_{i,j=1}^N u_{x_i}u_{x_j} u_{x_i x_j}
\label{infty}
\end{eqnarray}
and solutions of (\ref{infty}) in the viscosity sense are
called \textit{infinity harmonic} functions.
\par
For $p\not=2$ the definition of $p$-harmonic measure $\omega_p (.,
x, \Omega ) $ follows the above potential theoretic approach (see
\cite{HKM}). Because of the nonlinearity, $p$-harmonic measures
are  more difficult to handle  and lack some of the nice properties
available in the linear case $p=2$. It is important to recognize that $\omega_p (.)$ is
no longer a measure, not even at the zero level (see \cite{LLMW}).
In the case $p=2$ it is easy to estimate the harmonic measure
of subsets of the boundary of the ball or the upper half-space, due
to the explicit expression of the Poisson kernel in a ball or a
half-space.  The situation is more complicated when $p\neq 2$, even for
simple subsets of the boundary like spherical caps or  half-space balls.
One of the first results on this direction was obtained by Peres,
Schramm, Sheffield, and Wilson (\cite{P}). They proved that
\begin{eqnarray}
\omega_{\infty} (C_{\delta}, 0,\B_N ) \approx \delta^{1/3},
\label{inftymeas}
\end{eqnarray}
where $\B_N$ denotes the unit ball in $\R^N$, $C_{\delta}$ is any
spherical cap of radius $\delta$ and $\omega_{\infty}$ stands for
the $\infty$-harmonic measure. The proof of
(\ref{inftymeas}) is based on two facts. First,  because
of  rotational invariance, the problem can be reduced to  two
dimensions. The second is the use of quasiradial singular $\infty$-harmonic functions
 obtained by Aronsson \cite{Ar1}, which are  of the
  form $r^{k}f(\phi ) $ where $(r,\phi )$ denote polar
coordinates in the plane and $k=-1/3$. The function $r^{-1/3}f(\phi ) $ plays the role of the Poisson kernel in the case
$p=2$ and is used to estimate  $\omega_{\infty} (C_{\delta}, 0,\B_N )$.
\par
Next, we recall properties of the Poisson kernel $P(x,z')$ in the upper
half-space $\R^N_+$ defined by
\begin{eqnarray}
P(x,z') = \frac{x_N}{(|x'-z'|^2 + x_N^2 )^{N/2}} \label{poisson}.
\end{eqnarray}
 Here $z'\in \R^{N-1}$ and  $x=(x', x_N) \in
\R^N_+$, where $x'\in \R^{N-1}$, and $x_N
>0$.
It is well known that $P(., z')$ is positive and harmonic in
$\R^{N}_+$ and has unrestricted boundary values $0$ on $\R^{N-1}
\setminus \{z'\} $ and nontangential limits $+\infty$  when $x\to z'$, for
every $z'\in \R^{N-1}$. Set $z'=0$ and define $P(x) = P(x,0)$.  Since $P$  depends only on the distance to the origin $r =
|x|$ and the azimuth angle $\theta$ formed by $x$ and the positive
$x_N$-axis, we write
$$
P(x) = P(r, \theta ) = r^{-(N-1)} \cos \theta,
$$
so the singularity at the origin is of order $r^{-(N-1)}$.
In the case $p\not=2$ the analogue of the Poission kernel is played
by quasiradial functions  $u = r^k f(\theta )$  with $k<0$ ,  and
the following assumptions on $f$:
\begin{eqnarray}
 & f  :[0, \pi /2] \to \R  \, \, \textnormal{is positive and
 decreasing,}
\label{f1} \\
 & f  \in C^2[0, \pi /2],  \label{f2} \\
 & f(0)  = 1, \,  f'(0) = 0, \, f(\pi /2) = 0 \,  \, \textnormal{and} \, \,  -\infty
< f'(\pi /2) <0. \label{f3}
\end{eqnarray}

\subsection{Previous Results}
Using Aronsson's positive singular quasiradial $p$-harmonic
functions in the plane, Lundström and Vasilis (\cite{L}) obtained
sharp estimates for $p$-harmonic measures in domains satisfying
appropriate regularity assumptions. They proved in particular that

\begin{eqnarray}
\omega_p (I_{\delta}, i, \R^2_+ )  \,  \approx  \, \delta^q  &  \,
\, \,
\, \, \, \text{if} \, \, \, 1 < p < \infty  \vspace{0.2cm}\,\,\,\,\,\,\,\,\,\text{    and }\\
 \omega_p (A_{\delta}, 0, \D ) \,  \approx \,  \delta^q  & \, \, \, \, \,
\,
 \text{if} \, \, \, p\geq 2, 
\end{eqnarray}
where
\begin{eqnarray}
q = \frac{3-p + 2\sqrt{p^2 -3p +3}}{3(p-1)},
\end{eqnarray}
 the interval $I_{\delta} = [-\delta , \delta ] \subset \R$, the arc  $A_{\delta}$
is  of length $\delta$ on the unit circle,  and $\D$ is
 the unit disc.
\par
The case $p=N$ is special because of  conformal invariance.
Hirata (\cite{H}) proved the estimate $\omega_N ( B(\xi, \delta )
\cap \partial \Omega , x_0, \Omega ) \approx \delta$ if $\Omega $ is
$C^{1,1}$. Recently, DeBlassie and Smits (\cite{deb}, \cite{deb2})
have made further contributions in the case $N \geq 3$ and general
$p$. The key point in \cite{deb} is the computation of $\triangle_p
u$ where $u = r^{k}f(\theta )$ is a quasiradial function.
Up to a positive factor, $\triangle_p u $ is given by the
following differential expression
\begin{eqnarray}
[(p-1)(f')^2 +k^2 f^2]f''  + k[(2p-3)k +N-p]f(f')^2 + \nonumber \\
k^3[k(p-1) +N-p]f^3  + (N-2)[(f')^2 +k^2 f^2]f'\cot \theta.
\label{sign1}
\end{eqnarray}
Observe that if  $f$ is  decreasing and $\theta \in [0, \pi /2]$,
the fourth term  in (\ref{sign1}) is negative. Thus, solutions to the reduced
equation
\begin{eqnarray}\label{112}
[(p-1)(f')^2 +k^2 f^2]f''  + k[(2p-3)k +N-p]f(f')^2 + \nonumber \\
k^3[k(p-1) +N-p]f^3  = 0 \label{signredu}
\end{eqnarray}
provide  $p$-superharmonic functions. This approach gives upper
bounds for $p$-harmonic measure in the ball or the upper half-space
\cite{deb} (see section 2 below). In \cite{deb2} another reduction
of \eqref{sign1}
 is used to obtain additional upper and lower bounds under
appropriate restrictions on $N$, $k$ and $p$, which are
 complementary to those of \cite{deb}.
 \par

\subsection{Main Results}

Our first main result provides explicit estimates for the
$p$-harmonic measure of a ball in the boundary of a half-space.
Because of the translation invariance of the $p$-harmonic equation
and the Harnack property we assume that the ball is centered at the
origin and the base point lies at distance one above the center.

\begin{theo}\label{bounds}
Let $N\geq 2$, $1< p < \infty$, $0 < \delta \leq 1$, $B_{\delta}
\subset \R^{N-1}$ the ball in $\R^{N-1}$ centered at the origin of
radius $\delta$ and $x_0 = (0,\cdots, 0,1)$.

\begin{enumerate}
\item If $\boldsymbol{1 < p \leq 3/2}$ then we have
\begin{eqnarray}
\omega_p (B_{\delta}, x_0 , \R^{N}_+ ) \leq C_2 \,
\delta^{\frac{N-1}{p-1}}.  \label{estimate1}
\end{eqnarray}
\item If $\boldsymbol{3/2 <p \leq 2}$ then we have
\begin{eqnarray}
C_1 \, \delta ^{\frac{p+N-3}{2p-3}} \leq \omega_p (B_{\delta}, x_0 ,
\R^{N}_+ ) \leq C_2 \, \delta^{\frac{N-1}{p-1}}. \label{estimate2}
\end{eqnarray}
\item If $\boldsymbol{2\leq p \leq N}$ then we have
\begin{eqnarray}
C_1 \, \delta^{\frac{N-1}{p-1}} \leq \omega_p (B_{\delta}, x_0 ,
\R^{N}_+ ) \leq C_2 \, \delta^{\frac{p+N-3}{2p-3}}. \label{estimate3}
\end{eqnarray}
\item If $\boldsymbol{p\geq N}$ then we have
\begin{eqnarray}
C_1 \, \delta ^{\frac{p+N-3}{2p-3}} \leq \omega_p (B_{\delta}, x_0 ,
\R^{N}_+ ) \leq C_2 \, \delta^{\frac{N-1}{p-1}}. \label{estimate4}
\end{eqnarray}
\end{enumerate}
In each case, $C_1$ and $C_2$ are positive constants only
depending on $N$ and $p$.
\end{theo}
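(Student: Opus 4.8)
The plan is to build explicit barrier functions of the quasiradial form $u = r^k f(\theta)$, use them as super- or sub-solutions via the comparison principle for the $p$-Laplacian, and match the exponents of $\delta$ that appear. The starting observation is that the set $B_\delta \subset \R^{N-1}$, viewed as a boundary subset of $\R^N_+$, rescales nicely: dilation $x \mapsto x/\delta$ maps $\R^N_+$ to itself, carries $B_\delta$ to $B_1$, and moves the base point $x_0$ to $(0,\dots,0,1/\delta)$, a point at distance $1/\delta$ above $B_1$. Hence, by scaling invariance of $p$-harmonicity, $\omega_p(B_\delta, x_0, \R^N_+) = \omega_p(B_1, (0,\dots,0,1/\delta), \R^N_+)$, and the problem reduces to estimating the $p$-harmonic measure of the \emph{unit} ball from a point far above it. A quasiradial barrier $u = r^k f(\theta)$ with $k<0$ and $f$ satisfying \eqref{f1}--\eqref{f3} then gives, at the far point $r \approx 1/\delta$, a value comparable to $\delta^{-k} f(\theta) \approx \delta^{|k|}$; so the exponent $\alpha(p,N)$ will be precisely $|k|$ for an admissible $k$.

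The two exponents in the statement, $\tfrac{N-1}{p-1}$ and $\tfrac{p+N-3}{2p-3}$, are exactly the values of $|k|$ coming from the two reductions of \eqref{sign1} already recorded in the excerpt. The upper bound using $\tfrac{N-1}{p-1}$: take $k = -\tfrac{N-1}{p-1}$, which is the exponent for which $k(p-1)+N-p = -(N-1)+N-p+(p-1)\cdot 0 \dots$ — more precisely it is the choice making the pure-radial part of $\triangle_p$ (the coefficient $k^3[k(p-1)+N-p]$ together with the leading term) degenerate, so that a simple explicit $f$ (essentially $f(\theta)=\cos\theta$ up to normalization) yields a $p$-superharmonic function; comparison against $\mathbf 1_{B_1}$ gives the upper bound. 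For the lower bounds one uses the solutions of the reduced equation \eqref{signredu}, whose admissible exponent is $k = -\tfrac{p+N-3}{2p-3}$ (the root of $k(2p-3)+p+N-3=0$); by the sign analysis of \eqref{sign1}, dropping the negative $\cot\theta$ term, a solution of \eqref{signredu} is $p$-\emph{superharmonic} in the relevant range, giving \emph{upper} bounds, while in the complementary range one instead needs a genuine sub-solution (or a truncated/normalized barrier supported near the cap) to get the \emph{lower} bound. The role of the thresholds $p=3/2$, $p=2$, $p=N$ is to control the signs: $2p-3$ changes sign at $p=3/2$, the comparison $\tfrac{N-1}{p-1}$ versus $\tfrac{p+N-3}{2p-3}$ flips at $p=2$ and again at $p=N$, so in each of the four regimes exactly one of the two barriers is available on each side, producing the one-sided or two-sided estimates as stated.

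Concretely I would proceed as follows. (i) Record the scaling reduction above. (ii) Prove the universal upper bound $\omega_p(B_\delta,x_0,\R^N_+)\le C_2\,\delta^{(N-1)/(p-1)}$ for \emph{all} $1<p<\infty$ by exhibiting the explicit $p$-superharmonic barrier with $k=-(N-1)/(p-1)$ that dominates $\mathbf 1_{B_1}$ on $\partial\R^N_+$ and is $\ge$ something positive on $\partial B_1$; this handles the upper halves of \eqref{estimate1}, \eqref{estimate2}, \eqref{estimate4}. (iii) Establish the competing bound with exponent $\tfrac{p+N-3}{2p-3}$ via a solution of the reduced ODE \eqref{signredu}: show it satisfies \eqref{f1}--\eqref{f3} on $[0,\pi/2]$ (this is where the shooting-method input enters — proving existence of an $f$ with the right endpoint behavior), and check via \eqref{sign1} that it is $p$-superharmonic, giving the upper bound of \eqref{estimate3}. (iv) For the lower bounds, construct a compactly-supported-near-the-cap sub-solution of comparable homogeneity and compare from below; the admissible exponent on the sub-solution side is the larger of the two candidate exponents, which is why \eqref{estimate2} and \eqref{estimate4} get the lower exponent $\tfrac{p+N-3}{2p-3}$ while \eqref{estimate3} gets $\tfrac{N-1}{p-1}$. (v) Finally, in each of the four $p$-regimes, compare the two exponents and assemble the one- or two-sided inequality.

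The main obstacle is step (iii)/(iv): proving that the reduced ODE \eqref{signredu} (a nonlinear second-order ODE) actually has a solution $f$ on the full interval $[0,\pi/2]$ meeting conditions \eqref{f1}--\eqref{f3}, in particular hitting $f(\pi/2)=0$ with $-\infty<f'(\pi/2)<0$ and staying positive and decreasing in between. This is not a comparison-principle matter but an existence-and-qualitative-behavior question for the ODE, and it is exactly where the shooting method advertised in the keywords must be deployed: one shoots from $\theta=0$ with $f(0)=1$, $f'(0)=0$, tracks the solution, and shows by a continuity/monotonicity argument in the shooting parameter (here $k$, or an auxiliary scaling) that there is a choice for which the solution reaches zero at $\pi/2$ with finite negative slope. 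Controlling this, together with the book-keeping of signs that decides which barrier is a super- and which a sub-solution in each regime, is the crux; the comparison-principle deductions and the scaling reduction are then routine.
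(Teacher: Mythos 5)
Your overall architecture (quasiradial barriers $r^k f(\theta)$ plus the comparison principle plus scaling) is the paper's, but the proposal has two genuine gaps and one outright error. First, you route the construction of $f$ through an existence theorem for the reduced ODE \eqref{signredu} via a shooting argument, and you correctly flag this as the unresolved crux. In the paper no such machinery is needed for this theorem: one simply takes the explicit choice $f(\theta)=\cos\theta$, for which (after the substitution $g=\log f$, $t=\tan^2\theta$) the sign of $\triangle_p(r^k\cos\theta)$ on $[0,\pi/2]$ reduces to the sign of the affine function $\Lambda(t)=\alpha t+\beta k^2$ with $\alpha=(2p-3)k^2+(N-p)k+3-N-p$ and $\beta=(p-1)k^2+(N-p)k+1-N$. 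The two exponents $\tfrac{N-1}{p-1}$ and $\tfrac{p+N-3}{2p-3}$ are just the negative roots of $\beta$ and $\alpha$, and the four regimes of Theorem~\ref{bounds} come from elementary sign bookkeeping of these two quadratics (Theorem~\ref{explicitestimates}). The shooting method belongs to Theorem~\ref{existenceanduniqueness}, not here; by importing it you leave the actual proof incomplete where an explicit computation suffices.

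Second, the lower bound is not the ``routine'' comparison you describe. A subharmonic barrier $v=C\delta^{|k|}r^kf(\theta)$ vanishes identically on $\R^{N-1}$ (since $f(\pi/2)=0$) and blows up at the origin, which lies inside the cap; so $v\le\omega$ cannot be forced near the origin and no comparison directly on $\partial\R^N_+$ is possible. The paper (Theorem~\ref{lowerbound}) excises the half-ball $\widetilde B$ of radius $\delta/2$ about the center of the cap, invokes the Carleson estimate for $p$-harmonic measure (\cite{AHSZ}) to get $\omega\ge C(N,p)$ on $\partial\widetilde B\cap\R^N_+$, and only then compares $v$ with $\omega$ in $\R^N_+\setminus\widetilde B$. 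This external input is absent from your outline. Finally, your step (ii) asserts the upper bound $\omega_p\le C_2\delta^{(N-1)/(p-1)}$ for \emph{all} $1<p<\infty$: this is false (and inconsistent with your own assignment of \eqref{estimate3}). For $2<p<N$ the function $r^{-(N-1)/(p-1)}\cos\theta$ is $p$-\emph{sub}harmonic, which is exactly why that exponent appears on the lower-bound side of \eqref{estimate3} and the weaker exponent $\tfrac{p+N-3}{2p-3}$ must be used for the upper bound there; were your universal upper bound true, it would force $\alpha(p,N)=\tfrac{N-1}{p-1}$ throughout $[2,N]$, which is known to fail except at $p=2$ and $p=N$.
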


Our second main result is a purely ODE proof of the existence and
uniqueness of a singular quasiradial $p$-harmonic function $r^k
f(\theta )$ in $\R^N_+$, where   $f(\theta)$ satisfies the full
equation
\begin{eqnarray}\label{fullequation}
[(p-1)(f')^2 +k^2 f^2]f''  + k[(2p-3)k +N-p]f(f')^2 + \nonumber \\
k^3[k(p-1) +N-p]f^3  + (N-2)[(f')^2 +k^2 f^2]f'\cot \theta=0.
\end{eqnarray}

\begin{theo}\label{existenceanduniqueness}
Let $1 < p < \infty$ and $N\geq 2$. Then there is a unique $k=
k(p,N)<0 $ and a function $f: [0, \pi /2 ] \to \R$ satisfying
(\ref{f1}), (\ref{f2}) and (\ref{f3}) such that the quasiradial
function $r^k f(\theta )$ is $p$-harmonic in $\R^N_+$.
\end{theo}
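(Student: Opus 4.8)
The plan is to use a shooting method for the second-order ODE \eqref{fullequation} treating $k<0$ as a shooting parameter and $\theta$ as the independent variable on $[0,\pi/2]$. Fix the initial data dictated by \eqref{f3}, namely $f(0)=1$ and $f'(0)=0$. Near $\theta=0$ the term $f'\cot\theta$ is the only singular one, but since $f'(0)=0$ it is removable: writing the equation in the form $f'' = G(\theta,f,f',k)$ one checks that $G$ extends continuously to $\theta=0$ (because $f'\cot\theta \to f''(0)$ there), so by a standard argument there is a unique local $C^2$ solution $f(\cdot,k)$ which depends continuously — indeed smoothly — on $k$. I would first record this local existence/uniqueness and continuous dependence, then study the qualitative behavior of $f(\cdot,k)$ as $\theta$ increases.

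The heart of the argument is a dichotomy as $k$ ranges over $(-\infty,0)$. For each $k$ let $\theta^*(k)$ be the first zero of $f(\cdot,k)$ in $(0,\pi/2]$, if one exists. I would prove: (i) as $k\to 0^-$ the solution stays positive and close to the constant $1$ on all of $[0,\pi/2]$, so that $f(\pi/2,k)>0$ — i.e.\ no interior zero; (ii) as $k\to-\infty$ the zeroth- and first-order terms with the large coefficients $k^3[k(p-1)+N-p]$ and $k[(2p-3)k+N-p]$ dominate and force $f$ to bend down and vanish before $\theta=\pi/2$, i.e.\ $\theta^*(k)<\pi/2$. Both endpoints require careful estimates: for (i) a Gronwall-type bound on $|f-1|$ and $|f'|$ in terms of $|k|$; for (ii) a comparison/energy argument showing the solution cannot remain positive, perhaps by comparing with a solution of the reduced equation \eqref{signredu} (whose explicit behavior for large $|k|$ should be accessible) or by a direct convexity/sign analysis of $f''$ while $f>0$. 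One must also show that while $f>0$ on $[0,\theta^*)$ one has $f'<0$ on $(0,\theta^*)$, so that $f$ is decreasing (condition \eqref{f1}) and $f'(\theta^*)<0$ at the first zero; this follows by examining the sign of $f''$ at a putative interior critical point of $f$ using \eqref{fullequation}.

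Having established that $\{k : \theta^*(k) \text{ exists and } <\pi/2\}$ is nonempty (large $|k|$) and that for $k$ near $0$ no zero occurs before $\pi/2$, I would let
\[
k_0 = \sup\{\,k<0 : f(\cdot,k) \text{ has a zero in }(0,\pi/2)\,\}
\]
and use continuous dependence together with continuity of the first-zero map $k\mapsto\theta^*(k)$ to conclude that at $k=k_0$ the first zero occurs exactly at $\theta=\pi/2$, giving $f(\pi/2,k_0)=0$; the transversality $f'(\pi/2,k_0)<0$ comes from the sign analysis above, which also rules out $f'(\pi/2,k_0)=0$. This produces a function satisfying \eqref{f1}, \eqref{f2}, \eqref{f3} with $r^{k_0}f(\theta)$ $p$-harmonic in $\R^N_+$. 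Uniqueness of $k$ then follows from a monotonicity statement: $\theta^*(k)$ is strictly monotone in $k$ on the relevant range, so the equation $\theta^*(k)=\pi/2$ has at most one root; equivalently, two such singular quasiradial solutions with exponents $k_1<k_2<0$ would violate a comparison principle on a truncated cone (the one with the more negative exponent is larger near the axis, forcing strict inequality on the boundary cone, a contradiction). I expect the main obstacle to be the large-$|k|$ analysis in step (ii) — showing rigorously that the solution is forced to vanish strictly before $\pi/2$ — since the singular $\cot\theta$ term and the competition among the three cubic-in-$f$ terms make naive comparison delicate; a clean rescaling or a well-chosen Lyapunov/energy functional for \eqref{fullequation} will likely be needed, and establishing strict monotonicity of $\theta^*(k)$ for uniqueness is the second delicate point.
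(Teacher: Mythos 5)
Your existence argument is essentially the paper's: a topological shooting in the parameter $k$, tracking the location of the first zero of $f(\cdot,k)$ and passing to an extremal value of $k$ by continuous dependence. The paper implements this after the substitution $H=y'/y$, which turns \eqref{fullequation} into a first-order Riccati-type equation; the first zero of $f$ becomes the blow-up point of $H$, and the two endpoint estimates you correctly flag as the hard part of existence (survival to $\pi/2$ in one regime, forced vanishing before $\pi/2$ in the other) are then obtained by elementary differential inequalities of the form $H'\le c\,(H^2+k^2)$, integrated to show that $\arctan(H/k)$ must exceed $\pi/2$. One caveat on your step (i): for $1<p<N$ the relevant range is $k<\frac{N-p}{1-p}<0$, not all of $(-\infty,0)$. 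For $k\in\big(\frac{N-p}{1-p},0\big)$ one computes $f''(0)=\frac{k(1-p)}{N-1}\big(k-\frac{N-p}{1-p}\big)>0$, so the solution is initially \emph{increasing} and your sign analysis at interior critical points (which presupposes $f$ decreasing from the start) does not apply there; your supremum $k_0$ must be taken over $k$ below this threshold.

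The genuine gap is in uniqueness. You reduce it to strict monotonicity of $\theta^*(k)$ but supply no mechanism for proving it, and your fallback --- a comparison principle on a truncated cone --- is precisely the PDE route the paper deliberately avoids: both candidates $r^{k_1}f_1$ and $r^{k_2}f_2$ vanish on $\partial\R^N_+\setminus\{0\}$ and blow up at the vertex, so ordering them on the boundary of a truncated cone requires controlling the ratio of two singular $p$-harmonic functions near the vertex, which is a boundary Harnack/Martin boundary statement rather than an elementary comparison. The paper's device, following Coffman and Kwong, is to differentiate the \emph{normalized} logarithmic derivative $U=H/k=y'/(ky)$ with respect to $k$: the function $W=\partial U/\partial k$ satisfies a linear first-order equation $W'=QW+(1-p)(U^2+1)^2/((p-1)U^2+1)$ with $W(0)=0$ whose inhomogeneous term has a fixed sign, so variation of parameters gives $W<0$ outright. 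Integrating in $k$ yields $\frac{1}{k_1}\frac{y_1'}{y_1}>\frac{1}{k_2}\frac{y_2'}{y_2}>0$, hence $y_1\le y_2^{k_1/k_2}$ on $[0,\pi/2)$; since $k_1/k_2>1$ and $y_1'(\pi/2)<0$ (a transversality that must itself be proved, as the paper does), one gets $y_1>y_2^{k_1/k_2}$ just below $\pi/2$, a contradiction. The normalization by $k$ is the whole point --- without it the forcing term in the variational equation does not have a definite sign --- and this is the idea your proposal is missing.
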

\subsection{Remarks:}
\begin{enumerate}
\item Our proof of the existence part in Theorem \ref{existenceanduniqueness} is based on
topological shooting techniques; it is  entirely different from the
previous proofs, relying on PDE methods or complicated techniques
from Harmonic Analysis and PDE's on manifolds ( \cite{LN},
\cite{T83},  \cite{PV09}).

\item This shooting approach
is conceptually simpler  and has the added important advantage
that it is readily applicable to settings in which solutions do not
remain positive and can change sign \cite{ZL03}.\par
\end{enumerate}

Tolksdorff's proof(\cite{T83}) used PDE methods especially suited to
prove the existence of positive $p$-harmonic functions in cones,
which gives also solutions to \eqref{112} when the cone is the
half-space. He also indicated how to get uniqueness from the boundary Harnack inequality. In \cite{PV09}
Tolksdorff's method is extended to
 cover positive  singular solutions. \par
Our proof of the uniqueness  part in Theorem \ref{existenceanduniqueness} is not based on any type of boundary Harnack inequality nor on  Martin boundary estimates. Instead we give a direct proof that is a natural extension of our shooting approach, and is  motivated by the methods of the landmark papers by Kwong~\cite{K} and Coffman~\cite{C}.
\par
By setting $\alpha(p, N)=-k(p,N)$  we obtain the following
consequence of Theorem \ref{existenceanduniqueness} and the results
in section 2:
\begin{cor}
For all $p>1$ and $N\ge 2$  there exist a constant $\alpha(p, N)>0$ such that the $p$-harmonic measure in $\R^N_+$ of a ball of  radius $\delta >0$ in $\R^{N-1}$ satisfies
$$
C_1 \, \delta ^{\alpha(p, N)} \leq
\omega_p (B_{\delta}, x_0 ,\R^{N}_+ ) \leq
C_2 \, \delta^{\alpha(p, N)}. \label{estimate22}
$$
\end{cor}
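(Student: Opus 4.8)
The plan is to read off both inequalities from the comparison principle for $p$-harmonic functions, using as competitors rescaled and translated copies of the singular quasiradial function supplied by Theorem \ref{existenceanduniqueness}. Write $u(x)=|x|^{k}f(\theta)$ for the function of that theorem, where $k=k(p,N)<0$, $\theta$ denotes the angle between $x$ and the positive $x_N$-axis, and $f$ satisfies (\ref{f1})--(\ref{f3}); put $\alpha(p,N)=-k>0$. The only properties needed are that $u$ is positive and $p$-harmonic in $\R^N_+$, homogeneous of degree $k$, continuous on $\overline{\R^N_+}\setminus\{0\}$ with $u\equiv 0$ on $\R^{N-1}\setminus\{0\}$ (since $f(\pi/2)=0$), and that both $u$ and $\omega_p(B_\delta,\cdot,\R^N_+)$ vanish at infinity, so that the comparison principle is available on unbounded subdomains of $\R^N_+$ by exhaustion. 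It is enough to argue for $0<\delta<1$; the value $\delta=1$ is immediate after adjusting $C_1,C_2$, since $\omega_p(B_1,x_0,\R^N_+)$ is a fixed constant in $(0,1)$.

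For the \emph{upper} bound, translate the pole below the boundary. Set $p_\delta=(0,\dots,0,-\delta)$ and $\hat u(x)=u(x-p_\delta)$, which is positive and $p$-harmonic in $\{x_N>-\delta\}\supset\R^N_+$, hence a $p$-supersolution in $\R^N_+$. For $x\in B_\delta$ one has $|x-p_\delta|\le\delta\sqrt2$ and the angle of $x-p_\delta$ with the positive $x_N$-axis lies in $[0,\pi/4]$, so $\hat u(x)\ge(\delta\sqrt2)^{k}f(\pi/4)$ there, while $\hat u\ge0$ on $\R^{N-1}\setminus B_\delta$. Therefore $V:=\bigl(2^{\alpha}/f(\pi/4)\bigr)\,\delta^{\alpha}\hat u$ is a $p$-supersolution in $\R^N_+$ with $V\ge\mathbf 1_{B_\delta}$ on $\partial\R^N_+$, so $\omega_p(B_\delta,\cdot,\R^N_+)\le V$ in $\R^N_+$. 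Evaluating at $x_0$ and using $\hat u(x_0)=(1+\delta)^{k}\le1$ gives $\omega_p(B_\delta,x_0,\R^N_+)\le C_2\,\delta^{\alpha}$.

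For the \emph{lower} bound the pole at the origin cannot serve as a global subsolution (it is $+\infty$ on $B_\delta$), so compare instead in the exterior region $\Omega_\delta:=\R^N_+\setminus\overline{S^+_\delta}$, where $S^+_\delta=\{x\in\R^N_+:|x|<\delta\}$, whose boundary is the hemisphere $H_\delta=\{|x|=\delta,\ x_N\ge0\}$ together with $\R^{N-1}\setminus B_\delta$. In $\Omega_\delta$ the function $u$ is positive and $p$-harmonic, vanishes on $\R^{N-1}\setminus B_\delta$, and satisfies $u\le\delta^{k}$ on $H_\delta$; meanwhile, by dilation invariance, $\omega_p(B_\delta,x,\R^N_+)=\omega_p(B_1,x/\delta,\R^N_+)\ge a_\ast$ for $x\in H_\delta$, where $a_\ast:=\inf_{H_1}\omega_p(B_1,\cdot,\R^N_+)$. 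Hence $a_\ast\,\delta^{-k}u\le\omega_p(B_\delta,\cdot,\R^N_+)$ on $\partial\Omega_\delta$, so the same holds in $\Omega_\delta$ by the comparison principle, and evaluating at $x_0\in\Omega_\delta$ (where $u(x_0)=1$) yields $\omega_p(B_\delta,x_0,\R^N_+)\ge a_\ast\,\delta^{\alpha}=:C_1\,\delta^{\alpha}$. Together with the upper bound this proves the corollary.

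The crux is the strict positivity $a_\ast>0$: one must know that the $p$-harmonic measure of $B_1$ seen from the hemisphere of the same scale is bounded below by a constant depending only on $N$ and $p$. This follows from the strong maximum principle away from the edge $\partial B_1$ together with a scale-invariant lower bound for $\omega_p(B_1,\cdot,\R^N_+)$ near $\partial B_1$ --- facts of precisely the kind assembled in Section~2. Everything else is bookkeeping with homogeneity, dilation invariance and the comparison principle. (One may also invoke Section~2 directly: the function $r^{k}f(\theta)$ of Theorem \ref{existenceanduniqueness}, being at once a $p$-sub- and a $p$-supersolution of the required form, feeds into both the upper and the lower estimates of Section~2 with the \emph{same} $k$, and the corollary is obtained on setting $\alpha(p,N)=-k(p,N)$.)
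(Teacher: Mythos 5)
Your proof is correct and is essentially the paper's: the exactly $p$-harmonic quasiradial function $r^{k}f(\theta)$ of Theorem \ref{existenceanduniqueness} is simultaneously a $p$-super- and a $p$-subsolution, so it feeds into the upper and lower estimates of Section~2 (Theorems \ref{upperbound} and \ref{lowerbound}) with the same exponent $\alpha=-k$, which is precisely what your two comparison arguments re-derive in the half-space (and what your closing parenthetical states). The only noteworthy difference is that your lower bound compares on the hemisphere of radius $\delta$, whose equator lies on $\partial B_{\delta}$ where the boundary data jumps, whereas the paper uses radius $\delta/2$ so that the uniform positivity of $\omega_p$ on the hemisphere follows more directly from the Carleson estimate of \cite{AHSZ}; your constant $a_\ast>0$ is still correct, but it requires exactly the edge estimate you flag as the crux.
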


Finally, we summarize   specific cases where the critical exponent
$k(p,N)$ for equation \eqref{fullequation} with conditions
(\ref{f1}), (\ref{f2}), and (\ref{f3}) is known.

\begin{itemize}
\item $k(2,N) = -(N-1)$ (This corresponds to the case of the Poisson
kernel).
\item $ \displaystyle k(p,2) = - \frac{3-p + 2\sqrt{p^2 -3p
+3}}{3(p-1)}$ (see \cite{Ar1}, \cite{L}).

\item $k(N,N) = -1$, the conformal case, see \cite{H}.

\item $k(\infty, N) = -\frac{1}{3}$, see \cite{P}.

\end{itemize}

Note that in the case $p=1, N=2$ equation \eqref{112} has solutions
$ f(\theta)=\cos^{k}(\theta)$ for all $k$.  When $k<0$ this solution
does not satisfy $f(\pi/2)=0$, so that Theorem
\ref{existenceanduniqueness} does not hold for $p=1$. Apart from
such cases, little is known about the exponent $k(p,N)$.
\par

\par

The organization of the paper is as follows. We show in section 2
how the existence of singular $p$-subharmonic (resp.
$p$-superharmonic) functions in the upper half-space imply lower
(resp. upper) estimates for $p$-harmonic measure (Theorems
\ref{upperbound} and \ref{lowerbound}).  This section contains
standard material based on comparison arguments that  we have
included  for completeness. Section 3 is devoted to the proof of
Theorem \ref{explicitestimates}, which characterizes  the
$p$-subharmonicity or $p$-superharmonicity of a specific quasiradial
test function, depending on the relation between the parameters $N$,
$k$ and $p$. Finally, In section 4 we provide background on ODE shooting methods and give full details of the uniqueness part of the proof  of Theorem
\ref{existenceanduniqueness}.  We have included the full details of the existence proof in the appendix.

\subsection{Remarks}
\begin{enumerate}
\item Theorem \ref{bounds} is a  direct consequence of Theorems \ref{upperbound}, \ref{lowerbound}  and
\ref{explicitestimates}  below.

\item To keep the exposition as simplest as possible we   only
consider  the case of a half-space,  but the same
techniques could be also adapted to cones.
The estimates given by Theorem \ref{bounds}  extend  those of
\cite{deb2} in the case of a half-space.

\item All of our  upper estimates in Theorem \ref{bounds} also
 hold  for spherical caps of the unit ball. However, the
lower estimates in the ball do not directly follow from our method.

\end{enumerate}

\

The research included in this paper originated when the first author
was visiting the departments of Mathematics at the University of
Illinois at Urbana-Champaign and at the University of Pittsburgh. He
wishes to thank both institutions for their support.

\section{Estimating $p$-harmonic measures with non-negative singular quasiradial $p$-super and $p$-subharmonic functions}

The aim of this section is to show that the existence of quasiradial
$p$-superharmonic (resp. $p$-subharmonic) functions $u = r^{k}
f(\theta ) $ in $\R^N_{+}$, where $k<0$ and $f$ satisfies
(\ref{f1}), (\ref{f2}), (\ref{f3}), implies  local upper bounds
(resp. lower bounds) of $p$-harmonic measure on $\partial \Omega$,
provided that $\Omega$ satisfies certain geometrical restrictions.

\subsection{Upper Bound}\label{subsec:upper}

\begin{theo}\label{upperbound}
Suppose that there exist $k<0$ and $f :[0, \pi /2] \to \R $
satisfying (\ref{f1}), (\ref{f2}), (\ref{f3}) such that the
quasiradial function $r^k f(\theta )$ is $p$-superharmonic in
$\R^N_{+}$. Let $\Omega \subset \R^N$ be a convex domain and $x_0
\in \Omega$. Then there exists $C = C(k,f) >0 $ such that for each
$\xi \in
\partial \Omega$ and any $ \delta > 0 $, we have

\begin{eqnarray}
\omega \big( B(\xi , \delta ) \cap \partial \Omega , x_0 , \Omega
\big ) \leq C \, \Big (\frac{\delta}{|x_0 -\xi |} \Big
)^{|k|}.\label{upper}
\end{eqnarray}
\end{theo}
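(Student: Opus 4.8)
The plan is to prove the upper bound by a comparison argument, using the given $p$-superharmonic function $u = r^k f(\theta)$ as a barrier. First I would normalize: after a translation we may assume $\xi = 0 \in \partial\Omega$, so that $B(\xi,\delta) = B(0,\delta)$. Since $\Omega$ is convex and $0 \in \partial\Omega$, there is a supporting hyperplane $H$ of $\Omega$ at $0$; let $\nu$ be the inward unit normal to $H$, so that $\Omega$ lies in the open half-space $\{x : \langle x,\nu\rangle > 0\}$. After a rotation I may take $\nu = e_N$, so $\Omega \subset \R^N_+$ and $0 \in \partial \R^N_+ = \R^{N-1}$. Write $x$ in the spherical coordinates $(r,\theta)$ centered at $0$ with $\theta$ the angle to the positive $x_N$-axis, exactly as in the definition of $u$.

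Next I would build the comparison function. The idea is that $v(x) = A\,\delta^{|k|}\, r^k f(\theta)$ is $p$-superharmonic in $\R^N_+$ (hence in $\Omega$), dominates $\chi_{B(0,\delta)\cap\partial\Omega}$ on $\partial\Omega$ in the appropriate sense, and is therefore an admissible competitor in the definition of $\omega_p(\cdot, x_0, \Omega)$ as an infimum over $p$-superharmonic supersolutions. On $\partial\Omega$ there are two regimes. For boundary points $x \in \partial\Omega$ with $|x| \le \delta$, we need $v(x) \ge 1$; but for such $x$ we have $r = |x| \le \delta$ and, since $x$ lies in the closed half-space, $\theta \in [0,\pi/2]$, so $f(\theta) \ge f(\pi/2)$ which can be $0$. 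The standard fix is to use instead the function $r^k f(\theta)$ on a \emph{slightly larger} cone or to translate the barrier's apex slightly below $\partial\R^N_+$; the cleanest route here is: because $u$ is $p$-superharmonic and positive on the open half-space with $u \to +\infty$ as $x \to 0$ nontangentially, a shifted/scaled copy $v(x) = A \delta^{|k|} |x - t e_N|^k f(\theta_t)$ for small $t<0$ (apex just outside $\Omega$) is still $p$-superharmonic in $\Omega$, is bounded below by $1$ on $B(0,\delta)\cap\partial\Omega$ once $A$ is chosen large enough depending only on $k$ and $f$, and is $\ge 0$ everywhere on $\partial\Omega$; then let $t \to 0^-$. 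For the second regime, on $\partial\Omega \setminus B(0,\delta)$ we only need $v \ge 0$, which is automatic since $r^k > 0$ and $f \ge 0$.

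Then I would invoke the comparison principle for the $p$-Laplacian: since $v$ is a $p$-superharmonic function in $\Omega$ with $v \ge \chi_{B(0,\delta)\cap\partial\Omega}$ on $\partial\Omega$ (in the sense of boundary limits, using that $v$ is lower semicontinuous and $\ge 0$), and since $\omega_p(B(0,\delta)\cap\partial\Omega,\cdot,\Omega)$ is by definition the infimum over exactly such supersolutions, we get
\begin{equation*}
\omega_p\big(B(0,\delta)\cap\partial\Omega,\, x_0,\, \Omega\big) \le v(x_0) = A\,\delta^{|k|}\, |x_0|^k f(\theta_0) \le A\, f(0)\, \Big(\frac{\delta}{|x_0|}\Big)^{|k|},
\end{equation*}
where $\theta_0$ is the angle of $x_0$ and we used $f(\theta_0) \le f(0) = 1$ and $|x_0| = |x_0 - \xi|$. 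Setting $C = A$ (which depends only on $k$ and $f$) gives \eqref{upper}. I would remark that one must check the definition of $p$-harmonic measure used (the Perron--Wiener--Brelot style upper class from \cite{HKM}) indeed consists of $p$-superharmonic functions bounded below by the indicator on the boundary, so that the barrier is genuinely admissible; this is where care is needed but no real difficulty arises.

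The main obstacle is the boundary behavior at the "seam" near $\partial B(0,\delta) \cap \partial\Omega$ and, more delicately, the degeneracy $f(\pi/2) = 0$: a barrier with apex exactly on $\partial\R^N_+$ vanishes on all of $\partial\R^N_+$ and so cannot dominate the indicator of $B(0,\delta)\cap\partial\Omega$ there. Handling this cleanly — either by the apex-shift limiting argument sketched above, or by comparing on $\Omega \cap B(0,R)$ for large $R$ and letting the barrier blow up near $0$ — is the one technical point that requires attention; everything else is a routine application of convexity (supporting hyperplane), monotonicity of $f$, and the $p$-Laplacian comparison principle.
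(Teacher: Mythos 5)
Your overall strategy (supporting hyperplane from convexity, the singular quasiradial function as a barrier, comparison principle, evaluate at $x_0$) is the same as the paper's, and you correctly identify the one real difficulty: since $f(\pi/2)=0$, a barrier whose apex sits on the supporting hyperplane vanishes on that hyperplane and cannot dominate the indicator of $B(\xi,\delta)\cap\partial\Omega$. But your proposed fix does not work as stated. If you put the apex at $te_N$ with $t<0$ and let $t\to 0^-$, the constant $A$ cannot be chosen depending only on $k$ and $f$: take a boundary point $x=(x',0)$ with $|x'|=\delta$ lying on the supporting hyperplane (convexity allows $\partial\Omega$ to contain such points). Then $\cos\theta_t=|t|/\sqrt{\delta^2+t^2}\to 0$, so $\theta_t\to\pi/2$ and $f(\theta_t)\approx |f'(\pi/2)|\,|t|/\delta\to 0$; hence $v(x)\approx A\,|f'(\pi/2)|\,|t|/\delta$, and forcing $v(x)\ge 1$ requires $A\gtrsim \delta/|t|\to\infty$. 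The alternative you mention (working on $\Omega\cap B(0,R)$ and letting the barrier blow up near $0$) does not help either, since the failure occurs at boundary points at distance comparable to $\delta$ from $\xi$, where the barrier is not large.

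The missing idea — and the heart of the paper's proof — is to displace the apex by a distance \emph{comparable to} $\delta$ rather than infinitesimally: after normalizing $|x_0-\xi|=1$ and $\delta<1$, the paper places the apex at the origin and uses convexity to arrange $\xi=(0,\dots,0,2\delta)$ with $\Omega\subset\{x_N>2\delta\}$. Then every $x\in B(\xi,\delta)\cap\partial\Omega$ satisfies $|x'|\le\delta$ and $x_N\ge 2\delta$, so $r\le 3\delta$ and $\tan\theta\le 1/2$, i.e.\ $\theta\le\arctan(1/2)<\pi/6$; consequently $f(\theta)\ge f(\pi/6)>0$ uniformly, and the fixed constant $C=3^{|k|}/f(\pi/6)$ (depending only on $k$ and $f$) makes the barrier $C\delta^{|k|}r^kf(\theta)$ exceed $1$ on $B(\xi,\delta)\cap\partial\Omega$. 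With that replacement your argument goes through; the rest of your write-up (positivity of the barrier on $\partial\Omega\setminus B(\xi,\delta)$, the comparison principle from \cite{HKM}, and the evaluation at $x_0$) matches the paper.
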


\begin{proof}
From the fact that  $0 \leq \omega \leq 1$ and the invariance of
$p$-harmonic measure by rescaling,  we can assume that $0 < \delta <
|x_0 - \xi| = 1$. Finally, from the convexity assumption on $\Omega$
and the rotational invariance of $p$-harmonic measure, we can also
assume that $\xi = (0,\cdots 0, 2\delta ) $, and that $\Omega
\subset \{(x', x_N )\in \R^N : x_N > 2\delta \} $. We claim that
\begin{eqnarray}
\omega \big( B(\xi , \delta ) \cap \partial \Omega , x_0 , \Omega
\big )  \leq C \, \delta^{|k|} \label{uppersimp}
\end{eqnarray}
for some $C = C(k, f) >0$.

\

We put  $u(x) = C \delta^{|k|}r^{k}f(\theta )$ where $r= |x|$ ,
$\theta$ is the azimuth angle, as introduced at the beginning of the
section, and the constant $C$ will be chosen later. From the
hypothesis, $u$ is $p$-superharmonic in $\R^N_{+}$. We put $\omega
(x)
 \equiv \omega \big( B(\xi , \delta ) \cap \partial \Omega , x , \Omega
\big ) $. We will show that
\begin{eqnarray}
\omega \leq u \, \, \textnormal{on} \, \,  \partial \Omega.
\label{compar}
\end{eqnarray}
Observe that $\omega$ is $p$-harmonic in $\Omega$ and $u$ is
$p$-superharmonic in $\Omega$. Then it  follows from (\ref{compar})
and  the Comparison Principle (\cite{HKM}, Thm. 7.6) that $\omega
\leq u$ in $\Omega$. Therefore
$$
\omega (x_0 ) \leq u(x_0 ) \leq C\delta^{|k|} (1 + 2\delta)^k
\max_{[0, \pi /2]} f \leq  C\delta^{|k|} \, ,
$$
and hence (\ref{uppersimp}); this completes the proof of the
theorem. It remains to show that an appropriate choice of $C$
implies (\ref{compar}). We show next that $\displaystyle C = 3^{|k|}
\big ( f(\pi /6) \big)^{-1}$ works. Since $\Omega \subset \R^N_{+}$,
$\omega \equiv 0 $ on $\partial \Omega \setminus B(\xi , \delta )$
and $u$ is positive, we only need to check (\ref{compar}) on $B(\xi
, \delta ) \cap
\partial \Omega $. Now let $x \in B(\xi , \delta ) \cap \partial
\Omega$. If $r= |x|$ and $\theta$ is the azimuth angle of $x$,
elementary geometry gives that  $r\leq 3\delta$ and $0 \leq \theta
\leq \pi /6 $. By the definition of $u$ and the fact that $f$ is
decreasing we have
$$
u(x) \geq  C\, \delta^{|k|} (3\delta)^{k}f(\pi /6) =  3^k f(\pi /6) \, C,
$$
so the choice $\displaystyle C = 3^{|k|} \big ( f(\pi /6)
\big)^{-1}$ implies that $u(x) \geq 1 = \omega (x) $,  and
(\ref{compar}) follows. This finishes the proof of the theorem.
\end{proof}

\subsection{Lower Bound}\label{subsec:lower}

In this subsection we will restrict our attention to the case where
the domain is the half-space $\R^N_{+}$.

\begin{theo}\label{lowerbound}
Suppose that there exist $k<0$ and $f :[0, \pi /2] \to \R $
satisfying (\ref{f1}), (\ref{f2}), (\ref{f3}) such that the
quasiradial function $r^k f(\theta )$ is $p$-subharmonic in
$\R^N_{+}$. Then there exists a constant $C = C(N,p)>0$ such that
for any ball $B\subset \R^{N-1}$ of radius $0 < \delta \leq 1$ we
have
\begin{eqnarray}
\omega (B, x_B , \R^{N}_{+} ) \geq C \delta^{|k|}, \label{lower}
\end{eqnarray}
where $x_B = (a_B , 1)\in \R^{N}_+$ and $a_B$ is the center of $B$.
\end{theo}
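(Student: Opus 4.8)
The plan is to mirror the comparison argument used for the upper bound, but now pushing the quasiradial $p$-subharmonic function $r^k f(\theta)$ \emph{up against} the $p$-harmonic measure from below. By the scaling and translation invariance of the $p$-harmonic equation and of $p$-harmonic measure, I would first reduce to the case where $B$ is the ball $B_\delta\subset\R^{N-1}$ centered at the origin and the base point is $x_B=(0,\dots,0,1)$; the general center $a_B$ is recovered by a horizontal translation, which affects neither the equation nor the measure. Write $\omega(x)=\omega(B_\delta,x,\R^N_+)$, which is $p$-harmonic in $\R^N_+$, equal to $1$ on $B_\delta$ and $0$ on $\R^{N-1}\setminus B_\delta$.

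Next I would build the subsolution barrier. Set $v(x)=c\,\delta^{|k|} r^{k} f(\theta)$ where $r=|x|$, $\theta$ is the azimuth angle measured from the positive $x_N$-axis, and $c=c(N,p)>0$ is a small constant to be fixed. By hypothesis $v$ is $p$-subharmonic in $\R^N_+$, and since $k<0$ and $f\ge 0$ it is nonnegative. The key claim is that $v\le\omega$ on $\partial\R^N_+=\R^{N-1}$ for a suitable choice of $c$. On $\R^{N-1}\setminus B_\delta$ this is automatic since $\omega=0$ there while $v$, having $\theta=\pi/2$ and $f(\pi/2)=0$, also vanishes on $\R^{N-1}$ away from the origin — here one must be slightly careful that $v$ extends continuously by $0$ to $\R^{N-1}\setminus\{0\}$, which follows from $r^k$ being bounded away from the origin and $f(\pi/2)=0$. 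On $B_\delta$ we need $v\le 1$: for $x\in B_\delta$ one has $\theta=\pi/2$, so $f(\theta)=0$ and $v(x)=0\le 1=\omega(x)$. Thus $v\le\omega$ on the whole boundary for \emph{any} $c>0$. Actually the subtlety is that $v$ vanishes on all of $\R^{N-1}$, so this boundary comparison alone is too weak — the point where $v$ is genuinely large is \emph{above} the origin, inside $\R^N_+$, not on the boundary. So the comparison must instead be carried out in the subdomain $\R^N_+\setminus \overline{B(0,c')}$ or, more cleanly, I would compare on the domain $\R^N_+$ directly using that $v$ is $p$-subharmonic and $\omega$ is $p$-harmonic with $\omega\ge v$ on the boundary together with the correct behavior at infinity ($v\to 0$ as $r\to\infty$ since $k<0$, and $\omega\ge 0$), invoking the comparison principle (\cite{HKM}, Thm. 7.6) on the exterior-of-a-small-ball region and handling the small ball around the origin separately, where $\omega$ is bounded below by a Harnack-type constant because it equals $1$ on $B_\delta$ and $0<\delta\le 1$ so the "on" set is not degenerating relative to the fixed unit scale when $\delta$ is bounded below — and when $\delta$ is small one uses that $\omega\ge$ some fixed positive constant on a fixed small half-ball above $B_\delta$ by the boundary behavior of $p$-harmonic measure of a nondegenerate portion. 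Concretely: choose $c$ so small that $c\,\delta^{|k|}r^k f(\theta)\le \omega(x)$ holds on $\partial\big(\R^N_+\cap\{r>\delta\}\big)$, i.e. on the hemisphere $\{r=\delta,\ x_N>0\}$ (where one needs $\omega\ge$ const, obtained from Harnack and the fact that $\omega=1$ on $B_\delta=\{r\le\delta,\ x_N=0\}$, a comparable-size piece of the boundary) and on $\{r>\delta,\ x_N=0\}$ (where $v=0\le\omega$). Then the comparison principle on $\R^N_+\cap\{r>\delta\}$ gives $v\le\omega$ throughout, and evaluating at $x_B=(0,\dots,0,1)$, where $r=1$ and $\theta=0$ so $f(\theta)=f(0)=1$, yields $\omega(B_\delta,x_B,\R^N_+)\ge v(x_B)=c\,\delta^{|k|}$, which is the desired estimate.

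The main obstacle I anticipate is precisely the boundary comparison on the inner hemisphere $\{r=\delta,\ x_N>0\}$: one needs a lower bound $\omega\ge \gamma>0$ there with $\gamma$ independent of $\delta$, so that absorbing the factor $\delta^{|k|}$ into $c$ works uniformly. This should follow from a Carleson-type or boundary Harnack estimate for $p$-harmonic measure of a ball, but to keep the section self-contained one can instead compare $\omega$ from below on the \emph{smaller} half-ball $\{r<\delta,\ x_N>0\}$ with an explicit radial subsolution or with the $p$-capacitary function of the configuration $(B_\delta,\partial B(0,\delta)\cap\R^N_+)$, whose value is scale-invariant hence a dimensional constant. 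I would also need to double-check the degenerate case where $B_\delta$ shrinks, ensuring the constant $C(N,p)$ does not deteriorate; the scaling reduction handles this since the whole configuration $(B_\delta,x_B)$ at scale $\delta$ is, after dilation by $1/\delta$, the configuration $(B_1,(0,\dots,0,1/\delta))$, and then the exponent $\delta^{|k|}$ emerges exactly from the homogeneity $r^k$ of the barrier. A final routine point: one must verify $v$ is admissible as a comparison function in the sense of \cite{HKM}, i.e.\ that it is continuous up to the boundary of the region $\R^N_+\cap\{r>\delta\}$, which holds because $r$ is bounded below by $\delta$ there and $f\in C^2[0,\pi/2]$ by (\ref{f2}).
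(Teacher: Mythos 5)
Your proposal is correct and follows essentially the same route as the paper's proof: excise a small upper half-ball around the origin, use a Carleson-type estimate to bound $\omega$ from below by a constant $\gamma(N,p)$ on the hemispherical part of the new boundary, and then run the comparison principle against the rescaled subsolution $c\,\delta^{|k|}r^k f(\theta)$ in the exterior region before evaluating at $x_B$. The paper uses the half-ball of radius $\delta/2$ and cites the Carleson estimate of Aikawa--Kilpel\"ainen--Shanmugalingam--Zhong for the hemisphere bound, but these are the same choices you arrive at after your (correctly diagnosed) detour about $v$ vanishing on all of $\R^{N-1}$.
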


\begin{proof}
We can assume that $B$ is centered at $0$. Denote by $\omega (.)$
the $p$-harmonic measure of $B$ in $\R^{N}_+$. Let $\widetilde{B}$
be the upper half ball in $\R^{N}$ centered at $0$ and of radius
$\delta /2$. Then it follows from the Carleson estimates for
$p$-harmonic measure (\cite{AHSZ}) that there exists $C = C(N,p)>0 $
such that
\begin{eqnarray}
\omega (B, x , \R^{N}_+ ) \geq C \label{carleson}
\end{eqnarray}
whenever $x\in \partial \widetilde{B}$. Now let $\Omega = \R^{N}_+
\setminus \widetilde{B}$. Then $\partial \Omega = F_1 \cup F_2$
where $F_1 = \partial \widetilde{B} \cap \R^{N}_+$ and $F_2 =
\R^{N-1} \setminus \frac{1}{2}B$. (If $t>0$,  $tB$ stands for the
ball concentric to $B$ of radius $t$ times the radius of $B$).

\

We define now $v = C \delta^{|k|} r^k f(\theta)$, which is assumed
to be $p$-subharmonic in $\R^{N}_+$ by the hypothesis. We claim that
$v \leq \omega $ in $\Omega$. Indeed, if $x\in F_1$ then, from
(\ref{carleson}) and the fact that $f\leq 1$, we have
$$
v(x) \leq C \delta^{|k|} \delta^k = \lambda \leq \omega (x).
$$
If $x\in F_2 $ then $v(x) = 0 \leq \omega(x)$. Then by the
Comparison Principle we get $v \leq \omega$ in $\Omega$. Now,
evaluating at $x_0 = (0,\cdots, 0,1)$, we obtain
$$
v(x_0 ) = C \delta^{|k|} 1^k f(0) = \lambda \delta^{|k|} \leq \omega
(B , x_0 , \R^{N}_+ ) \, ,
$$
and (\ref{lower}) follows.
\end{proof}

\section{Explicit choices of  $p$-subharmonic and $p$-superharmonic singular quasiradial functions }
We note that for a  quasiradial function of the form $u = r^k f(
\theta ) $ in $\R^{N}_+$ then the sign of $\triangle_p u$ coincides
with the sign of the differential expression

\begin{eqnarray}
[(p-1)(f')^2 +k^2 f^2]f''  + k[(2p-3)k +N-p]f(f')^2 + \nonumber \\
k^3[k(p-1) +N-p]f^3  + (N-2)[(f')^2 +k^2 f^2]f' \cot \theta .
\label{sign}
\end{eqnarray}

The aim of this section is to seek intervals of the parameter $k<0$
where (\ref{sign}) has a definite sign in the interval $[0, \pi /2]$
and therefore the corresponding quasiradial function $r^k f(\theta
)$ is $p$-super or $p$-subharmonic in $\R^N_+$. We will show that
the choice $f(\theta ) = \cos \theta$ produces  specific intervals
where the sign of (\ref{sign}) in $[0, \pi /2]$ is constant.

\

It is convenient to set $g = \log f$. Then $f' = g'e^g$ and $f'' =
[g'' +(g')^2]e^g$. After cancellation of $e^{3g}$, (\ref{sign}) is
transformed into

\begin{eqnarray}
[(p-1)(g')^2 +k^2][g'' +(g')^2] + k [(2p-3)k +N-p](g')^2 + \nonumber \\
k^3[k(p-1) +N-p] +(N-2)[(g')^2 +k^2]g' \cot \theta.
\label{signsimple}
\end{eqnarray}

Now the choice of $f= \cos \theta$ gives $g = \log ( \cos \theta )$,
$g' = -\tan \theta$ and $g'' = -1-\tan^2 \theta$. Making these
substitutions  in (\ref{signsimple}) we obtain

\begin{eqnarray}
[(p-1)\tan^2 \theta +k^2](-1) +k[(2p-3)k +N-p] \tan^2 \theta + \nonumber \\
k^3[k(p-1) +N-p] + (N-2)(\tan^2 \theta + k^2 )(-1) \, ,
\label{signsimple2}
\end{eqnarray}
and, finally, setting $ t = \tan^2 \theta$ we can write
(\ref{signsimple2}) as
\begin{eqnarray}
\Lambda (k, N, p)(t) \equiv \alpha t + \beta k^2
\end{eqnarray}
where
\begin{eqnarray}
\alpha   = &(2p-3)k^2 +(N-p)k +3-N-p \vspace{0.2cm}\\
\beta  = &  (p-1)k^2 +(N-p)k + 1-N
\end{eqnarray}
and $t\geq 0$. We now study the intervals in which the  sign of
$\Lambda$ is constant in $[0, +\infty)$, in terms of $N$, $k$ and
$p$. It is clear that
\begin{eqnarray*}
\Lambda \geq 0 \, \, \text{in} \, \, [0, +\infty)  & \Leftrightarrow &
\, \alpha \geq 0, \, \beta \geq 0  \, \,\,\,\,\,\text{ and that }   \vspace{0.2cm}\\
\Lambda \leq 0 \, \, \text{in} \, \, [0, +\infty)  & \Leftrightarrow &
\, \alpha \leq 0 , \, \beta \leq 0.
\end{eqnarray*}

As a function of the variable $k$,  $\beta (k)$  has zeros
$\displaystyle - \frac{N-1}{p-1}$ and $1$. As for the function
$\alpha (k)$, unless $p= 3/2$, its zeros are $\displaystyle -
\frac{p+N-3}{2p-3}$ and $1$. Now consider the graphs of the
decreasing functions
$$
f(p) = \frac{p+N-3}{2p-3} \, \, \text { and } \, g(p) = \frac{N-1}{p-1}.
$$
The graph of $f(p)$ has an asymptotic vertical line at $p=3/2$.
Furthermore, the graphs only intersect at $p=2$ and $p=N$. In
particular, $f<0$ in $(0, 3/2)$, $f>g$ in $(3/2 , 2)$, $f<g$ in $(2,
N)$ and $f>g$ in $(N, +\infty)$.

We collect all this information in the following cases.

\begin{enumerate}
\item $1< p \leq 3/2$. We have
 $ \displaystyle \alpha (k) \leq 0$ and  $ \beta (k) \leq 0$  in
$\big [- \frac{N-1}{p-1}, 0 \big ) $

\item $3/2 < p \leq 2$. We have
 $\displaystyle \frac{p+N-3}{2p-3} \geq \frac{N-1}{p-1} $. 
Thus,  we obtain
$\alpha (k), \beta (k) \geq 0$  in $\displaystyle \big( -\infty , -
\frac{p+N-3}{2p-3} \big ]$ and  $ \alpha(k), \beta (k)  \leq 0$ in
$\displaystyle \big[ -\frac{N-1}{p-1}, 0 \big )$.

\item $2\leq p \leq N$.
We have  $ \displaystyle \frac{p+N-3}{2p-3} \leq \frac{N-1}{p-1}$. Thus, we get
$\alpha (k), \beta (k)  \geq 0$ in  $\displaystyle \big ( -\infty ,
- \frac{N-1}{p-1}\big ]$ and $\alpha (k) , \beta (k) \leq 0$ in
$\displaystyle  \big [- \frac{p+N-3}{2p-3}, 0 \big )$.

\item $p\geq N$. Again we have $\displaystyle \frac{p+N-3}{2p-3} \geq \frac{N-1}{p-1}$.
Thus, we get $\alpha (k), \beta (k) \geq 0$ in $\displaystyle \big(-\infty , -
\frac{p+N-3}{2p-3} \big]$ and $ \alpha (k) , \beta (k) \leq 0$ in
$\displaystyle \big [- \frac{N-1}{p-1}, 0 \big )$.

\end{enumerate}

We have therefore proved the following theorem.

\begin{theo}\label{explicitestimates}
Let $k<0$ and consider the quasiradial function $u = r^k \cos
\theta$ in $ \R^{N}_+$.
\begin{enumerate}
\item If $\boldsymbol{1 < p \leq 3/2}$, then $u$ is $p$-superharmonic for $\displaystyle k\in \big [-
\frac{N-1}{p-1}, 0\big )$.
\item If $\boldsymbol{3/2 <p \leq 2}$, then $u$ is $p$-subharmonic for $\displaystyle k \in \big ( -\infty , - \frac{p+N-3}{2p-3}
\big]$ and $p$-superharmonic for $\displaystyle k \in \big
[-\frac{N-1}{p-1}, 0 \big )$.
\item If $\boldsymbol{2\leq p \leq N}$, then $u$ is $p$-subharmonic for $\displaystyle k \in \big ( -\infty , - \frac{N-1}{p-1}
\big]$ and $p$-superharmonic for $\displaystyle k \in \big [ -
\frac{p+N-3}{2p-3}, 0 \big )$.
\item If $\boldsymbol{p\geq N}$, then $u$ is $p$-subharmonic for $\displaystyle k \in \big ( -\infty , - \frac{p+N-3}{2p-3}
\big]$ and $p$-superharmonic for $\displaystyle k \in \big [ -
\frac{N-1}{p-1}, 0\big)$.
\end{enumerate}
\end{theo}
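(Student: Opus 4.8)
The plan is to read Theorem~\ref{explicitestimates} off the sign analysis of the differential expression (\ref{sign}) specialized to $f(\theta)=\cos\theta$, which is essentially the computation carried out in the paragraphs just before the statement; the proof is then the task of organizing that bookkeeping into the four cases. First I would recall the reduction already available: for a quasiradial function $u=r^kf(\theta)$ the sign of $\triangle_p u$ agrees with the sign of (\ref{sign}); writing $g=\log f$ and then putting $f=\cos\theta$ (so $g'=-\tan\theta$, $g''=-1-\tan^2\theta$) and $t=\tan^2\theta$ turns (\ref{sign}) into the affine function
\[
\Lambda(k,N,p)(t)=\alpha\,t+\beta\,k^2,\qquad
\alpha=(2p-3)k^2+(N-p)k+3-N-p,\quad \beta=(p-1)k^2+(N-p)k+1-N.
\]
The point I would stress is that the choice $f=\cos\theta$ is precisely what makes the singular term $(N-2)[(f')^2+k^2f^2]f'\cot\theta$ lose its $\cot\theta$ singularity, since $g'\cot\theta=-1$, so that the whole expression is polynomial — in fact affine — in $t\in[0,\infty)$. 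I would also record here that $\cos\theta$ trivially satisfies (\ref{f1})--(\ref{f3}), so the resulting functions are legitimate test functions for Theorems~\ref{upperbound} and \ref{lowerbound}.

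Second, since $t=\tan^2\theta$ sweeps out all of $[0,\infty)$ as $\theta$ runs over $[0,\pi/2)$ and $k^2>0$, an affine function $t\mapsto \alpha t+\beta k^2$ has constant sign on $[0,\infty)$ exactly when its slope $\alpha$ and its value $\beta k^2$ at $t=0$ share that sign; thus $\Lambda\ge 0$ on $[0,\infty)\iff \alpha\ge 0$ and $\beta\ge 0$, and $\Lambda\le 0$ on $[0,\infty)\iff \alpha\le 0$ and $\beta\le 0$. Translating back, $u=r^k\cos\theta$ is $p$-subharmonic in $\R^N_+$ when $\alpha\ge 0$ and $\beta\ge 0$, and $p$-superharmonic when $\alpha\le 0$ and $\beta\le 0$. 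This disposes of the conceptual part of the proof; what remains is to locate, for $k<0$, the set of $k$ on which the two quadratics $\alpha(k),\beta(k)$ are simultaneously $\ge 0$ (resp. $\le 0$).

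Third, I would factor the quadratics. For $\beta$ one has $\beta(k)=(p-1)(k-1)\bigl(k+\tfrac{N-1}{p-1}\bigr)$, with roots $-\tfrac{N-1}{p-1}<0<1$ and positive leading coefficient $p-1$, so for $k<0$: $\beta(k)\ge 0$ exactly on $\bigl(-\infty,-\tfrac{N-1}{p-1}\bigr]$ and $\beta(k)\le 0$ exactly on $\bigl[-\tfrac{N-1}{p-1},0\bigr)$. For $p\neq 3/2$, $\alpha(k)=(2p-3)(k-1)\bigl(k+\tfrac{p+N-3}{2p-3}\bigr)$, with roots $-\tfrac{p+N-3}{2p-3}$ and $1$; the leading coefficient $2p-3$ changes sign at $p=3/2$ (where $\alpha$ degenerates to an affine function of $k$), so I must track the orientation of the parabola, which determines whether $k<0$ lies inside or outside the root interval. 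Then I would compare the two cutoff functions $f(p)=\tfrac{p+N-3}{2p-3}$ and $g(p)=\tfrac{N-1}{p-1}$: both are decreasing on their domains, $f$ has a pole at $p=3/2$ and is negative on $(1,3/2)$, and the two graphs meet only at $p=2$ and $p=N$, so $f>g$ on $(3/2,2)$, $f<g$ on $(2,N)$, $f>g$ on $(N,\infty)$. Combining the root positions, the orientation of the parabola, and this ordering, a short case-by-case inspection over $1<p\le 3/2$, $3/2<p\le 2$, $2\le p\le N$, $p\ge N$ yields in each case exactly the intervals of $k<0$ on which $\alpha,\beta$ have a common sign, and these are precisely the intervals asserted in items (1)--(4).

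The argument is almost entirely routine algebra; the only genuinely delicate points are (i) the degeneration of $\alpha$ at $p=3/2$ together with the sign change of $2p-3$, which flips whether $k<0$ is ``inside'' or ``outside'' the roots of $\alpha$, and (ii) getting the comparison of $f(p)$ and $g(p)$ correct on each sub-interval so that the right root becomes the active cutoff. Once those are pinned down, the factorizations, the positions of the roots relative to $0$, and the final reading-off of the intervals are mechanical, so I would present them compactly rather than spelling out every inequality.
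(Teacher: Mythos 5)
Your proposal is correct and follows essentially the same route as the paper: the substitution $g=\log f$ with $f=\cos\theta$, the reduction to the affine function $\Lambda(k,N,p)(t)=\alpha t+\beta k^2$ on $t\in[0,\infty)$, the observation that constant sign of $\Lambda$ is equivalent to $\alpha$ and $\beta$ sharing that sign, the identification of the roots $-\tfrac{N-1}{p-1},1$ of $\beta$ and $-\tfrac{p+N-3}{2p-3},1$ of $\alpha$, and the comparison of the two cutoff functions meeting only at $p=2$ and $p=N$ are all exactly the paper's argument. The factorizations you give are consistent with the paper's stated roots, and the four-case bookkeeping matches items (1)--(4).
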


\

\noindent \textbf{Remark.} Assume, for simplicity, that $N\geq 3$
and let $u = r^{k}\cos \theta$ be as above. Even though we are
mainly interested in the case $k<0$, we also record the situation
for $k\geq 0$:
\begin{itemize}
\item If $1 < p < 3/2 $,  then $u$ is $p$-subharmonic for $ \displaystyle k \in [1,
\frac{p+N-3}{3-2p}]$ and $p$-superharmonic for $k\in [0,1]$.
\vspace{0.2cm}
\item If $p\geq 3/2$, then $u$ is $p$-subharmonic for $k\in [1,
+\infty)$ and $p$-superharmonic for $k\in [0,1]$.
\end{itemize}
Observe that the case $k=1$ corresponds to the function $u(x) =
x_N$, which  is $p$-harmonic for any $p>1$.

\

\section{ODE theorems: Proof of Theorem 1.2}
Topological shooting methods were developed for
the  important  model equation
   \begin{eqnarray}
u''+\frac{N-1}{r} u'+  f(u)=0, \label{deq:odddd58}\\
u(0)=u_0>0 \textrm{ and }   u'(0)=0.
 \label{deq:odddd59}
 \end{eqnarray}
Bound  state solutions satisfy
  \begin{eqnarray}
\lim_{r\to \infty}(u(r),u'(r))=(0,0).
 \label{eqdd:odddd}
 \end{eqnarray}
 A positive ground state is a solution  such that $u(r)>0$ for all $ r \ge 0.$
Hastings and McLeod~(\cite{ha},~pp. 5-6) present  a  standard two-step approach to prove that the  initial value problem
  is well posed:
  first, transform    (\ref{deq:odddd58})-(\ref{deq:odddd59})  into the   system
     \begin{eqnarray}
u(r)=u_{0}+\int_{0}^{r}v(\rho)d\rho,~~~~~~~~~~~~~
         \nonumber \vspace{0.1in} \\
 v(r)=-{\frac {1}{r^{N-1}}}
 \int_{0}^{r}\rho^{N-1}f(u(\rho))d \rho.
  \label{eq:oddd66dd3a2}
 \end{eqnarray}
 Second,  an application of    the method of successive approximations
   shows that  for each $u_{0}>0$   there is an interval $[0,r_{1})$ over which
 a unique solution of (\ref{eq:oddd66dd3a2}) exists, and  this solution   depends continuously on $u_{0}.$
   It
  also shows (see~\cite{codd})
      that   $ {\frac {\partial u}{\partial u_{0}}}$ is continuous.
      This  method applies in general (see \cite{codd,hart}), including
 to  problem (\ref{m1addaa})-(\ref{m1addddaa}) studied
 here. Therefore, problem (\ref{m1addaa})-(\ref{m1addddaa}) is well posed.
Since 1963 the investigation of  existence and uniqueness of bound
state  solutions when
  \begin{eqnarray}
 f(u)= |u^{p-1}|u-u, ~N>1~~~{\rm and}~~~1<p<{\frac {N+1}{N-1}}.
 \label{eqdddd:odddd}
 \end{eqnarray}
  has been the central focus of several studies.
 In 1963  Nehari~\cite{neh}   proved
 existence of a positive ground state solution when $p=3$.
 In 1972 Coffman~\cite{C}
  proved  uniquesss of the positive ground state when $p=3$.
In  1989  Kwong~\cite{K} proved uniqueness of the
positive ground state in the general case
 $N>2$ and $1<p<{\frac {N+2}{N-2}}.$
  The uniqueness proofs   in \cite{C,K}
  combine  the fact that
   $u(r,\alpha)$ is continuously differentiable function of $r$ and $\alpha$,
   together with a clever analysis of the equation satisfied    by
    ${\frac {\partial u(r,\alpha)}{\partial \alpha}}.$
In our study of uniqueness we use this same approach to  analyze
properties of ${\frac {\partial y(r,\alpha)}{\partial k}}.$
 Finally, we point out that    in 1990 McLeod, Troy and Weissler~\cite{M3}  let $N>1$ and $1<p<{\frac {N+1}{N-1}},$
  and used a  topological shooting approach to prove existence of sign changing
 bound state solutions. \par
According to (\ref{sign}) and (\ref{f3}) the  initial value problem
to consider is
 \begin{eqnarray}
       \left ( (p-1)(y')^{2}+k^{2}y^{2} \right ) y'' = \left ( (3-2p)k+p-N \right )ky(y')^{2}
 \nonumber \vspace{0.1in} \\
 +\left ( k(1-p)+p-N \right )k^{3}y^{3}  + (2-N) y'\cot(x)\left ( (y')^{2}+k^{2}y^{2} \right ),
 \label{m1addaa}
 \end{eqnarray}
  \begin{eqnarray}
y(0)=1~~{\rm and}~~y'(0)=0.
 \label{m1addddaa}
 \end{eqnarray}
 where we have changed the notation $x$ for $\theta$ and $y(x)$ for $f(\theta)$.
The goal is to prove that if    $N \ge 2$ and $p >1  $   there
exists $k(p,N)<0$ such that if $k=k(p,N)$ then the solution of
(\ref{m1addaa})-(\ref{m1addddaa}) satisfies
  \begin{eqnarray}
0<y(x)<1~{\rm and}~~ y'(x)<0\text{ for all }x \in \left ( 0,{\frac
{\pi}{2}}
  \right ),~~{\rm and} ~ \lim_{ x \to {\frac {\pi}{2}}^{-}}y(x)=0.
 \label{m3wdd4dddd}
\end{eqnarray}
  Because of the known cases (see Introduction) we always assume $N>2$. It suffices to
  consider two separate  regimes:
  {\bf (a)}   $1<p<N,$ and {\bf (b)}  $p\ge N.$
\begin{theo}  \label{th1a}
   {\bf (a)} Let     $1<p<N.$
     There exists a unique $k(p,n) \in \left ( -\infty,{\frac {N-p}{1-p}} \right ) $ such that if $k=k(p,n)$
     then the solution of (\ref{m1addaa})-(\ref{m1addddaa}) satisfies (\ref{m3wdd4dddd}).

 \smallskip \noindent   {\bf (b)} Let    $p\ge N.$
     There exists a unique $k(p,n) \in \left ( -\infty,0 \right ) $ such that if $k=k(p,n)$
     then the solution of  (\ref{m1addaa})-(\ref{m1addddaa}) satisfies (\ref{m3wdd4dddd}).

  \end{theo}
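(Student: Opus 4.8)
The plan is to prove the existence and uniqueness of $k(p,N)$ together, by a topological shooting argument in the parameter $k$, carried out in parallel for the two regimes \textbf{(a)} $1<p<N$ and \textbf{(b)} $p\ge N$, which differ only in the admissible interval of shooting parameters. The first step is to remove the apparent singularity of (\ref{m1addaa}) at $x=0$: since $y'(x)\cot x\to y''(0)$ as $x\to 0^{+}$, evaluating (\ref{m1addaa}) at the origin and using (\ref{m1addddaa}) gives
\[
y''(0)=\frac{k\bigl((1-p)k+p-N\bigr)}{N-1},
\]
so that $y''(0)<0$ exactly when $k$ lies in the set $J$, where $J=\bigl(-\infty,\tfrac{N-p}{1-p}\bigr)$ in case \textbf{(a)} and $J=(-\infty,0)$ in case \textbf{(b)} --- precisely the intervals in the statement. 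Using this together with the well-posedness and $C^{1}$-dependence of $y(\cdot,k)$ on $(x,k)$ recalled above, for each $k\in J$ the solution $y(\cdot,k)$ initially decreases strictly from $1$, so one may define
\[
\sigma(k)=\sup\Bigl\{\,x\in\bigl(0,\tfrac{\pi}{2}\bigr]\ :\ y(\cdot,k)>0\text{ and }y'(\cdot,k)<0\text{ on }(0,x)\,\Bigr\}.
\]

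For existence I would partition $J$ into the \emph{undershoot} set $A=\{k\in J:\sigma(k)<\tfrac{\pi}{2},\ y(\sigma(k),k)=0\}$ and the \emph{overshoot} set $\mathcal B=\{k\in J:\sigma(k)<\tfrac{\pi}{2},\ y'(\sigma(k),k)=0\}\cup\{k\in J:\sigma(k)=\tfrac{\pi}{2},\ y(\tfrac{\pi}{2},k)>0\}$. Continuous dependence makes $A$ and $\mathcal B$ open in $J$ (each is recognized by a transversal crossing of $0$ by $y$ or by $y'$ at an interior point, or by a strict sign at $x=\pi/2$, where $\cot x$ is regular), and they are disjoint. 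For nonemptiness: $y\equiv 1$ is the solution of (\ref{m1addaa})--(\ref{m1addddaa}) at the right endpoint of $J$ (there the coefficient $k(1-p)+p-N$ vanishes in case \textbf{(a)}, and $k^{2}$ vanishes in case \textbf{(b)}), so $y(\tfrac{\pi}{2},k)\to 1$ and $\mathcal B$ contains a left neighbourhood of $\sup J$; while for $k$ sufficiently negative the cubic term $\bigl(k(1-p)+p-N\bigr)k^{3}y^{3}$ in (\ref{m1addaa}) is large and negative, so $y''$ is strongly negative and $y$ reaches $0$ inside $(0,\tfrac{\pi}{2})$, placing $k$ in $A$ (on the range where $r^{k}\cos\theta$ is $p$-subharmonic by Theorem \ref{explicitestimates}, this can also be read off by comparing, in the first-order form (\ref{signsimple}) for $g'=y'/y$, with the subsolution $g'=-\tan x$, which yields $y\le\cos x$). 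Since $J$ is connected while $A$ and $\mathcal B$ are disjoint, nonempty and open, $J\setminus(A\cup\mathcal B)\ne\emptyset$, and for any $k^{\ast}$ in it the definition of $\sigma$ forces $0<y(\cdot,k^{\ast})<1$ and $y'(\cdot,k^{\ast})<0$ on $(0,\tfrac{\pi}{2})$ with $\lim_{x\to\pi/2^{-}}y(x,k^{\ast})=0$, which is (\ref{m3wdd4dddd}); a local analysis of (\ref{m1addaa}) as $x\to\pi/2$ shows moreover that the only admissible terminal behaviour is $y\sim c(\tfrac{\pi}{2}-x)$ with $c>0$, so that $f'(\pi/2)\in(-\infty,0)$ in accordance with (\ref{f3}).

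Uniqueness is the main obstacle, and I would handle it by the Kwong--Coffman method applied to the variational solution $w(x,k)=\partial y(x,k)/\partial k$. Differentiating (\ref{m1addaa})--(\ref{m1addddaa}) in $k$ yields a linear second-order equation for $w$ with $w(0)=w'(0)=0$, and differentiating the formula for $y''(0)$ gives $w''(0)=\bigl(2(1-p)k+p-N\bigr)/(N-1)$, which is strictly positive throughout $J$ in both regimes, so $w>0$ for small $x>0$. The crux --- this is precisely the ``clever analysis'' of Kwong~\cite{K} and Coffman~\cite{C} --- is to rule out a later zero of $w$: one constructs an auxiliary functional from $y,\,y',\,w,\,w'$, weighted by powers of $\cot x$ forced by the coefficient structure of (\ref{m1addaa}) (itself inherited from $\triangle_{p}$ of a quasiradial function), whose monotonicity, via a Sturm-type comparison with the equation satisfied by $y'$, shows that $w$ cannot vanish on $(0,\tfrac{\pi}{2}]$; in particular $w(\tfrac{\pi}{2},k^{\ast})>0$ at every critical parameter $k^{\ast}$. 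This transversality, with the fixed orientation it supplies, forces $A$ to be an initial and $\mathcal B$ a final subinterval of $J$, so that $J\setminus(A\cup\mathcal B)$ reduces to the single point $k(p,N)$. I expect the two genuinely delicate points to be: (i) making both comparisons rigorous --- the operator in (\ref{m1addaa}) is not in an evident maximum-principle or self-adjoint form, so one must pass to the scalar first-order equation for $g'=y'/y$ as in (\ref{signsimple}), or to a carefully chosen quadratic form; and (ii) controlling $w$ uniformly up to $x=\pi/2$, where the coefficients involve $\cot x$ while $y\to 0$, which requires the sharp rates $y\sim c(\tfrac{\pi}{2}-x)$ and $y'\to f'(\pi/2)<0$ supplied by the terminal analysis.
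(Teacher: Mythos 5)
Your existence argument is essentially the paper's: a topological shooting in the parameter $k$ over the same intervals $J$, with the same computation of $y''(0)$ fixing those intervals; the only structural difference is that you split $J$ into two disjoint open sets and invoke connectedness, whereas the paper takes $k(p,N)=\inf S_{1}$ (resp.\ $\inf S_{2}$) of a single ``good'' set and shows in Lemma~\ref{lem2axxf2} that the infimum can be neither an overshoot nor an undershoot. Two details you wave at would need the paper's quantitative substitutes: (i) openness of your set $\mathcal B$ at a parameter where $y'$ vanishes interiorly is not ``transversal'' for free --- the clean route is to show, as in Lemma~\ref{lem2axx}, that $y'$ (equivalently $H=y'/y$) can never turn around, by evaluating the equation at a putative critical point; and (ii) in case (b) your ``$y\equiv 1$ at the right endpoint'' continuity argument degenerates at $k=0$, where the leading coefficient $(p-1)(y')^{2}+k^{2}y^{2}$ vanishes at $x=0$; the paper instead integrates the Riccati inequality (\ref{m1adaddp22}) to get the uniform lower bound (\ref{m1ada5433}) for $k$ near the endpoint. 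These are fixable, and your undershoot estimate for $k\to-\infty$ is the right heuristic for Lemma~\ref{lem2axxf1}.

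The genuine gap is in uniqueness. You correctly identify the crux --- a fixed sign for the variation with respect to $k$ up to $x=\pi/2$ --- but you leave it as ``one constructs an auxiliary functional \dots whose monotonicity, via a Sturm-type comparison, shows that $w$ cannot vanish.'' That is precisely the hard Kwong--Coffman step for the second-order variational equation, and nothing in your proposal supplies the functional or the comparison; as written the proof does not close. The paper sidesteps this difficulty entirely: it differentiates not the second-order equation for $y$ but the \emph{first-order} equation (\ref{m1adadu}) for $U=H/k=y'/(ky)$. The resulting variational equation (\ref{m1adadu2}) for $W=\partial U/\partial k$ is linear first order with $W(0)=0$ and a forcing term $(1-p)(U^{2}+1)^{2}/((p-1)U^{2}+1)$ of one sign, so $W<0$ follows at once from the integrating-factor formula (\ref{m1ad66d}) --- no Sturm argument, no auxiliary functional. (A separate continuation lemma, Lemma~\ref{lem2axxf34dd}, is still needed to propagate $W<0$ over the whole parameter interval $[k_{1},k_{2}]$ up to $x=\pi/2$.) Your closing step is also underspecified: rather than arguing that $A$ and $\mathcal B$ are initial and final subintervals, the paper integrates $\frac{1}{k_{1}}\frac{y_{1}'}{y_{1}}>\frac{1}{k_{2}}\frac{y_{2}'}{y_{2}}$ to obtain $y_{1}\le y_{2}^{k_{1}/k_{2}}$ on $[0,\pi/2)$ and contradicts this near $\pi/2$ using $k_{1}/k_{2}>1$ and the finiteness and negativity of $y'(\pi/2)$ from Lemma~\ref{lem2axxf29} (a fact you do anticipate). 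I would recommend replacing your Step on $w=\partial y/\partial k$ with the first-order reduction: that is where the proof actually becomes elementary.
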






 Our first step in the proof of Theorem~\ref{th1a} is to  follow Deblassie and Smits~\cite{deb}, and set
$
 y={\rm exp} \left( \int_{0}^{x}H(t)dt \right ).
$  Then, we have $y(0)=1$,
and substitution  into
 (\ref{m1addaa}) gives
  the first order equation
 \begin{eqnarray}
       \left ( (p-1)H^{2}+k^{2} \right ) \left (H'+H^{2} \right ) = \left ( (3-2p)k+p-N \right )kH^{2}
 \nonumber \vspace{0.1in} \\
 +\left ( k(1-p)+p-N \right )k^{3}  + (2-N) H \cot(x)\left ( H^{2}+k^{2} \right ),
 \label{m1add}
 \end{eqnarray}
Differentiating and using $y'(0)=0$ we get $H(0)=0$. It now follows from
that property (\ref{m3wdd4dddd})
 holds if $H(x)$ also satisfies
   \begin{eqnarray}
 -\infty< H(x) <0~~{\rm and}~~ H'(x)<0\text{ for all } x \in \left ( 0,{\frac {\pi}{2}}
  \right ),~~{\rm and} ~ \lim_{ x \to {\frac {\pi}{2}}^{-}}H(x)=-\infty.
 \label{m3w4}
 \end{eqnarray}
Thus,    Theorem~\ref{th1a} follows from
 \begin{theo}  \label{th1}
   {\bf (a)} Let   $1<p<N.$
     There exists a unique $k(p,n) \in \left ( -\infty,{\frac {N-p}{1-p}} \right ) $ such that if $k=k(p,n)$
     then the solution of (\ref{m1add}) with $H(0)=0$ satisfies (\ref{m3w4}).

 \smallskip \noindent   {\bf (b)} Let    $p\ge N.$
     There exists a unique $k(p,n) \in \left ( -\infty,0 \right ) $ such that if $k=k(p,n)$
     then the solution of (\ref{m1add}) with $H(0)=0$ satisfies (\ref{m3w4}).

  \end{theo}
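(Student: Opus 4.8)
The plan is to run a topological shooting in the parameter $k$ for the first-order equation \eqref{m1add} with $H(0)=0$, and then to deduce uniqueness by analysing $\partial H/\partial k$ in the spirit of Kwong and Coffman. Here $k$ ranges over the open interval $I$, equal to $\bigl(-\infty,\tfrac{N-p}{1-p}\bigr)$ in case {\bf (a)} and to $(-\infty,0)$ in case {\bf (b)}, and $N>2$ (the cases $N=2$ being already known). First I would record the local theory near $x=0$: since $H\cot x$ must stay finite, the solution has the expansion $H(x)=c(k)\,x+o(x)$ with $c(k)=\bigl((1-p)k^{2}+(p-N)k\bigr)/(N-1)$, and one checks $c(k)<0$ exactly when $k\in I$; the successive-approximation argument on the integral form recalled in the text then gives, for each $k\in I$, a unique solution on a maximal interval, jointly $C^{1}$ in $(x,k)$. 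In particular $H<0$ and $H'<0$ for small $x>0$.

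For existence I would shoot on $\tau(k):=\sup\{x\in(0,\pi):H(\cdot;k)\text{ is finite on }[0,x)\}$, the abscissa at which $H(\cdot;k)$ escapes to $-\infty$. Writing \eqref{m1add} as $H'=\Phi(x,H,k)$, one has $\Phi_{x}<0$ wherever $H<0$ (the only $x$-dependence sits in a $\cot x$ term whose coefficient is positive when $H<0$, and $\cot x$ is decreasing); hence at a hypothetical first zero of $H'$ one would get $H''=\Phi_{x}<0$, a strict local maximum, impossible since $H$ decreases from $H(0)=0$. Thus $H$ is automatically strictly decreasing and negative on $(0,\tau(k))$. The algebraic key is that for $k\in I$ the equation $\Phi(\pi/2,H,k)=0$ has no real root: it is a quadratic in $H^{2}$ with discriminant the perfect square $(N-p)^{2}k^{2}$, whose only candidate positive root $H^{2}=|k|\bigl(\tfrac{N-p}{p-1}-|k|\bigr)$ is negative throughout $I$, and since the leading coefficient is $p-1>0$ this gives $\Phi(\pi/2,\cdot,k)<0$ everywhere; consequently $\tau(k)=\pi/2$ can occur only through $H\to-\infty$ as $x\to\pi/2^{-}$, i.e.\ $\tau(k)=\pi/2$ is precisely condition \eqref{m3w4}. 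Now $\tau$ is continuous on $I$ (by continuous dependence on $k$ together with the fact that once $|H|$ is large at a point $<\pi$ then \eqref{m1add} forces $H'\le-\tfrac12H^{2}$, hence blow-up within a controlled time), while $\tau(k)\to0$ as $k\to-\infty$ (there $c(k)=H'(0)\to-\infty$, $|H|$ grows on a short initial interval and the $-H^{2}$ term takes over) and $\tau(k)>\pi/2$ for $k$ near $\sup I$: in case {\bf (a)}, at $k=\tfrac{N-p}{1-p}$ the solution is $H\equiv0$ and the linearization there, $w:=\partial_{k}H$, solves $w'+(N-2)\cot x\,w=N-p$, giving $w(x)=(N-p)(\sin x)^{-(N-2)}\int_{0}^{x}(\sin s)^{N-2}\,ds>0$ and $H(\cdot;k)\approx\bigl(k-\tfrac{N-p}{1-p}\bigr)w$ small, negative, still strictly decreasing at $\pi/2$; in case {\bf (b)} the $p$-superharmonicity of $r^{k}\cos\theta$ on $\bigl[-\tfrac{N-1}{p-1},0\bigr)$ from Theorem \ref{explicitestimates} keeps $y=\exp\int H$ positive up to $\pi/2$. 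By the intermediate value theorem some $k\in I$ has $\tau(k)=\pi/2$; the complete argument for this part belongs in an appendix.

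For uniqueness I would suppose, toward a contradiction, that $k_{1}<k_{2}$ in $I$ both have $\tau(k)=\pi/2$. Differentiating \eqref{m1add} in $k$, the function $w=\partial_{k}H$ solves a linear first-order equation $\bigl((p-1)H^{2}+k^{2}\bigr)w'=P(x)w+Q(x)$ with $w(0)=0$ and $w'(0)=c'(k)=\bigl(2(1-p)k-(N-p)\bigr)/(N-1)$, positive on $I$, so $w>0$ near $x=0$. Following the method of Kwong and Coffman, the plan is to control the zeros of $w$ on $(0,\pi/2)$ by a Sturm-type comparison: one constructs an explicit second solution $z$ of an associated linear equation --- the role played in the ground-state problem by the dilation-generated solution, which here must instead be built by hand from the algebraic form of the coefficients of \eqref{m1add}, since that equation has no dilation symmetry --- and tracks the sign of the Wronskian-type quantity $w'z-wz'$ along the critical trajectory, with the aid of a Pohozaev/Rellich-type integral identity. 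The goal is to show $w$ keeps one sign on $(0,\pi/2)$, whence $k\mapsto\tau(k)$ is strictly monotone near each critical value and $\tau(k)=\pi/2$ at exactly one $k$; together with the existence part this gives Theorem \ref{th1}.

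The decisive obstacle is this last step. Along a critical trajectory $H\to-\infty$ as $x\to\pi/2^{-}$, so the coefficients $P$, $Q$ and $\bigl((p-1)H^{2}+k^{2}\bigr)^{-1}$ in the equation for $w$ degenerate and couple to $\cot x\to0$; both $w$ and the comparison function $z$ then become singular at the endpoint, and keeping the Sturm/Wronskian bookkeeping meaningful through that limit --- together with first identifying the correct $z$ and the correct integral identity in the absence of any symmetry --- is the heart of the argument and the point at which the Kwong--Coffman ideas must be genuinely adapted. A minor additional technicality is the degeneracy of the linearization at the endpoint $k=0$ in case {\bf (b)}, where $(p-1)H^{2}+k^{2}$ vanishes, so the direct perturbation used in case {\bf (a)} is unavailable; there one substitutes the $p$-superharmonic comparison functions of Section 3.
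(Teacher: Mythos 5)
Your existence sketch is in the same spirit as the paper's one--parameter shooting (the paper organizes it as openness, nonemptiness and lower--boundedness of the shooting sets $S_1$, $S_2$, and then identifies $k(p,N)=\inf S_i$ by the two--sided perturbation argument of Lemma \ref{lem2axxf2} rather than by asserting continuity of the blow--up abscissa $\tau$); deferring those details is acceptable, and nothing there is fatally wrong.

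The genuine gap is in the uniqueness part, and you have named it yourself: you do not know how to construct the second solution $z$, the integral identity, or how to carry the Wronskian bookkeeping through the singularity at $x=\pi/2$ where $H\to-\infty$. The paper's proof shows that none of that machinery is needed, because of two moves you are missing. First, one differentiates in $k$ not the equation for $H$ but the equation \eqref{m1adadu} for $U=H/k=\frac{1}{k}\frac{y'}{y}$; the resulting equation for $W=\partial U/\partial k$ is the \emph{first--order linear} scalar ODE \eqref{m1adadu2}, namely $W'=QW+(1-p)(U^{2}+1)^{2}/((p-1)U^{2}+1)$ with $W(0)=0$, whose inhomogeneous term has a fixed negative sign since $p>1$. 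The integrating--factor formula \eqref{m1ad66d} then gives $W<0$ on $(0,x_k)$ outright --- no Sturm comparison, no Wronskian, no Pohozaev identity, and no adaptation of the dilation--generated solution is required. Second, the endpoint degeneracy is never confronted: after extending $\partial U/\partial k<0$ to all $k\in[k_1,k_2]$ (Lemma \ref{lem2axxf34dd}, a connectedness argument over the compact parameter interval that your plan also omits), one integrates $\frac{1}{k_1}\frac{y_1'}{y_1}>\frac{1}{k_2}\frac{y_2'}{y_2}>0$ to get $y_1\le y_2^{k_1/k_2}$ on $[0,\pi/2)$, and the contradiction is read off from the one--sided derivative at $\pi/2$ using $-\infty<y'(\pi/2)<0$ (Lemma \ref{lem2axxf29}): since $k_1/k_2>1$ the power $y_2^{k_1/k_2}$ has vanishing derivative at $\pi/2$ while $y_1$ does not. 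All the delicate analysis is thus done on $y$, which is regular at $\pi/2$, not on $H$ or on $\partial_k H$, which are not. As written, your uniqueness argument is a plan with an acknowledged missing core, not a proof.
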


 \subsection{Proof of Theorem~\ref{th1} (Existence)}

 We  make use of a topological shooting technique developed in five steps.

 \smallskip \noindent {\bf (Step I)}  First, in
 Lemma~\ref{lem2axx} we prove, for appropriately chosen $k<0,$ that
    solutions of  (\ref{m1add}) with $H(0)=0$ decrease and satisfy $H(x) \to -\infty$ at
  a critical value $x_{k} \in (0,
 \pi].$

 \smallskip \noindent {\bf (Step II)}
We define two topological shooting sets $S_{1}$ and $S_{2}$.\par\noindent
{\bf (a)} For $1<p<N$ we set
  \begin{eqnarray}
  S_{1} = {\Bigg \{ } {\hat k}< {\frac {N-2}{1-p}} \colon ~{\rm if}~  {\hat k}<k< {\frac {N-2}{1-p}}~~{\rm then}~~~~~~~~~~~~~~
       \nonumber \vspace{0.1in} \\
 H'(x)<0~~{\rm and}~~-\infty< H(x) <0\textrm{ for all }  x \in  \left ( 0,{\frac {\pi}{2}} \right ] {\Bigg \} },
 \label{m1ad86}
 \end{eqnarray}
 \noindent {\bf (b)} and for $p\ge N$ we set
   \begin{eqnarray}
  S_{2} = {\Bigg  \{ } {\hat k}< 0 ~{\Bigg |}  ~{\rm if}~~{\hat k}<k< 0~~{\rm then}~~H'(x)<0~~
        \nonumber \vspace{0.1in} \\
   {\rm and}~~-\infty< H(x) <0\textrm{ for all } x \in \left ( 0,{\frac {\pi}{2}} \right ] {\Bigg  \} }.
 \label{m1ad86w}
 \end{eqnarray}
Not that it follows from continuity of solutions with respect to $k$
that $S_{1}$ and $S_{2}$ are open.

  \smallskip \noindent {\bf (Step III)} $S_{1}$ and $S_{2}$ are non empty, open subintervals
  of $(-\infty,0)$ (Lemma~\ref{lem2axxf}).

   \smallskip \noindent {\bf (Step IV)}  $S_{1}$ and $S_{2}$ are bounded below (Lemma~\ref{lem2axxf1})

   \smallskip \noindent {\bf (StepV)}  {\bf (a)}  We have that $k(p,N)=\inf S_{1}$ if $1<p<N$ and
   {\bf (b)} $k(p,N)=\inf S_{2}$  if $p\ge N$ (Lemma~\ref{lem2axxf2} ).
\par\medskip
 \noindent {\bf Remark.} Throughout we   make use of the fact  that
  (\ref{m1add}) can be written as
 \begin{eqnarray}
       \left ( (p-1)H^{2}+k^{2} \right )  H'  =  \left ( H^{2}+k^{2} \right )^{2}(1-p)
  \nonumber \vspace{0.1in} \\
 + \left ( H^{2}+k^{2} \right )
        \left ( k(p-N)+
       (2-N) H \cot(x) \right ).
 \label{m1adad}
 \end{eqnarray}


  \begin{lem}\label{lem2axx}
   {\bf (a)} For $1<p<N$ we have
 \begin{eqnarray}
H'(0)=0 ~~{\rm if}~~k={\frac {N-p}{1-p}}<0,~~~{\rm
and}~~~H'(0)<0~~{\rm if}~~k<{\frac {N-p}{1-p}},
 \label{eq:ode54w34eeew}
 \end{eqnarray}
and for each $k<{\frac {N-p}{1-p}}$ there exists $x_{k} \in (0, \pi ]$ such that
   \begin{eqnarray}
 -\infty< H(x) <0~~~{\rm and}~~ H'(x)<0\textrm{  for all } x \in \left ( 0,x_{k}
  \right ),~~{\rm and} ~ \lim_{ x \to x_{k}^{-}}H(x)=-\infty.
 \label{m3w4e}
 \end{eqnarray}

 \smallskip \noindent   {\bf (b)} For $p\ge N$  we have
  \begin{eqnarray}
 H'(0)<0\textrm{  for all } k <0.
 \label{eq:oded54w34eeew}
 \end{eqnarray}
  For each $k<0$ there exists $x_{k} \in (0, \pi ]$ such that
   (\ref{m3w4e}) holds.

 \end{lem}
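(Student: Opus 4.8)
The plan is to first extract the value of $H'(0)$ from a Taylor expansion at the singular endpoint $x=0$, and then run a monotonicity-and-blow-up argument on the first-order equation (\ref{m1adad}).

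\emph{Step 1: the value of $H'(0)$.} Since the initial value problem (\ref{m1addaa})--(\ref{m1addddaa}) is well posed and $y(0)=1\neq0$, the function $H=y'/y$ is $C^{1}$ near $x=0$ with $H(0)=0$; hence $H(x)=H'(0)x+o(x)$ and $H(x)\cot x\to H'(0)$ as $x\to0^{+}$. Passing to the limit $x\to0^{+}$ in (\ref{m1add}), where $H^{2}\to0$ and $H^{2}+k^{2}\to k^{2}$, gives $k^{2}H'(0)=\bigl(k(1-p)+p-N\bigr)k^{3}+(2-N)k^{2}H'(0)$, that is,
\[
H'(0)=\frac{k\bigl(k(1-p)+p-N\bigr)}{N-1}.
\]
In case {\bf (a)}, write $k(1-p)+p-N=(1-p)\bigl(k-\tfrac{N-p}{1-p}\bigr)$; since $1-p<0$ and $\tfrac{N-p}{1-p}<0$ (because $p<N$), this quantity is $0$ when $k=\tfrac{N-p}{1-p}$ and positive when $k<\tfrac{N-p}{1-p}$, so $H'(0)=0$ in the first case, while in the second it is $k<0$ times a positive number divided by $N-1>0$, hence $H'(0)<0$. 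In case {\bf (b)}, $k(1-p)+p-N=-k(p-1)+(p-N)>0$ for every $k<0$ since $p\ge N$ and $p>1$, so $H'(0)<0$ for all $k<0$. This proves (\ref{eq:ode54w34eeew}) and (\ref{eq:oded54w34eeew}).

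\emph{Step 2: monotonicity persists.} Fix $k$ in the relevant range, so $H'(0)<0$ and hence $H<0$ just to the right of $0$. Let $(0,x_{k})$, $x_{k}>0$ (nonempty since $H'(0)<0$), be the maximal subinterval of $(0,\pi)$ on which the solution exists and satisfies $H<0$ and $H'<0$. Because $(p-1)H^{2}+k^{2}>0$ and $H^{2}+k^{2}>0$, equation (\ref{m1adad}) shows that the sign of $H'$ equals that of $Q(x,H):=(1-p)(H^{2}+k^{2})+k(p-N)+(2-N)H\cot x$. I claim $H'$ cannot vanish while $H<0$: if $x_{1}\in(0,\pi)$ were the first point with $H'(x_{1})=0$, then $H_{1}:=H(x_{1})<0$ is finite, and differentiating (\ref{m1adad}) in $x$ and using $H'(x_{1})=0$ annihilates every term carrying a factor $H'$, leaving $\bigl((p-1)H_{1}^{2}+k^{2}\bigr)H''(x_{1})=-(2-N)H_{1}(H_{1}^{2}+k^{2})\csc^{2}x_{1}$, which is negative since $N>2$ and $H_{1}<0$ give $(2-N)H_{1}>0$. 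Thus $H''(x_{1})<0$; but $H'<0$ on $(0,x_{1})$ with $H'(x_{1})=0$ force $H''(x_{1})\ge0$, a contradiction. Hence $H'<0$ on all of $(0,x_{k})$, so $H$ is strictly decreasing there and stays negative, and $(0,x_{k})$ is in fact the maximal interval of existence inside $(0,\pi)$.

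\emph{Step 3: blow-up at $x_{k}\le\pi$.} Writing (\ref{m1adad}) as $H'=G(x,H)$ with $G$ real-analytic on $(0,\pi)\times\R$ (since $(p-1)H^{2}+k^{2}\ge k^{2}>0$), the decreasing negative function $H$ can leave every compact subset of $(0,\pi)\times\R$ only by $x_{k}=\pi$ or by $H\to-\infty$; being monotone, $H$ has a limit at $x_{k}^{-}$, so it remains to exclude $x_{k}=\pi$ with finite limit. On $[\tfrac{\pi}{2},\pi)$ we have $H\le H(\tfrac{\pi}{2})=:-c<0$, and for $x$ close enough to $\pi$, $\cot x\le-\tfrac{1}{2(\pi-x)}$; since $2-N<0$ this gives $(2-N)H\cot x\le-\tfrac{(N-2)c}{2(\pi-x)}$, whence $Q(x,H)\le k(p-N)-\tfrac{(N-2)c}{2(\pi-x)}<0$ once $\pi-x$ is small. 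As $\tfrac{H^{2}+k^{2}}{(p-1)H^{2}+k^{2}}\ge m:=\min\{1,(p-1)^{-1}\}>0$, equation (\ref{m1adad}) yields $H'(x)\le mQ(x,H)\le-\tfrac{c_{0}}{\pi-x}$ near $\pi$ for some $c_{0}>0$; integrating forces $H(x)\to-\infty$ as $x\to\pi^{-}$, contradicting finiteness of the limit. Therefore $H(x)\to-\infty$ at some $x_{k}\in(0,\pi]$, which with Step 2 is exactly (\ref{m3w4e}). The main obstacle is Step 2: the sign of $H''$ at a hypothetical first turning point of $H$, obtained by differentiating the equation, must be shown incompatible with $x_{1}$ being the first zero of the previously negative $H'$ — this is precisely where $N>2$ enters, and the same sign mechanism is what will later isolate the threshold $k(p,N)$; Step 3 is then routine.
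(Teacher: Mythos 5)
Your proposal is correct and follows essentially the same route as the paper: the explicit formula $H'(0)=\tfrac{k\,(k(1-p)+p-N)}{N-1}$ read off from the equation at $x=0$, the second-derivative sign contradiction (using $N>2$) at a hypothetical first zero of $H'$, and the forced blow-up $H\to-\infty$ driven by the $(2-N)H\cot x$ term on $[\tfrac{\pi}{2},\pi)$ together with the standard continuation argument ruling out a finite limit at an interior $x_k$. The only cosmetic differences are that you bound $\cot x$ by $-\tfrac{1}{2(\pi-x)}$ near $\pi$ where the paper integrates $\cot$ exactly to get $M_2\ln(\sin x)$, and you spell out the limit $H(x)\cot x\to H'(0)$ that the paper leaves implicit.
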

 \begin{proof} {\bf (a)} Let $1<p<N.$ It follows from (\ref{m1add}) and $H(0)=0$  that
    \begin{eqnarray}
H'(0)= {\frac {k(1-p)}{N-1}} \left ( k-{\frac {N-p}{1-p}} \right ).
 \label{eq:ode523}
 \end{eqnarray}
Since $1<p<N,$ we conclude from (\ref{eq:ode523}) that
$H'(0)=0$ when $k={\frac {N-p}{1-p}},$ and $H'(0)<0$ when $k<{\frac
{N-p}{1-p}}.$ Next, suppose that there exists   $k<{\frac
{N-p}{1-p}}$ and corresponding
 value $\tilde x \in (0,\pi)$ such that
    \begin{eqnarray}
 ~H'(x)<0\textrm{ for all }  x \in [0, \tilde x), ~~ -\infty < H(\tilde x)<0~~{\rm and}~~ H'(\tilde x)=0.
 \label{eq:odde523}
 \end{eqnarray}
Then
     \begin{eqnarray}
  H''(\tilde x) \ge 0.
 \label{eq:odde523u}
 \end{eqnarray}
  It follows  from a differentiation of (\ref{m1add}) and (\ref{eq:odde523}) that
    \begin{eqnarray}
       \left ( (p-1)H^{2}(\tilde x)+k^{2} \right )H''(\tilde x)   =
       (N-2) \csc^{2} (\tilde x) H(\tilde x)
      \left ( H^{2}(\tilde x)+k^{2} \right )<0,
 \label{m1adaddd}
 \end{eqnarray}
contradicting (\ref{eq:odde523u}). We conclude that $-\infty<H(x)<0$
and $H'(x)<0$ for $x \in (0,\pi)$ as long as the solution exists.
Thus, for each $k<{\frac {N-p}{1-p}}$ we   let $[0,x_{k})$ denote
the maximal subinterval of $[0,\pi]$ over which the solution of
(\ref{m1add}) exists. We need to prove that
\begin{eqnarray}
\lim_{x\to x_{k}^{-}}H(x)=-\infty\textrm { for all } k < {\frac {N-p}{1-p}}.
 \label{eq:ode52ddd36}
 \end{eqnarray}
First, suppose 
 there exists  $k<{\frac {N-p}{1-p}}$ such that $x_{k}\ge\pi.$ We need to prove that
 \begin{eqnarray}
\lim_{x \to \pi^{-}}H(x)=-\infty.
 \label{eq:ode52rddd3dd6}
 \end{eqnarray}
 Suppose, however, that (\ref{eq:ode52rddd3dd6}) does not hold, and that
 \begin{eqnarray}
-\infty<H(x)<0, ~~H'(x)<0 \text{ for all } x \in (0,\pi),\textrm { and } -\infty <  H(\pi)<0.
 \label{eq:ode5236}
 \end{eqnarray}
Dividing  (\ref{m1adad}) by $(p-1)H^{2}+k^{2},$  we conclude, since
$1-p<0$ and $k\,p<0,$  that
\begin{eqnarray}
       H'  \le    {\frac { H^{2}+k^{2}}{(p-1)H^{2}+k^{2}}}
        \left ( -kN+
       (2-N)\, H \cot(x) \right )\text{ for all } x \in (0,\pi).
   \label{m1adaddd9}
 \end{eqnarray}
 We need   upper bounds on the terms on the right side of (\ref{m1adaddd9}).
 For this   define
  \begin{eqnarray}
 L={\rm min} {\Bigg \{} 1, {\frac {1}{p-1}} {\Bigg \} }.
 \label{m1addaaz1}
 \end{eqnarray}
It is easily verified that
  \begin{eqnarray} L\le
 {\frac {H^{2}+k^{2}} { (p-1)H^{2}+k^{2} } } \le {\frac {p}{p-1}}~~~\forall H<0~~{\rm and}~~ k<0.
 \label{m1addaaz1d}
 \end{eqnarray}
 We focus on the interval ${\frac {\pi}{2}} \le x <\pi$ where $-\infty<\cot(x) \le 0.$
First,    define
   \begin{eqnarray}
M_{1}=-{\frac {kNp}{p-1}}>0~~~{\rm and}~~~M_{2}=(2-N)H\left ( {\frac
{\pi}{2}} \right )L>0.
 \label{m101}
 \end{eqnarray}
 Combining (\ref{m1adaddd9}) with (\ref{m1addaaz1}), (\ref{m1addaaz1d}),  and (\ref{m101}),
 we obtain
   \begin{eqnarray}
H' \le M_{1} + M_{2}\cot(x), ~~~{\frac {\pi}{2}} \le x <\pi.
 \label{m101d}
 \end{eqnarray}
Integrating (\ref{m101d}), we conclude that
    \begin{eqnarray}
H(x) \le H \left ( {\frac {\pi}{2}} \right ) +M_{1} \left ( x-{\frac
{\pi}{2}} \right )
 + M_{2}\ln (\sin (x)), ~~~ {\frac {\pi}{2}} \le x < \pi.
 \label{m10d1d}
 \end{eqnarray}
 Since $M_{2}>0,$ it  follows from (\ref{m10d1d}) that $H(x) \to -\infty$ as $x \to \pi^{-},$
 which contradicts (\ref{eq:ode5236}). Thus, it must be the case that (\ref{eq:ode52rddd3dd6}) holds  if $x_{k}=\pi.$
Finally, we need to consider the possibility that
  there exists  $k<{\frac {N-p}{1-p}}$ such that $0<x_{k}<\pi$ and (\ref{eq:ode52ddd36}) does not hold.
  Then
\begin{eqnarray}
-\infty<H(x)<0~~{\rm and}~~H'(x)<0 \textrm{ for all } x \in (0, x_{k}),
 \label{eq:oded5236}
 \end{eqnarray}
 and
 \begin{eqnarray}
-\infty < H(x_{k})<0.
 \label{eq:ode523dd68}
 \end{eqnarray}
 Then  $(p-1)H^{2}(x_{k})+k^{2}>0$ is bounded and  all terms on the right  side of
(\ref{m1adad}) are non zero
 and bounded. Thus,   the solution can be uniquely continued past $x_{k},$ which  contradicts the definition of $x_{k}.$
 This
 completes the proof of property (\ref{eq:ode52ddd36}) and part {\bf (a)}. The proof of part {\bf (b)} is essentially the same and is omitted for the sake of
 brevity.

 \end{proof}
   \begin{lem}\label{lem2axxf}
   {\bf (a)} For $1<p<N$
   there exists $k_{1}<{\frac {N-p}{1-p}}$ such that
 \begin{eqnarray}
k \in S_{1}\text{  for all }  k\in \left ( k_{1},{\frac {N-p}{1-p}} \right
).
 \label{eq:ode579}
 \end{eqnarray}
 \smallskip \noindent   {\bf (b)} For  $p\ge N$
     there exists $k_{2}<0$ such that
\begin{eqnarray}
k \in S_{2}\textrm{  for all }k \in \left ( k_{1},0\right ).
 \label{eq:ode578}
 \end{eqnarray}

 \end{lem}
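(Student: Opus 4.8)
The plan is to find an interval of parameters $k$, lying just below the value at which $H'(0)=0$, such that the solution of (\ref{m1add}) with $H(0)=0$ stays negative and strictly decreasing all the way to $x=\pi/2$. By Lemma~\ref{lem2axx} this solution is already negative and decreasing on its maximal interval of existence $(0,x_{k})$, so the only thing to rule out is that it blows up too early, i.e.\ that $x_{k}\le\pi/2$; once that is excluded, property (\ref{m3w4e}) applied on $(0,\pi/2]$ gives precisely the condition appearing in the definition of $S_{1}$ (resp.\ $S_{2}$).

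The key observation is that at the threshold the initial value problem degenerates to the trivial solution. Substituting $H\equiv0$ into (\ref{m1add}) --- equivalently $y\equiv1$ into (\ref{m1addaa}) --- annihilates every term except $\bigl(k(1-p)+p-N\bigr)k^{3}$, and this vanishes precisely when $k=0$ or $k=\tfrac{N-p}{1-p}$, which by (\ref{eq:ode523}) are also the values where $H'(0)=0$. Hence, by the well-posedness of the initial value problem recalled at the start of the section, the unique solution with $H(0)=0$ is $H\equiv0$ when $k=\tfrac{N-p}{1-p}$ (a negative value when $1<p<N$, the setting of part (a)) and when $k=0$ (part (b)), and in either case it exists on all of $[0,\pi)$.

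I would then invoke continuous dependence of the solution of (\ref{m1add}), $H(0)=0$, on the parameter $k$, as furnished by the solvability scheme discussed at the beginning of this section. Since the limiting solution $H\equiv0$ exists throughout the compact interval $[0,\pi/2]$, for $k$ sufficiently close to the threshold the solution $H_{k}$ also exists on $[0,\pi/2]$ and remains uniformly close to $0$ there, say $|H_{k}|<1$. In particular $H_{k}$ does not blow up on $[0,\pi/2]$, so $x_{k}>\pi/2$, and by the preceding paragraph $H_{k}'(x)<0$ and $-\infty<H_{k}(x)<0$ for all $x\in(0,\pi/2]$. Thus there is $\delta>0$ with this property for every $k\in\bigl(\tfrac{N-p}{1-p}-\delta,\tfrac{N-p}{1-p}\bigr)$ in part (a), resp.\ $k\in(-\delta,0)$ in part (b); since the defining condition for membership in $S_{1}$ (resp.\ $S_{2}$) at a point $k$ only quantifies over parameters between $k$ and the threshold, it follows that the whole interval $\bigl(\tfrac{N-p}{1-p}-\delta,\tfrac{N-p}{1-p}\bigr)$ (resp.\ $(-\delta,0)$) is contained in $S_{1}$ (resp.\ $S_{2}$). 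Taking $k_{1}=\tfrac{N-p}{1-p}-\delta$ and $k_{2}=-\delta$ proves the lemma and, in particular, that $S_{1}$ and $S_{2}$ are nonempty.

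The only delicate point is that continuous dependence is being used up to the endpoint $x=0$, where $\cot x$ is singular and the textbook theorem for regular systems cannot be quoted directly. The clean way to handle it is to split the interval at a small $\varepsilon>0$: on $[0,\varepsilon]$ one works with the integral-equation formulation and successive approximations (which, as recalled at the start of the section, give both uniqueness and continuity in $k$, and force $H_{k}(\varepsilon)\to0$ and $H_{k}'(\varepsilon)\to0$ as $k$ tends to the threshold), while on the regular interval $[\varepsilon,\pi/2]$ one applies the classical continuous dependence theorem to the parameter $k$ together with the Cauchy data at $\varepsilon$. Apart from this, the argument is soft and uses no estimates on (\ref{m1add}) beyond those already established in Lemma~\ref{lem2axx}.
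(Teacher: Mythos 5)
Your route is genuinely different from the paper's. The paper never invokes continuity in $k$ at the threshold: it observes that on $[0,\pi/2]$ the $\cot x$ term in (\ref{m1adad}) has a favorable sign, derives the differential inequality $H'\ge -p\bigl(H^{2}+k(k-\tfrac{N-p}{1-p})\bigr)$, and integrates it to get the explicit sub-barrier $H(x)\ge -\sqrt{c_{k}}\,\tan\bigl(p\sqrt{c_{k}}\,x\bigr)$ with $c_{k}=k\bigl(k-\tfrac{N-p}{1-p}\bigr)$; since $c_{k}\to 0$ as $k$ approaches the threshold, choosing $k$ so that $p\sqrt{c_{k}}<1$ (condition (\ref{m176})) keeps the barrier finite up to $x=\pi/2$ and rules out blow-up, uniformly in $k$. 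Your observation that the threshold solution is the equilibrium $H\equiv 0$, so that nearby solutions cannot blow up before $\pi/2$, is a soft version of the same mechanism, and for part (a) it is essentially sound: there the threshold $\tfrac{N-p}{1-p}$ is strictly negative, the coefficient $(p-1)H^{2}+k^{2}\ge k^{2}$ is bounded away from zero, and the only delicate point is the $\cot x$ singularity at $x=0$, which you correctly isolate and handle by the integral-equation formulation on $[0,\varepsilon]$ followed by classical continuous dependence on $[\varepsilon,\pi/2]$.

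The gap is in part (b), which you dismiss as identical to part (a) although the threshold there is $k=0$, where the equation genuinely degenerates. At $k=0$ the coefficient of $y''$ in (\ref{m1addaa}) is $(p-1)(y')^{2}$, which vanishes at the initial point since $y'(0)=0$, so the successive-approximation well-posedness recalled at the start of Section 4 does not cover the limit problem; correspondingly, the normal form $H'=F(x,H,k)$ of (\ref{m1adad}) has numerator and denominator both vanishing at $(H,k)=(0,0)$. The function $F$ does extend continuously there, but it is not Lipschitz in $H$ uniformly as $k\to 0^{-}$, so the ``textbook'' continuous-dependence theorem you invoke does not apply at this parameter value: one would need a Kamke-type continuity argument on $[\varepsilon,\pi/2]$ and, more seriously, a separate proof that $H_{k}(\varepsilon)\to 0$ as $k\to 0^{-}$, which is exactly where uniform-in-$k$ control on $[0,\varepsilon]$ ceases to be standard. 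The paper's quantitative barrier, whose constants are uniform in $k$ near $0$, is precisely what closes this; alternatively you could pass to $U=H/k$, which satisfies (\ref{m1adadu}), an equation that remains regular at $k=0$, and recover $H_{k}=kU_{k}\to 0$. As written, however, part (b) of your argument is incomplete.
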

\begin{proof}
 {\bf (a)}  For $0 \le x \le {\frac {\pi}{2}},$
  as long as $H'(x)<0$ and $-\infty <H(x) \le 0$ we conclude that
  $(2-N)H(x)\cot(x) \ge 0,$ hence (\ref{m1adad}) reduces to
  \begin{eqnarray}
       \left ( (p-1)H^{2}+k^{2} \right )  H'  \ge  \left ( H^{2}+k^{2} \right )\left (    (1-p)
        \left ( H^{2}+k^{2} \right ) +k(p-N) \right ).
 \label{m1adadp}
 \end{eqnarray}
Next,   divide (\ref{m1adadp}) by $ (p-1)H^{2}+k^{2},$ rearrange
terms and obtain
   \begin{eqnarray}
        H'  \ge
    {\frac {   \left ( H^{2}+k^{2} \right )(1-p)}{  (p-1) H^{2}+k^{2}  }}
       \left (
          H^{2}+k \left ( k-   {\frac {N-p}{1-p}} \right ) \right ).
 \label{m1adaddp}
 \end{eqnarray}
 Note that
   \begin{eqnarray}
     k \left ( k-   {\frac {N-p}{1-p}} \right )>0\text{ for all } k < {\frac {N-p}{1-p}}<0.
 \label{m1adad33p}
 \end{eqnarray}
Thus, when $ k < {\frac {N-p}{1-p}}<0$, we  combine
 inequality (\ref{m1addaaz1d}) with (\ref{m1adaddp}) and (\ref{m1adad33p}),
  and  conclude that
    \begin{eqnarray}
        H'  \ge
    -p
       \left (
          H^{2}+k \left ( k-   {\frac {N-p}{1-p}} \right ) \right )
  \label{m1adaddp22}
 \end{eqnarray}
for $x \in  \left [ 0, {\frac {\pi}{2}} \right ]$ as long as
$-\infty<H(x) \le 0$ and $H'(x)<0.$ An integration of
(\ref{m1adaddp22}) gives
    \begin{eqnarray}
        H(x) \ge  -{\sqrt {k \left ( k-   {\frac {N-p}{1-p}} \right ) }}\tan
        \left ( x\,p\, {\sqrt {k \left ( k-   {\frac {N-p}{1-p}} \right ) }} \right )>-\infty
  \label{m1ada5433}
 \end{eqnarray}
for $x \in  \left [ 0, {\frac {\pi}{2}} \right ]$ as long as
$-\infty<H(x) \le 0$ and $H'(x)<0.$ Finally,   let $k_{1} <{\frac
{N-p}{1-p}} $ such that
    \begin{eqnarray}
          p{\sqrt {k \left ( k-   {\frac {N-p}{1-p}} \right ) }} <1~~ \forall k
          \in \left ( k_{1}, {\frac {N-p}{1-p}}  \right ).
  \label{m176}
 \end{eqnarray}
Thus, when $ k \in \left ( k_{1}, {\frac {N-p}{1-p}}  \right ),$ we
conclude  from (\ref{m1ada5433}), (\ref{m176}) and
Lemma~\ref{lem2axx}
 that
   \begin{eqnarray}
             H(x) \ge  -{\sqrt {k \left ( k-   {\frac {N-p}{1-p}} \right ) }}\tan
        \left ( {\sqrt {k \left ( k-   {\frac {N-p}{1-p}} \right ) }}px \right )>-\infty ~~\forall x \in \left [
        0, {\frac {\pi}{2}} \right ],
  \label{m177f}
 \end{eqnarray}
and that
   \begin{eqnarray}
        H'(x)<0 ~~\forall x \in \left [
        0, {\frac {\pi}{2}} \right ].
  \label{m177df}
 \end{eqnarray}
It follows from (\ref{m177f}) and (\ref{m177df}) that $k \in S_{1}$
for all $   k \in \left ( k_{1}, {\frac {N-p}{1-p}}  \right ).$ This
completes the proof of  part {\bf (a)}.

\smallskip \noindent {\bf {Part (b).}}
 There are two small adjustments to make. First, because ${\frac {N-p}{1-p}}\ge 0,$  we replace (\ref{m1adad33p}) with
     \begin{eqnarray}
     k \left ( k-   {\frac {N-p}{1-p}} \right )> 0 \text{ for all } k < 0.
 \label{m1adad33paa}
 \end{eqnarray}
Second, in place of choosing  $k_{1}$ to satisfy (\ref{m176}), we
let $k_{2}<0 $ be chosen such that
  \begin{eqnarray}
          p{\sqrt {k \left ( k-   {\frac {N-p}{1-p}} \right ) }} <1\text{ for all }  k
          \in \left ( k_{2}, 0 \right ).
  \label{m176aa}
 \end{eqnarray}
With these adjustments  the proof of {\bf (b)} is the same as the
proof of {\bf (a)},  and we omit the details for the sake of
brevity.

\end{proof}

   \begin{lem}\label{lem2axxf1}
   {\bf (a)} For $1<p<N$
   there is a value $  k_{3} <{\frac {N-p}{1-p}}<0$ such that $0 <x_{k_{3}} \le {\frac {\pi}{2}}.$

 \smallskip \noindent   {\bf (b)} For $p\ge N$
     there is a value $ k_{4} <0$ such that $0<x_{  k_{4}}\le{\frac {\pi}{2}}.$
 \end{lem}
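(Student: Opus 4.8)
The plan is to exhibit the required $k_3$ (resp.\ $k_4$) by showing that, once $|k|$ is large enough, the blow--up time $x_k$ furnished by Lemma~\ref{lem2axx} already satisfies $x_k\le\pi/2$. The engine is a Riccati differential inequality for $H$ on $[0,\pi/2]$, followed by an ODE comparison.

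The heart of the argument is the estimate
\[
H'(x)\ \le\ -c\,\bigl(H(x)^2+k^2\bigr),\qquad c=c(p,N)>0,\qquad x\in\bigl(0,\min\{x_k,\tfrac\pi2\}\bigr).
\]
To get it, start from \eqref{m1adad}, written as $((p-1)H^2+k^2)H'=(H^2+k^2)\bigl[(1-p)(H^2+k^2)+k(p-N)+(2-N)H\cot x\bigr]$. On $[0,\pi/2]$ the obstructing term $(2-N)H\cot x=(N-2)(-H)\cot x$ is nonnegative (recall $N\ge 3$, $H<0$), so it must be bounded above. Using $\cot x\le 1/x$ on $(0,\pi/2]$ together with $-H(x)/x\le -H'(x)$ — which follows from $H'(x)x-H(x)=\int_0^x H''(t)\,t\,dt$ once one knows $H''\le 0$ on $(0,x_k)$ (a fact I would establish beforehand by differentiating \eqref{m1add}, in the spirit of the computation in Lemma~\ref{lem2axx}; note incidentally $H''(0)=0$) — one obtains $(2-N)H\cot x\le(N-2)(-H')$. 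Substituting this into \eqref{m1adad} and moving the new term to the left yields, after collecting,
\[
H'\bigl[(p+N-3)H^2+(N-1)k^2\bigr]\ \le\ (H^2+k^2)\bigl[(1-p)(H^2+k^2)+k(p-N)\bigr].
\]
For $|k|$ large the right bracket is $\le-\tfrac{p-1}{2}(H^2+k^2)$: in case {\bf (b)} ($p\ge N$) we have $k(p-N)\le0$ and drop it; in case {\bf (a)} ($1<p<N$, $k<\tfrac{N-p}{1-p}$) we have $0<k(p-N)=|k|(N-p)\le\tfrac{p-1}{2}k^2$ whenever $|k|\ge\tfrac{2(N-p)}{p-1}$. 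Since $(p+N-3)H^2+(N-1)k^2\le\kappa(H^2+k^2)$ with $\kappa:=\max\{p+N-3,\,N-1\}>0$, the Riccati inequality follows with $c=\tfrac{p-1}{2\kappa}$.

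Now compare $H$ with the solution of $w'=-c(w^2+k^2)$, $w(0)=0$, namely $w(x)=-|k|\tan(c|k|x)$, which blows up to $-\infty$ at $x^\ast=\pi/(2c|k|)$. Since $H(0)=0=w(0)$ and $H'\le-c(H^2+k^2)$, the standard differential--inequality comparison gives $H\le w$ as long as both exist, so $H\to-\infty$ no later than $x^\ast$; by the definition of $x_k$ this forces $x_k\le x^\ast=\pi/(2c|k|)$. Hence $x_k\le\pi/2$ as soon as $|k|\ge 1/c$, and it suffices to take $k_3<-\max\{1/c,\ 2(N-p)/(p-1)\}$ in part {\bf (a)} (this automatically gives $k_3<\tfrac{N-p}{1-p}$) and $k_4<-1/c$ in part {\bf (b)}. (As a sanity check, for $p=2$ this yields $\kappa=N-1$ and $x_k\le\pi(N-1)/|k|$, consistent with $k(2,N)=-(N-1)$ and $H=-\tan x$; the constant obtained this way is not sharp, but that is immaterial here.)

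I expect the main obstacle to be the verification of $H''\le 0$ on $(0,x_k)$, precisely because it is needed to dominate $(2-N)H\cot x$ by $(N-2)(-H')$ and because near $x=0$ this bound must be sharp to leading order in $k$: there $(2-N)H\cot x\to(N-2)|H'(0)|$, a quantity of the same order $k^2$ as the dominant terms in \eqref{m1adad}, so there is no slack. Should a clean proof of $H''\le 0$ be awkward in some range of $p$, a fallback is the rescaling $H(x)=|k|\,\widetilde H(|k|x)$: as $|k|\to\infty$, $\widetilde H$ converges on compact $s$--intervals to the solution of $((p-1)\widetilde H^2+1)\widetilde H'=(1-p)(\widetilde H^2+1)^2+(2-N)s^{-1}\widetilde H(\widetilde H^2+1)$ with $\widetilde H(0)=0$, which blows up at a finite $s^\ast$, forcing $x_k\sim s^\ast/|k|<\pi/2$ for $|k|$ large; part {\bf (b)} is handled identically, with the simplification that $k(p-N)\le 0$ gives extra negativity at no cost.
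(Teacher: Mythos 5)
Your overall strategy (a Riccati inequality $H'\le -c(H^2+k^2)$ followed by comparison with $w=-|k|\tan(c|k|x)$) is the right engine, and it is essentially the same engine the paper uses. But the step you yourself flag as the main obstacle is a genuine gap, and in fact the concavity claim it rests on is false in part of the parameter range. You need $(2-N)H\cot x\le (N-2)(-H')$ on all of $(0,\min\{x_k,\pi/2\})$, which you derive from $\cot x\le 1/x$ and $-H(x)/x\le -H'(x)$, the latter requiring $H''\le 0$. A Taylor expansion at the origin shows this fails when $p>N+1$ (a regime contained in part (b)). Writing $H(x)=ax+cx^3+O(x^5)$ with $a=H'(0)=\frac{(1-p)k^2+k(p-N)}{N-1}<0$ and inserting into \eqref{m1adad} gives
\begin{equation*}
(N+1)\,c=(1-p)a^{2}+\frac{(N-2)a}{3}+\frac{(2-p)a^{3}}{k^{2}},
\end{equation*}
and since $a\sim\frac{(1-p)k^{2}}{N-1}$ as $k\to-\infty$, the two terms of order $k^{4}$ combine to $a^{2}(1-p)\frac{N+1-p}{N-1}$, which is \emph{positive} when $p>N+1$. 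Hence $c>0$ and $H''=6cx+O(x^{3})>0$ near $x=0$ for $|k|$ large, so $H$ is convex there, $-H(x)/x\ge -H'(x)$, and your pointwise bound on the cotangent term reverses (the difference $(2-N)H\cot x-(N-2)(-H')=(N-2)(2c-|a|/3)x^{2}+O(x^{4})$ is positive since $c\sim k^{4}\gg|a|\sim k^{2}$). So the derivation of the Riccati inequality collapses exactly where you predicted it was delicate, and the computation "in the spirit of Lemma~\ref{lem2axx}" (which only controls $H''$ at critical points of $H$, not globally) cannot rescue it. Your rescaling fallback is plausible but is only a sketch: the limit equation still carries the singular term $(2-N)s^{-1}\widetilde H$, and the locally uniform convergence as $|k|\to\infty$ near $s=0$ is precisely the point that would need proof.

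The paper avoids the origin altogether, which is the simple fix you are missing: it argues by contradiction (assume $x_k>\pi/2$ for all admissible $k$) and works only on $[\pi/4,\pi/2]$, where $0\le\cot x\le 1$ and the elementary inequality $(\sqrt{p-1}\,H-k)^2\ge0$ gives $0<\frac{(2-N)H\cot x}{(p-1)H^{2}+k^{2}}\le\frac{2-N}{2k\sqrt{p-1}}$, a quantity that is $O(1/|k|)$ and therefore harmless for $|k|$ large. This yields $H'\le\frac{(1-p)L}{2}(H^{2}+k^{2})$ on $[\pi/4,\pi/2]$, and integrating $\arctan(H/k)$ over that fixed-length interval forces $\arctan(H(\pi/2)/k)>\pi/2$, a contradiction. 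If you restrict your Riccati argument to $[\pi/4,\pi/2]$ in the same way, no information about $H''$ is needed and your comparison argument goes through.
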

\begin{proof}
  {\bf (a)}
  We assume, for contradiction that
   \begin{eqnarray}
x_{k} > {\frac {\pi}{2}} \textrm{ for all }  k <{\frac {N-p}{1-p}}.
 \label{eq:ode5765}
 \end{eqnarray}
  Throughout  we  focus   on  the interval $\left [{\frac {\pi}{4}}, {\frac {\pi}{2}}\right ]. $
   First, we observe that
   \begin{eqnarray}
0 \le \cot(x) \le 1 ~~~{\rm and}~~~ 0<{\frac {k^{2}}{
(p-1)H^{2}+k^{2} }} \le 1~~~\forall x \in \left [ {\frac
{\pi}{4}},{\frac {\pi}{2}}\right ].
 \label{eq:ode5789}
 \end{eqnarray}
 When $N>2,$ $p>1,$ $ {\frac {\pi}{4}} \le x \le {\frac {\pi}{2}},$ $H<0$ and $k<0$   we conclude that
   \begin{eqnarray}
  0< {\frac {(2-N)H\cot(x)}{  (p-1)H^{2}+k^{2} }} \le \left ( {\frac {2-N}{2 k{\sqrt {p-1}}    }} \right )
  \left (
  {\frac { 2 k{\sqrt {p-1}}H}{  (p-1)H^{2}+k^{2} }} \right )
  \le {\frac { 2 -N}{   2k{\sqrt {p-1} }}},
 \label{m1ad86dd}
 \end{eqnarray}
since $ \left (  {\sqrt {(p-1)}}   H-k  \right )^{2} \ge 0.$ Next,
divide (\ref{m1adad}) by
 $(p-1)H^{2}+k^{2}$ and get
 \begin{eqnarray}
         H'  =  ( H^{2}+k^{2}) \left (
       \left ( {\frac { (1-p)(H^{2}+k^{2} ) }{(p-1)H^{2}+k^{2}}} \right )
       +   \left ( {\frac {  p-N   }{k}} \right )
  \left ( {\frac {  k^{2}   }{(p-1)H^{2}+k^{2}}} \right ) \right )
  \nonumber \vspace{0.1in} \\
   +  ( H^{2}+k^{2}) \left ( {\frac {  (2-N)H\cot(x)  }{(p-1)H^{2}+k^{2}}} \right )~~~~~~~~~~~~~~~~~~.
 \label{m1adad33}
 \end{eqnarray}
Combining (\ref{m1addaaz1d}), (\ref{eq:ode5765}),  (\ref{m1ad86dd})
and(\ref{m1adad33}, we conclude that
 \begin{eqnarray}
       H'  \le  ( H^{2}+k^{2})
       \left ( (1-p)L+{\frac {  (p-N ) }{k}} + {\frac {  2-N }{2k {\sqrt {p-1}}}} \right ).
  \label{m1adad33d}
 \end{eqnarray}
Now let $\tilde k< {\frac {N-2}{1-p}}<0$ such that
 \begin{eqnarray}
        {\frac {(1-p)L}{2}}+{\frac {  (p-N ) }{k}} + {\frac {  2-N }{2k {\sqrt {p-1}}}}<0\textrm{ for all }  k < \tilde k.
  \label{m1addad33d}
 \end{eqnarray}
It follows from (\ref{m1adad33d}) and (\ref{m1addad33d}) that
\begin{eqnarray}
       H'  \le     {\frac {  (1-p)L }{2}}   ( H^{2}+k^{2})
        \textrm{ for all } x \in  \left [{\frac {\pi}{4}}, {\frac {\pi}{2}}\right ] ~~{\rm and }~~k < \tilde k.
 \label{m1adad3dd3d}
 \end{eqnarray}
Integrating (\ref{m1adad3dd3d}) from ${\frac {\pi}{4}}$ to $x$ gives
\begin{eqnarray}
      \tan^{-1}  \left (   {\frac {  H(x)}{k}} \right )  \ge \tan^{-1}  \left (   {\frac {  H(\pi/4)}{k}} \right )
    +  {\frac {Lk(1-p)}{2}}  \left (x- {\frac {\pi}{4}} \right )
\textrm{ for all }  x \in  \left [{\frac {\pi}{4}}, {\frac {\pi}{2}}\right ].
 \label{m1adad3dd3d2}
 \end{eqnarray}
Assumption (\ref{eq:ode5765}), and the fact that
$H(x)$ is decreasing on $[0, x_{k}],$ imply that
\begin{eqnarray}
    {\frac {\pi}{2}}>  \tan^{-1}  \left (   {\frac {  H(x)}{k}} \right )  \ge
     \tan^{-1}  \left [   {\frac {  H(\pi/4)}{k}} \right ) > 0
\textrm{ for all } x \in  \left [ {\frac {\pi}{4}}, {\frac {\pi}{2}}\right ]~~{\rm and}~~k<\tilde k.
 \label{m1adad3ddd3d2}
 \end{eqnarray}
Thus, when $x={\frac {\pi}{2}},$ it follows from
(\ref{m1adad3dd3d2}) and (\ref{m1adad3ddd3d2}) that
\begin{eqnarray}
    {\frac {\pi}{2}}>  \tan^{-1}  \left (   {\frac {  H(\pi/2)}{k}} \right )  >
    {\frac {L\, k(1-p)}{8} }  \pi  > {\frac {\pi}{2}},
\textrm{ for all }  k< \min {\Bigg \{ } \tilde k, {\frac {4}{L(1-p)}} {\Bigg \} },
 \label{m1a88}
 \end{eqnarray}
a contradiction.  Therefore, we conclude that   (\ref{eq:ode5765})
does not hold, and that there exists $k_{3}< {\frac {N-2}{1-p}}$
such that $x_{k_{3}} \le {\frac {\pi}{2}}.$ The proof of part {\bf
(b)} is essentially the same as the proof of {\bf (a)}, and is
omitted for the sake of brevity.

\end{proof}

   \begin{lem}\label{lem2axxf2}
   {\bf (a)} For $1<p<N$ we have
 \begin{eqnarray}
k(p,N)= \inf S_{1}.
 \label{eq:ode579b}
 \end{eqnarray}

 \smallskip \noindent   {\bf (b)} For  $p\ge N$, we have
 \begin{eqnarray}
 k(p,N)= \inf S_{2}.
 \label{eq:ode578b}
 \end{eqnarray}

 \end{lem}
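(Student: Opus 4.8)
The plan is to prove that $k(p,N)$ equals $k^{*}:=\inf S_{1}$ in case (a) (and $k^{*}:=\inf S_{2}$ in case (b)), by showing that the solution of \eqref{m1add} with $H(0)=0$ and $k=k^{*}$ satisfies \eqref{m3w4}. I treat case (a) in detail; case (b) is identical after replacing $\tfrac{N-p}{1-p}$ by $0$, $S_{1}$ by $S_{2}$, and Lemmas \ref{lem2axx}(a), \ref{lem2axxf}(a), \ref{lem2axxf1}(a) by their (b) counterparts. By Lemma \ref{lem2axxf}, $S_{1}$ contains an interval $(k_{1},\tfrac{N-p}{1-p})$ with $k_{1}<\tfrac{N-p}{1-p}$, so $S_{1}$ is nonempty and $k^{*}<\tfrac{N-p}{1-p}$; by Lemma \ref{lem2axxf1} it is bounded below, so $k^{*}$ is finite; and since $S_{1}$ is open and the defining condition in \eqref{m1ad86} is monotone in $\hat k$, $S_{1}$ is an interval with left endpoint $k^{*}\notin S_{1}$. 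For $k<\tfrac{N-p}{1-p}$, Lemma \ref{lem2axx} says $H(\cdot,k)$ is negative and strictly decreasing on $(0,x_{k})$ and blows up to $-\infty$ at $x_{k}$; hence the condition in \eqref{m1ad86} for such a $k$ is exactly that $x_{k}>\tfrac{\pi}{2}$, so $x_{k}>\tfrac{\pi}{2}$ for every $k\in S_{1}$, while $x_{k^{*}}=\tfrac{\pi}{2}$ would, again by Lemma \ref{lem2axx}, force \eqref{m3w4} at $k=k^{*}$. Thus it suffices to prove $x_{k^{*}}=\tfrac{\pi}{2}$.

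I would first show $x_{k^{*}}\le\tfrac{\pi}{2}$. The set $\{k<\tfrac{N-p}{1-p}:x_{k}>\tfrac{\pi}{2}\}$ is open: solutions depend continuously on $k$, and $H'(0,k)$ is given by \eqref{eq:ode523}, which is continuous in $k$ and strictly negative for $k<\tfrac{N-p}{1-p}$, so if $x_{k_{0}}>\tfrac{\pi}{2}$ then $H(\cdot,k_{0})$ is finite, negative and strictly decreasing on the compact interval $[0,\tfrac{\pi}{2}]$, and these strict inequalities persist for all $k$ near $k_{0}$. Now if $x_{k^{*}}>\tfrac{\pi}{2}$, openness would give $x_{k}>\tfrac{\pi}{2}$ on a whole neighbourhood of $k^{*}$; but for every $\varepsilon>0$ the point $k^{*}-\varepsilon$ lies below $\inf S_{1}$, hence is not in $S_{1}$, which — since $x_{k}>\tfrac{\pi}{2}$ already holds throughout $(k^{*},\tfrac{N-p}{1-p})$ — forces the existence of some $k\in(k^{*}-\varepsilon,k^{*}]$ with $x_{k}\le\tfrac{\pi}{2}$, contradicting that neighbourhood. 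Hence $x_{k^{*}}\le\tfrac{\pi}{2}$.

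Next I would show $x_{k^{*}}\ge\tfrac{\pi}{2}$. Suppose $x_{k^{*}}<\tfrac{\pi}{2}$ and fix $b\in(0,x_{k^{*}})$. On $[b,\tfrac{\pi}{2}]$ the function $\cot x$ is bounded, so dividing \eqref{m1adad} by $(p-1)H^{2}+k^{2}$ and isolating the leading term gives $H'\le-\tfrac12H^{2}$ whenever $|H|$ is large, uniformly for $k$ near $k^{*}$; consequently $-1/H$ then decreases at rate at least $\tfrac12$, so the solution blows up within a time $O\!\left(1/|H(b,k)|\right)$ of $b$. Choosing $b$ so close to $x_{k^{*}}$ that $b+O\!\left(1/|H(b,k^{*})|\right)<\tfrac{\pi}{2}$, and then using continuous dependence on the compact interval $[0,b]$ to keep $|H(b,k)|$ comparably large for all $k$ near $k^{*}$, we conclude $x_{k}<\tfrac{\pi}{2}$ for all such $k$, contradicting $x_{k}>\tfrac{\pi}{2}$ on $(k^{*},\tfrac{N-p}{1-p})$. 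Therefore $x_{k^{*}}=\tfrac{\pi}{2}$, and by Lemma \ref{lem2axx} the solution at $k=k^{*}$ satisfies \eqref{m3w4}; this proves $k(p,N)=\inf S_{1}$.

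The main difficulty is making the two continuity-in-$k$ arguments rigorous, especially the lower-bound step: one must exclude the possibility that, although the $k^{*}$-solution blows up before $\tfrac{\pi}{2}$, the solutions for nearby $k$ blow up only later (or survive all the way to $\pi$). This is exactly what the superquadratic a priori estimate $H'\le-\tfrac12H^{2}$ near blow-up prevents, since it forces the blow-up location to be upper semicontinuous in $k$. A smaller technical point is the behaviour at $x=0$, where $H(0)=0$ for every $k$, so uniform convergence on $[0,\tfrac{\pi}{2}]$ alone does not yield the strict sign of $H$ and $H'$ near $0$; this is handled via the explicit formula \eqref{eq:ode523}, which keeps $H'(0,k)$ negative and bounded away from $0$ for $k$ near any fixed value below $\tfrac{N-p}{1-p}$.
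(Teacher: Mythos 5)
Your proposal is correct and follows essentially the same strategy as the paper's proof: set $k_{*}=\inf S_{1}$ (finite and below $\tfrac{N-p}{1-p}$ by Lemmas \ref{lem2axxf} and \ref{lem2axxf1}), rule out $x_{k_{*}}>\tfrac{\pi}{2}$ by continuous dependence on the compact interval $\left[0,\tfrac{\pi}{2}\right]$ contradicting the infimum property, and rule out $x_{k_{*}}<\tfrac{\pi}{2}$ by showing that solutions for $k$ slightly above $k_{*}$ (which must satisfy $x_{k}>\tfrac{\pi}{2}$) inherit a very negative value at a point just before $x_{k_{*}}$ and are then forced to blow up before $\tfrac{\pi}{2}$. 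The only cosmetic difference is in the second step: the paper integrates $dH/(H^{2}+k^{2})$ exactly (an $\arctan(H/k)$ substitution, leading to the contradiction $\delta\le\tfrac{3}{10}\delta$), whereas you use the Riccati comparison $H'\le -cH^{2}$ and the decrease of $-1/H$ to bound the blow-up time; both are the same quantitative upper-semicontinuity-of-blow-up argument.
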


\begin{proof}
 {\bf (a)}  It  follows from the definition of $S_{1},$
Lemma~\ref{lem2axxf} and Lemma~\ref{lem2axxf1} that $k_{*}=\inf
S_{1}>-\infty.$ Let $H_{*}(x)$ denote the solution of
(\ref{m1add}).  When $k=k_{*}$   Lemma~\ref{lem2axx} implies  that
 $x_{k_{*}} \in \left (0, \pi \right ]$ exists such that
 \begin{eqnarray}
-\infty<H_{*}(x)<0~~{\rm and}~~H_{*}'(x)<0\textrm{ for all } x\in (0,x_{k_{*}}),~~{\rm and}~~\lim_{x \to x_{k_{*}}^{-}}H_{*}(x)=-\infty.
 \label{eq:ode578bd}
 \end{eqnarray}
Our goal is to show that $x_{k_{*}}={\frac {\pi}{2}},$ from which we
conclude that $k(p,N)=k_{*}=\inf S_{1}.$  For contradiction we
assume that   $x_{k_{*}} \ne {\frac {\pi}{2}}.$
 Then $x_{k_{*}}\in \left (0, {\frac {\pi}{2}} \right ) \cup \left ( {\frac {\pi}{2}}, \pi \right ]. $
Suppose that $x_{k_{*}}\in  \left ( {\frac {\pi}{2}}, \pi \right ].
$ Then $k_{*} \in S_{1}$ and  we conclude from the definition of
$S_{1},$ Lemma~\ref{lem2axx} and continuity of solutions with
respect to $k$ over the compact interval $\left [ 0, {\frac
{\pi}{2}} \right ]$ that, if $k_{*}-k>0$ is sufficiently small, then
 \begin{eqnarray}
-\infty<H(x)<0~~{\rm and}~~H'(x)<0\textrm{ for all } x \in \left ( 0, {\frac
{\pi}{2}} \right ].
  \label{eq:ode5ddd78bd}
 \end{eqnarray}
Thus, $k \in S_{1}$ if $k_{*}-k >0$ is sufficiently small, which
contradicts the definition of $k_{*}.$

\smallskip \noindent It remains to consider the second possibility, that $0<x_{k_{*}}<{\frac {\pi}{2}}.$
We define
 \begin{eqnarray}
 \delta={\frac {\pi}{2}}-x_{k_{*}}>0.
  \label{eq:od543ee}
 \end{eqnarray}
We restrict our attention to the interval $\left [ {\frac
{x_{k_{*}}}{2}}, {\frac {\pi}{2}} \right ]$ where
 \begin{eqnarray}
 0 \le \cot(x) \le R=\cot \left ( {\frac {x_{k_{*}}}{2}} \right ) \textrm{ for all } x \in \left [ {\frac {x_{k_{*}}}{2}},
 {\frac {\pi}{2}} \right ].
  \label{eq:oddd5re43ee}
 \end{eqnarray}
It follows from (\ref{eq:ode578bd}) that there is an $\tilde x \in
\left ( {\frac {x_{k_{*}}}{2}}, x_{k_{*}} \right )$ such that
 \begin{eqnarray}
-\infty< H_{*} (\tilde x) <-1,
  \label{eq:oddd31}
 \end{eqnarray}
 \begin{eqnarray}
 0<  {\Bigg | } {\frac {1}{k_{*}}}\left ( {\frac {\pi}{2}}-\tan^{-1} \left ( {\frac { H_{*} (\tilde x)}{k}} \right )  \right ) {\Bigg | }
 <{\frac {\delta}{20}},
  \label{eq:oddd313}
 \end{eqnarray}
and
 \begin{eqnarray}
 0<{\frac {\pi}{2}} \left ( {\frac {|k_{*}||p-N|+k_{*}^{2}|2-p|+(N-2)R}{ (p-1)|H_{*} (\tilde x)|}} \right )<{\frac {\delta}{20}}.
   \label{eq:oddddd31d3}
 \end{eqnarray}
We conclude from
(\ref{eq:oddd31})-(\ref{eq:oddd313})-(\ref{eq:oddddd31d3}), and
continuity of solutions with respct to $k$ over the  interval $\left
[ 0, \tilde x \right ],$ that if $k-k_{*}>0$ is sufficiently small
then $H(x)$ satisfies
  \begin{eqnarray}
-\infty< H (\tilde x) <-1,
  \label{eq:oddd31z}
 \end{eqnarray}
 \begin{eqnarray}
 0< {\Bigg | } {\frac {1}{k}}\left ( {\frac {\pi}{2}}-\tan^{-1} \left ( {\frac { H  (\tilde x)}{k}} \right )  \right ) {\Bigg | }
 <{\frac {\delta}{10}},
  \label{eq:oddd313z}
 \end{eqnarray}
and
 \begin{eqnarray}
 0<{\frac {\pi}{2}} \left ( {\frac {|k_{*}||p-N|+k_{*}^{2}|2-p|+(N-2)R}{ (p-1)|H  (\tilde x)|}} \right )<{\frac {\delta}{10}}.
   \label{eq:oddddd31d3z}
 \end{eqnarray}
Next, recall from Lemma~\ref{lem2axxf} that
 \begin{eqnarray}
 x_{k} > {\frac {\pi}{2}}~~~\forall k \in S_{1}.
   \label{eq:oddd54}
 \end{eqnarray}
It follows from (\ref{eq:oddd54}), and the fact that
$H'(x)<0\textrm{ for all } x \in (0,x_{k_{*}}),$ that
 (\ref{eq:oddd31z})-(\ref{eq:oddd313z})-(\ref{eq:oddddd31d3z}) can be extended to the entire interval
  $\left [ \tilde x, {\frac {\pi}{2}} \right ],$ that is,
    \begin{eqnarray}
-\infty< H ( x) <-1 \textrm{ for all } x \in \left [ \tilde x, {\frac
{\pi}{2}} \right ],
  \label{eq:oddddd31zdd}
 \end{eqnarray}
 \begin{eqnarray}
 0<{\Bigg | } {\frac {1}{k}}\left ( {\frac {\pi}{2}}-\tan^{-1} \left ( {\frac { H  (  x)}{k}} \right )  \right ) {\Bigg | }
 <{\frac {\delta}{10}} \textrm{ for all } x \in \left [ \tilde x, {\frac {\pi}{2}} \right ],
  \label{eq:oddd313zz}
 \end{eqnarray}
and
 \begin{eqnarray}
 0<{\frac {\pi}{2}} \left ( {\frac {|k_{*}||p-N|+k_{*}^{2}|2-p|+(N-2)R}{ (p-1)|H  (  x)|}} \right )<{\frac {\delta}{10}}
\textrm{ for all } x \in \left [ \tilde x, {\frac {\pi}{2}} \right ],
   \label{eq:oddddd31d3zz}
 \end{eqnarray}
when $k-k_{*}>0$ is sufficiently small. In order to make use of
properties
(\ref{eq:oddddd31zdd})-(\ref{eq:oddd313zz})-(\ref{eq:oddddd31d3zz})
 we first write (\ref{m1adad}) as
 \begin{eqnarray}
       \left ( (p-1)H^{2}+k^{2} \right )  H'  =  \left ( H^{2}+k^{2} \right )\left ( (1-p)H^{2} -k^{2} \right )
  \nonumber \vspace{0.1in} \\
 +  (H^{2}+k^{2}) \left (
       k^{2}(2-p)+ k(p-N)+
       (2-N) H \cot(x) \right ).
 \label{m1adadz1}
 \end{eqnarray}
Dividing (\ref{m1adadz1}) by $(p-1)H^{2}+k^{2}$ gives
  \begin{eqnarray}
H'=\left (H^{2}+k^{2} \right ) (-1+F(x)),
  \label{eq:odd32}
 \end{eqnarray}
 where
   \begin{eqnarray}
F(x)= {\frac {k^{2}(2-p)+ k(p-N)+
       (2-N) H(x) \cot(x)}{(p-1)H^{2}(x)+k^{2}}}.
  \label{eq:odddd32}
 \end{eqnarray}
Finally, we divide (\ref{eq:odd32}) by $H^{2}+k^{2}$ and obtain
  \begin{eqnarray}
{\frac {dH}{H^{2}+k^{2}}}=  (-1+F)\, dx.
  \label{eq:odd32fff}
 \end{eqnarray}
Integrating (\ref{eq:odd32fff}) from $\tilde x$ to $x$ gives
 \begin{eqnarray}
 {\frac {1}{k}} \left (\tan^{-1} \left ( {\frac { H  (  x)}{k}} \right )- \tan^{-1} \left ( {\frac { H  ( \tilde x)}{k}} \right )
  \right )  =  \tilde x-x+\int_{\tilde x}^{x}F(t)dt , \textrm{ for all }  x \in \left [ \tilde x, {\frac {\pi}{2}} \right ].
   \label{eq:odew}
 \end{eqnarray}
Setting $x={\frac {\pi}{2}}$ in (\ref{eq:odew}), we obtain
 \begin{eqnarray}
{\frac {\pi}{2}}-{\tilde x}= {\frac {1}{k}} \left (\tan^{-1} \left (
{\frac { H  ( \tilde  x)}{k}}  \right ) -{\frac {\pi}{2}} \right ) +
{\frac {1}{k}} \left (  {\frac {\pi}{2}} -\tan^{-1} \left ( {\frac {
H  ( \pi/2)}{k}} \right ) \right )
   +\int_{\tilde x}^{\pi/2}F(t)   dt
\label{wct}
 \end{eqnarray}
We need upper an bound  on  the right side of (\ref{wct}). First, it
follows from (\ref{eq:oddd313zz}) that
\begin{eqnarray}
  {\Bigg | }{\frac {1}{k}} \left (\tan^{-1} \left ( {\frac { H  ( \tilde  x)}{k}}  \right )
-{\frac {\pi}{2}} \right ) + {\frac {1}{k}} \left (  {\frac
{\pi}{2}} -\tan^{-1} \left ( {\frac { H  ( \pi/2)}{k}} \right )
\right ) {\Bigg | } ~~~~~~~~~~~~~~~~~~~
  \nonumber \vspace{0.1in} \\
   \le  {\Bigg | }{\frac {1}{k}} \left (\tan^{-1} \left ( {\frac { H  ( \tilde  x)}{k}}  \right )
-{\frac {\pi}{2}} \right ) {\Bigg | } +  {\Bigg | }{\frac {1}{k}}
\left (  {\frac {\pi}{2}} -\tan^{-1} \left
 ( {\frac { H  (\pi/2)}{k}} \right ) \right )
{\Bigg | } <  {\frac {\delta}{5}} ~~~~~~ \label{wctab}
 \end{eqnarray}
when $k-k_{*}>0$ is sufficiently small and $\tilde x \le x \le
{\frac {\pi}{2}}.$   Next, we note that $|k| \le |k_{*}|$ and
$|k|^{2}\le |k_{*}|^{2}$ when $k_{*}<  k< {\frac {N-p}{1-p}}<0.$
From these inequalitites, property (\ref{eq:oddd5re43ee}) and the
definition of $F$ given in (\ref{eq:odddd32}) we obtain
    \begin{eqnarray}
| F (x) | \le
 {\frac {|k_{*}|^{2}|2-p|+ |k_{*}||p-N|+
       |2-N| |H(x)|R}{(p-1)H^{2}(x)+k^{2}}}~~~~~~~~~~~~~~~~~~~~~~~~
         \nonumber \vspace{0.1in} \\
 =  {\frac {|k_{*}|^{2}|2-p|+ |k_{*}||p-N|
        }{(p-1)H^{2}(x)+k^{2}}}+{\frac {
       |2-N| |H(x)|R}{(p-1)H^{2}(x)+k^{2}}} \textrm{ for all }  x \in \left [ \tilde x, {\frac {\pi}{2}} \right ].
  \label{eq:odddd3a2}
 \end{eqnarray}
It follows from (\ref{eq:oddddd31zdd}) and the fact that
$H'(x)<0\textrm{ for all }x \in \left [  \tilde x, {\frac {\pi}{2}}
\right],$ that
\begin{eqnarray}
 H^{2}(x) \ge |H(x)| \ge |H(\tilde x)|\textrm{ for all } x \in \left [ \tilde x, {\frac {\pi}{2}} \right].
\label{wctee}
 \end{eqnarray}
We conclude from (\ref{wctee}) that
  \begin{eqnarray}
 {\frac {1} { (p-1)H^{2}(x)+k^{2} } }  \le {\frac {1}{ (p-1)|H(x)| } } \le {\frac {1}{(p-1)|H(\tilde x)| }}
\textrm{ for all }  x \in \left [ \tilde x, {\frac {\pi}{2}} \right].
\label{wcteer}
 \end{eqnarray}
and
  \begin{eqnarray}
 {\frac {|H(x)|} { (p-1)H^{2}(x)+k^{2} } }  \le {\frac {|H(x)|}{ (p-1)|H^{2}(x)| } } \le
   {\frac {1}{(p-1)|H( x)| }} \le {\frac {1}{(p-1)|H(\tilde x)| }}
\label{wcdddteerdd}
 \end{eqnarray}
 for  all $x \in \left [ \tilde x, {\frac {\pi}{2}} \right].$
 Next, we combine (\ref{eq:odddd3a2}),   (\ref{wcteer})and (\ref{wcdddteerdd}), and obtain
    \begin{eqnarray}
| F (x) | \le
 {\frac {|k_{*}|^{2}|2-p|+ |k_{*}||p-N|+
       |2-N| R}{(p-1)|H(\tilde x)| }} \textrm{ for all } x \in \left [ \tilde x, {\frac {\pi}{2}} \right].
 \label{eq:odddd3a2dd}
 \end{eqnarray}
It follows from (\ref{eq:oddd313z}) and (\ref{eq:odddd3a2dd}), and
the fact that ${\Bigg | }  \int_{\tilde x}^{\pi/2} F (t) dt {\Bigg |
} \le   \int_{\tilde x}^{\pi/2} |F (t)| dt$
 that
     \begin{eqnarray}
{\Bigg | }  \int_{\tilde x}^{\pi/2} F (t) dt {\Bigg | }   \le {\frac
{\pi}{2}} \left (
 {\frac {|k_{*}|^{2}|2-p|+ |k_{*}||p-N|+
       |2-N| R}{(p-1)|H(\tilde x)| }} \right )<{\frac {\delta}{10}}.
 \label{eq:odddd3addd2dd}
 \end{eqnarray}
 Finally, we substitute  (\ref{eq:od543ee}),  (\ref{wctab}) and (\ref{eq:odddd3addd2dd}) into equation (\ref{wct}),
  and obtain
      \begin{eqnarray}
\delta={\frac {\pi}{2}}-\tilde x \le {\frac {\delta}{5}}+{\frac
{\delta}{10}}={\frac {3 }{10}}\delta,
 \label{eq:oddd45}
 \end{eqnarray} which is clearly 
 a contradiction
\end{proof}

 \subsection{Proof of Theorem~\ref{th1} (Uniqueness)}
 Our proof, which  relies on  an analysis of the equations satisfied
by
 $U={\frac {H(x)}{k}}$ and     $ {\frac {\partial U}{\partial k}},$  is divided into
    five basic
 steps:

  \medskip \noindent {\bf  (Step I)} First, in     Lemma~\ref{lem2axxf29}  we   prove that
  solutions of (\ref{m1addaa})-(\ref{m1addddaa})
   must
 satisfy
 the   fundamentally important  property
   $   -\infty<y'(\pi/2)<0.$

 \medskip \noindent {\bf  (Step II)} In    Lemma~\ref{lem2axxf278} we analyze the equation satisfied
 by $U(x)={\frac {H(x)}{k}},$ and
  prove  key properies of
  $U(x),$ namely
    \begin{eqnarray}
 U'(x) > 0~~~\forall x \in [0,x_{k})~~~{\rm and}~~~\lim_{x \to x_{k}^{-}}U(x)=\infty,
 \label{eq:ode50}
 \end{eqnarray}
 for each negative $k$ in an approriately chosen  range.
The function $U(x)$ satisfies the equation
  \begin{eqnarray}
       \left ( (p-1)U^{2}+1 \right )  U'  =  \left ( U^{2}+1 \right )^{2}k(1-p)+ \left (U^{2}+1 \right )
        \left (  p-N +
       (2-N) U \cot(x) \right ).
 \label{m1adadu}
 \end{eqnarray}

 \medskip \noindent {\bf (Step III)} In  Lemma~\ref{lem2axxf34d}   we
  determine the behavior of $U$ as $k$ varies.
   Where it is appropriate we write  $U(x,k)$ to emphasize the fact
 that $U$ depends on both $x$ and $k.$    To determine the behavior of $U(x,k)$
 we follow the approach by
  Coffman~\cite{C} and Kwong~\cite{K}, who proved uniqueness of positive ground state solutions of
  \begin{eqnarray}
u''+{\frac {N-1}{r}}u'+u^{p}-u=0,~~u(0)=\alpha>0~~{\rm
and}~~u'(0)=0,
 \label{eq:odddd59}
 \end{eqnarray}
 where $N>1$ and $1<p<{\frac {N+1}{N-1}}.$
  Their proofs of uniqueness combine  the fact that
   $u(r,\alpha)$ is continuously diferentiable function of $r$ and $\alpha,$ together with a clever analysis of
    ${\frac {\partial u(r,\alpha)}{\partial \alpha}}.$
  We  also follow their  approach,
and combine these differentiablity properties together with an
analysis of   the  equation   satisfied by
 ${\frac {\partial U(x,k)}{\partial k}}.$\par
In  Lemma~\ref{lem2axxf34d} and Lemma~\ref{lem2axxf34dd} below  we
  analyze  the behavior of  solutions of the  equation  satisfied by
 ${\frac {\partial U(x,k)}{\partial k}},$  and prove that
   \begin{eqnarray}
 {\frac {\partial U(x,k)}{\partial k}}<0
\textrm{ for all } x \in \left [ 0,x_{k} \right ),
 \label{eq:ode5dd0}
 \end{eqnarray}
  where  $k<0$ is in  an apporopriately chosen range.

  \bigskip \noindent {\bf (Step IV)} We assume, for contradiction that there are two negative $k$ values,
 say $k_{1}<k_{2}<0,$
 and corresponding solutions $y_{1}$ and $y_{2}$ of
  (\ref{m1addaa})-(\ref{m1addddaa}) which satisfy  property  (\ref{m3wdd4dddd}). The key to obtaining
    a contradiction of
   this assumprtion is
  to extend  (\ref{eq:ode5dd0})   to
     \begin{eqnarray}
       {\frac {\partial U(x,k)}{\partial k}}<0\textrm{ for all } x \in \left [ 0, x_{k} \right )~~{\rm and}\textrm{ for all } k \in [ k_{1},k_{2}].
  \label{eq:ode5dddd0}
 \end{eqnarray}
   This is the goal of
  Lemma~\ref{lem2axxf34dd}.

  \bigskip \noindent {\bf (Step V)} We show how to make use of the results  described  in {\bf (I)-(IV)} to
  obtain a contradiction of the assumption that two solutions exist.

\noindent
    \begin{lem}\label{lem2axxf29}
   {\bf (a)} For $1<p<N$
    suppose that $k<{\frac {N-p}{1-p}}<0$ exists
   such that the solution of
    (\ref{m1addaa})-(\ref{m1addddaa}) satisfies property  (\ref{m3wdd4dddd}). Then we have
  \begin{eqnarray}
 -\infty<y' \left ( {\frac {\pi}{2}} \right ) <0.
 \label{eq:ode579by}
 \end{eqnarray}

 \smallskip \noindent   {\bf (b)} For  $p \ge N$
     suppose that $k<0$ exists
   such that the solution of
    (\ref{m1addaa})-(\ref{m1addddaa}) satisfies property  (\ref{m3wdd4dddd}). Then the solution also satisfies
 property (\ref{eq:ode579by}).
\end{lem}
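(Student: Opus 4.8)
The plan is to pass to the logarithmic derivative $H=y'/y$, determine the exact blow-up rate of $H$ at $\pi/2$, and then recover $y'$ by integration; part (b) will follow by the same argument.

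Since $y$ satisfies (\ref{m3wdd4dddd}) we have $y(0)=1$, $y>0$, $y'<0$ on $(0,\pi/2)$, so $H=y'/y$ is a $C^{1}$ function there with $H(0)=0$, $H<0$, it solves (\ref{m1add}) (equivalently (\ref{m1adad})) with $H(0)=0$, and $y(x)=\exp\!\big(\int_{0}^{x}H(t)\,dt\big)$. By uniqueness this $H$ is the solution treated in Lemma~\ref{lem2axx}; since the hypothesis $k<\frac{N-p}{1-p}$ of part (a) is exactly the hypothesis of Lemma~\ref{lem2axx}(a), that lemma gives that $H$ is strictly decreasing on $(0,x_{k})$. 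As $H$ is finite on $(0,\pi/2)$ we have $x_{k}\ge\pi/2$, and since $\int_{0}^{\pi/2}H\,dt=-\infty$ (because $y(\pi/2^{-})=0$ and $\log y(x)=\int_{0}^{x}H$) a finite $x_{k}>\pi/2$ is impossible. Hence $x_{k}=\pi/2$: $H$ is decreasing on $(0,\pi/2)$ and $H(x)\to-\infty$ as $x\to\pi/2^{-}$, i.e.\ (\ref{m3w4}) holds. In particular the singularity of $H$ sits exactly at $\pi/2$, where $\cot x$ is bounded — indeed $\cot x\to0$ — and this is the feature that drives the remaining estimate.

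The heart of the proof is the claim
\[
H(x)=-\frac{1}{\,\pi/2-x\,}+R(x),\qquad\text{with } R(x)=O\big(\pi/2-x\big)\ \text{ as } x\to\pi/2^{-}.
\]
To prove it I would set $v=-1/H>0$, so $v(\pi/2^{-})=0$ and $v'=H'/H^{2}$. Writing the right side of (\ref{m1adad}) over $H^{2}$ and letting $H\to-\infty$ (so $k^{2}/H^{2}\to0$ and, crucially, $\cot x/H\to0$ since $\cot x$ is bounded near $\pi/2$) one reads off the leading balance $(p-1)H^{2}H'\sim(1-p)H^{4}$, i.e.\ $v'\to-1$; hence $v(x)\asymp\pi/2-x$ near $\pi/2$. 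Feeding the estimates $v=O(\pi/2-x)$ and $\cot x=O(\pi/2-x)$ back into the identity for $v'$ yields $v'(x)=-1+O\big((\pi/2-x)^{2}\big)$, whence $v(x)=(\pi/2-x)+O\big((\pi/2-x)^{3}\big)$ and, inverting, $H(x)=-(\pi/2-x)^{-1}+O(\pi/2-x)$, which is the claim. This bootstrap — upgrading the crude divergence $H\to-\infty$ to the sharp rate — is the only delicate point and I expect it to be the main obstacle: it is precisely here that the structure of (\ref{m1adad}) is used, and one must check that the first-order $\cot x$ term is genuinely subordinate at the blow-up point; this works because that point is $\pi/2$, a regular point of $\cot$ (near $0$ or near $\pi$ the analysis would be completely different).

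Granting the claim, fix $x_{0}\in(0,\pi/2)$ and integrate: $\int_{x_{0}}^{x}H(t)\,dt=\log(\pi/2-x)+A(x)$, where $A(x)=-\log(\pi/2-x_{0})+\int_{x_{0}}^{x}R(t)\,dt$ converges, as $x\to\pi/2^{-}$, to a finite limit $A_{\infty}$ because $R$ is bounded on $[x_{0},\pi/2]$. Therefore $y(x)=y(x_{0})\exp\!\big(\int_{x_{0}}^{x}H\big)=y(x_{0})(\pi/2-x)e^{A(x)}$, and
\[
y'(x)=H(x)\,y(x)=y(x_{0})\,e^{A(x)}\big(-1+R(x)(\pi/2-x)\big)\xrightarrow[x\to\pi/2^{-}]{}\;-\,y(x_{0})\,e^{A_{\infty}},
\]
since $R(x)(\pi/2-x)=O((\pi/2-x)^{2})\to0$. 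The limit is finite and, because $y(x_{0})>0$, strictly negative, which is (\ref{eq:ode579by}). Part (b) ($p\ge N$) follows by the identical chain of estimates: it uses only $H(0)=0$, (\ref{m3w4}), and the boundedness of $\cot x$ at the blow-up point $\pi/2$, none of which depends on the range of $k$, invoking Lemma~\ref{lem2axx}(b) in place of Lemma~\ref{lem2axx}(a).
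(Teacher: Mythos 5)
Your argument is correct, but it is a genuinely different route from the paper's. The paper stays with the second--order equation for $y$: since $N\ge 2$, $y'\le 0$ and $\cot x\ge 0$ on $[0,\pi/2]$, the $\cot$ term in (\ref{m1addaa}) is nonnegative; combining this with the sign $(1-p)k+p-N>0$ and the elementary bounds $0\le (p-1)(y')^2/\bigl((p-1)(y')^2+k^2y^2\bigr)\le 1$ and $0\le k^2y^2/\bigl((p-1)(y')^2+k^2y^2\bigr)\le 1$ yields the uniform estimate $y''\ge -\bigl(\tfrac{2pk^2}{p-1}+k^2(p-1)\bigr)$ on $[0,\pi/2]$, so one integration already gives $y'(\pi/2)>-\infty$; strict negativity is then obtained by noting that $y(\pi/2)=y'(\pi/2)=0$ would force $y\equiv 0$ by uniqueness of solutions. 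Your proof instead pins down the exact blow-up rate $H=y'/y\sim -(\pi/2-x)^{-1}$ via the substitution $v=-1/H$ and a bootstrap in (\ref{m1adad}), and recovers $y'$ by exponentiating; the asymptotic analysis is sound (the $\cot x$ and $k(p-N)$ terms are indeed $O\bigl((\pi/2-x)^2\bigr)$ relative to the leading balance $v'\to -1$, precisely because the blow-up sits at $\pi/2$ where $\cot$ vanishes). What your approach buys is more information -- the precise vanishing rate $y(x)\sim c\,(\pi/2-x)$ and the actual value of the limit of $y'$ -- together with finiteness and strict negativity in a single stroke, without having to invoke uniqueness at the point $(y,y')=(0,0)$ where the leading coefficient of (\ref{m1addaa}) degenerates (a slightly delicate point in the paper's argument). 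What it costs is length: the paper's proof is essentially a two-line convexity estimate plus one integration, whereas yours requires establishing $x_k=\pi/2$ and a two-pass asymptotic expansion.
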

\begin{proof}
{\bf (a)}
      Suppose that $k<{\frac {N-p}{1-p}}<0$ exists
   such that the solution of
    (\ref{m1addaa})-(\ref{m1addddaa}) satisfies  (\ref{m3wdd4dddd}).
Then  it is easily verified that
%
  \begin{eqnarray}
0 \le  {\frac {(p-1)\left (y'\right )^{2}} {
(p-1)(y')^{2}+k^{2}y^{2} } } \le 1
  ~~{\rm and}~~ 0 \le  {\frac {    k^{2}y ^{2}} { (p-1)(y')^{2}+k^{2}y^{2} } } \le 1\,\,
  \text{for all } \in \left [0,x_{k} \right ).
 \label{m1addaazd1z4}
 \end{eqnarray}
Next, we observe that
  \begin{eqnarray}
 (1-p)k+p-N =(1-p) \left (k-{\frac {N-p}{1-p}} \right ) >0.
 \label{m1addaazd}
 \end{eqnarray}
 Combining (\ref{m1addaa}),   (\ref{m1addaazd1z4}),  (\ref{m1addaazd}), and the fact the $0 \le y \le 1$ for
  all $x \in \left [ 0, {\frac {\pi}{2}} \right ],$  gives
  \begin{eqnarray}
        y'' \ge
      -{\frac {2pk^{2}}{p-1}}-k^{2}(p-1)~~~\forall
       x \in \left [ 0, {\frac {\pi}{2}} \right ].
 \label{m1addaa101ee}
 \end{eqnarray}
 An integration of (\ref{m1addaa101ee}) gives
   \begin{eqnarray}
        y' \ge
      - \left ({\frac {2pk^{2}}{p-1}}+k^{2}(p-1) \right )x~~~\forall
       x \in \left [ 0, {\frac {\pi}{2}} \right ].
 \label{m1addaa1d01ee}
 \end{eqnarray}
 From (\ref{m1addaa1d01ee}) it follows that
 \begin{eqnarray}
   y' \left ( {\frac {\pi}{2}} \right ) \ge - \left ({\frac {2pk^{2}}{p-1}}+k^{2}(p-1) \right ){\frac {\pi}{2}}>-\infty.
    \label{m1addddd01ee}
 \end{eqnarray}
 It remains to prove that $  y' \left ( {\frac {\pi}{2}} \right )<0.$  It follows from the first inequality in (\ref{m3wdd4dddd}) that
 $  y' \left ( {\frac {\pi}{2}} \right )  \le 0.$ Suppose that  $  y' \left ( {\frac {\pi}{2}} \right )  = 0.$ Then it follows from
 the fact that $  y  \left ( {\frac {\pi}{2}} \right )  = 0,$ and uniqueness of solutions that $ y(x)=0$ for all
  $  x \in \left [ 0, {\frac {\pi}{2}} \right ], $ contradicting the first inequality in (\ref{m3wdd4dddd}). This proves
(\ref{eq:ode579by})   $1<p<N.$ The proof of
  (\ref{eq:ode579by})   $p \ge N$ is essentially the same and we omit the details.

 \end{proof}

     \begin{lem}\label{lem2axxf278}
   {\bf (a)} For $1<p<N$ and  each
  $k<{\frac {N-p}{1-p}}<0$
  the function $U(x)$ satisfies
  \begin{eqnarray}
 U'(x) > 0\textrm{ for all }x \in [0,x_{k})~~~{\rm and}~~~\lim_{x \to x_{k}^{-}}U(x)=\infty.
 \label{eq:ode579byddd}
 \end{eqnarray}

 \smallskip \noindent   {\bf (b)} For $p \ge N$ and
    ror each  $k<0$
the function $U(x)$  satisfies    (\ref{eq:ode579byddd}).
 \end{lem}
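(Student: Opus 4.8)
The plan is to deduce both statements from Lemma~\ref{lem2axx} via the substitution $U=H/k$, keeping careful track of the sign of $k<0$. First I would record the ODE satisfied by $U$. Writing $H=kU$, so that $H'=kU'$, $H^{2}+k^{2}=k^{2}(U^{2}+1)$ and $(p-1)H^{2}+k^{2}=k^{2}((p-1)U^{2}+1)$, substituting into (\ref{m1adad}) and dividing through by $k^{3}$ yields precisely (\ref{m1adadu}). I would also note that, for fixed $k\neq 0$, the map $H\mapsto H/k$ is a linear rescaling, so it neither creates nor removes singularities; hence the maximal interval on which $U$ exists starting from $U(0)=H(0)/k=0$ is exactly $[0,x_{k})$, the same interval as for $H$, and on it $U$ is $C^{1}$ and finite with no earlier blow-up. (The equation (\ref{m1adadu}) itself is needed mainly for the later lemmas on $\partial U/\partial k$; for the present statement it is only the sign information that matters.)

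For part \textbf{(a)}, fix $k<\frac{N-p}{1-p}<0$. Lemma~\ref{lem2axx}(a) gives $H'(0)<0$, together with $-\infty<H(x)<0$ and $H'(x)<0$ for all $x\in(0,x_{k})$, and $\lim_{x\to x_{k}^{-}}H(x)=-\infty$. Since $k<0$, the identity $U'(x)=H'(x)/k$ shows $U'$ has the opposite sign to $H'$: thus $U'(0)>0$ and $U'(x)>0$ on $(0,x_{k})$, i.e. $U'(x)>0$ for all $x\in[0,x_{k})$. Dividing the limit $H(x)\to-\infty$ by the negative constant $k$ gives $\lim_{x\to x_{k}^{-}}U(x)=+\infty$. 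This is exactly (\ref{eq:ode579byddd}).

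For part \textbf{(b)}, fix $k<0$ with $p\ge N$ (so $\frac{N-p}{1-p}\ge 0$ and there is no restriction on $|k|$). The argument is identical, invoking Lemma~\ref{lem2axx}(b) in place of (a): it provides $H'(0)<0$ for every $k<0$ together with the same sign and blow-up behaviour of $H$ on $[0,x_{k})$, and dividing by $k<0$ again produces $U'(x)>0$ on $[0,x_{k})$ and $\lim_{x\to x_{k}^{-}}U(x)=\infty$.

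There is no real obstacle here: the substance is already contained in Lemma~\ref{lem2axx}, and the only point requiring a little care is the elementary sign bookkeeping — dividing the inequalities $H'<0$ and $H(x)\to-\infty$ by the negative number $k$ reverses them — together with the observation that passing from $H$ to $U=H/k$ leaves the critical value $x_{k}$ unchanged, so the two functions blow up at exactly the same place.
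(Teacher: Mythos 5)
Your proposal is correct, and half of it coincides with the paper's argument: the paper also obtains $U(0)=0$ from $H(0)=0$ and deduces $0<U(x)<\infty$ on $(0,x_k)$ together with $\lim_{x\to x_k^-}U(x)=\infty$ directly from the sign and blow-up of $H$ in Lemma~\ref{lem2axx}, exactly as you do. Where you diverge is in proving $U'>0$. You simply divide the inequality $H'(x)<0$ (which is part of the conclusion (\ref{m3w4e}) of Lemma~\ref{lem2axx}, together with $H'(0)<0$ from (\ref{eq:ode54w34eeew}) and (\ref{eq:oded54w34eeew})) by the negative constant $k$; this is logically sufficient and shorter. The paper instead re-derives monotonicity intrinsically from the $U$-equation (\ref{m1adadu}): it computes $U'(0)=\frac{(1-p)k+p-N}{N-1}>0$ and rules out a first interior critical point $x_k^*$ by differentiating (\ref{m1adadu}) to get $U''(x_k^*)=\bigl(\tfrac{U^2(x_k^*)+1}{(p-1)U^2(x_k^*)+1}\bigr)(N-2)\csc^2(x_k^*)>0$, contradicting $U''(x_k^*)\le 0$. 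The paper's route mirrors the first-critical-point argument already used for $H$ and keeps everything phrased in terms of (\ref{m1adadu}), the equation that is then differentiated with respect to $k$ in Lemmas~\ref{lem2axxf34d} and \ref{lem2axxf34dd}; your route buys brevity by reusing the monotonicity of $H$ that has already been established. Both are valid, and your observation that the rescaling $U=H/k$ preserves the maximal interval $[0,x_k)$ and converts $H\to-\infty$ into $U\to+\infty$ is exactly the point needed.
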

\begin{proof}
 {\bf Proof.}  {\bf (a)}
  First, it follows from $H(0)=0$  that
     \begin{eqnarray}
      U(0)=0\textrm{ for all } k<{\frac {N-p}{1-p}}<0.
 \label{m1adadud}
 \end{eqnarray}
   Lemma~\ref{lem2axx} shows that, for each $k<{\frac {N-p}{1-p}}<0,$ there is an
   $x_{k} \in \left (0, \pi \right ]$  such that
    \begin{eqnarray}
 -\infty< H(x) <0\textrm{ for all }x \in \left ( 0,x_{k}
  \right ),~~{\rm and} ~ \lim_{ x \to x_{k}^{-}}H(x)=-\infty.
 \label{m3w4eddd}
 \end{eqnarray}
We conclude that
     \begin{eqnarray}
 0< U(x) <\infty\textrm{ for all } x \in \left ( 0,x_{k}
  \right ),~~{\rm and} ~ \lim_{ x \to x_{k}^{-}}U(x)=\infty.
 \label{m3w4edddd}
 \end{eqnarray}
 It remains to prove that $U'(x)>0\textrm{ for all }  x \in [0,x_{k}).$
  First, we conclude from (\ref{m1addaazd}), (\ref{m1adadu}) and initial condition (\ref{m1adadud}) that
   \begin{eqnarray}
      U'(0)= {\frac {(1-p)k+p-N}{N-1}}>0\textrm{ for all }k<{\frac {N-p}{1-p}}<0.
 \label{m1adadude}
 \end{eqnarray}
 Suppose, for contradiction, that for some   $k<{\frac {N-p}{1-p}}<0,$
  there exists an $x^{*}_{k} \in (0,x_{k})$ such that
    \begin{eqnarray}
      U'(x) >0\textrm{ for all } x \in \left (0, x^{*}_{k}\right )~~~{\rm and}~~U'\left (x^{*}_{k}\right )=0.
 \label{m1addadude}
 \end{eqnarray}
 We conclude from (\ref{m1addadude}) that
     \begin{eqnarray}
 U''\left (x^{*}_{k}\right ) \le 0.
 \label{m1addaduede}
 \end{eqnarray}
However, it  follows from a differentiation of (\ref{m1adadu}) that
      \begin{eqnarray}
 U''\left (x^{*}_{k}\right ) = \left ({\frac {  U^{2}  \left (x^{*}_{k}\right ) +1}
  {(p-1)U^{2}  \left (x^{*}_{k}\right
 )+1}}\right )
  (N-2) \left (  \csc^{2}   x^{*}_{k} \right )    >0,
 \label{m1adda2duede}
 \end{eqnarray}
 contradicting (\ref{m1addaduede}).
 This completes the proof of part {\bf (a)}.

 \smallskip \noindent {\bf Proof of  (b).}
As before, we have
     \begin{eqnarray}
      U(0)=0\textrm{ for all }  k <0.
 \label{m1adddadud}
 \end{eqnarray}
 Since $p \ge N,$ we
 conclude from (\ref{m1adadude}) that
    \begin{eqnarray}
      U'(0)= {\frac {(1-p)k+p-N}{N-1}}>0\text{ for all }  k<0.
 \label{m1adadudddde}
 \end{eqnarray}
 The remaining details of the proof of {\bf (b)}
 are now the same as   the proof of part {\bf (a),} and are omitted for the sake of
 brevity.

\end{proof}

 \begin{lem}\label{lem2axxf34d}
   {\bf (a)} For $1<p<N$ and  $k<{\frac {N-p}{1-p}}<0$ we have
   \begin{eqnarray}
  {\frac {\partial U(x,k)}{\partial k}}<0\textrm{  for all }x \in \left ( 0, x_{k} \right ).
 \label{eq:oddde579byac}
 \end{eqnarray}

 \smallskip \noindent   {\bf (b)} For $p \ge N$  and  $k<0$
  property (\ref{eq:oddde579byac}) holds.
 \end{lem}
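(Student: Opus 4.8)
The plan is to differentiate the scalar first-order equation \eqref{m1adadu} for $U$ with respect to the parameter $k$ and analyze the resulting linear equation for $W(x,k):=\partial U(x,k)/\partial k$, in the spirit of the variational arguments of Coffman~\cite{C} and Kwong~\cite{K}. Since $U(0)=0$ for every admissible $k$, we have $W(0)=0$, and the well-posedness and smooth dependence on parameters recalled at the beginning of Section~4 guarantee that $W$ is defined and continuous on $[0,x_k)$ (the term $(2-N)U\cot x$ in \eqref{m1adadu} produces only a regular singular point at $x=0$, which the integral-equation formulation handles). Writing \eqref{m1adadu} as $F(U,U',x,k)=0$, with $F_{U'}=(p-1)U^{2}+1>0$, and differentiating in $k$ yields a linear equation of the form
\[
\big((p-1)U^{2}+1\big)\,W' \;=\; \Phi(x,k)\,W \;+\; (1-p)\,(U^{2}+1)^{2},
\]
where $\Phi(x,k)$ is continuous on every compact subinterval of $(0,x_k)$ (it carries the singular term $(2-N)(3U^{2}+1)\cot x$). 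The decisive point is that the inhomogeneous term $(1-p)(U^{2}+1)^{2}$ is \emph{strictly negative} on $(0,x_k)$, since $p>1$.

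First I would record the behavior of $W$ near the origin. From \eqref{m1adadude} one has $U'(0)=\big((1-p)k+p-N\big)/(N-1)$, which depends on $k$ with $k$-derivative $(1-p)/(N-1)<0$; equivalently, analyzing the displayed linear equation near the regular singular point $x=0$ with the integrating factor $(\sin x)^{N-2}$ gives $W(x)=\frac{1-p}{N-1}\,x+o(x)$ as $x\to0^{+}$. In particular $W<0$ on some interval $(0,\varepsilon)$.

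The core of the proof is a trapping argument. Suppose, for contradiction, that $W$ vanishes somewhere in $(0,x_k)$, and let $x_{0}\in(0,x_k)$ be the smallest zero of $W$; by the previous step $x_{0}\ge\varepsilon>0$, $W<0$ on $(0,x_{0})$, $W(x_{0})=0$, and hence $W'(x_{0})\ge0$. Because $x_{0}<x_k$, $U(x_{0})$ is finite, so evaluating the linear equation at $x_{0}$ and using $W(x_{0})=0$ gives
\[
\big((p-1)U^{2}(x_{0})+1\big)\,W'(x_{0}) \;=\; (1-p)\,(U^{2}(x_{0})+1)^{2} \;<\;0,
\]
so $W'(x_{0})<0$, a contradiction. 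Therefore $W<0$ throughout $(0,x_k)$, which is \eqref{eq:oddde579byac}. For part~(b), when $p\ge N$ and $k<0$ one still has $U'(0)=\big((1-p)k+p-N\big)/(N-1)>0$ (see \eqref{m1adadudddde}) and $U$ enjoying the properties of Lemma~\ref{lem2axxf278}(b); the two steps above then apply verbatim with the range $k<\frac{N-p}{1-p}$ replaced by $k<0$.

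I expect the main obstacle to be the endpoint bookkeeping rather than the mechanism itself: one must justify that $W=\partial U/\partial k$ exists and is continuous on $[0,x_k)$ despite the $\cot x$ singularity at $x=0$ — this is precisely where the integral-equation set-up of Section~4 and the classical parameter-differentiability results are invoked — and one must check that $\Phi(x,k)$ remains finite on compact subintervals of $(0,x_k)$, so that $W$ cannot escape to $\pm\infty$ before reaching the hypothetical first zero $x_{0}$. Both are routine once stated, but they are what makes the contradiction in the trapping step rigorous.
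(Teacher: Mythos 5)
Your proposal is correct and follows essentially the same route as the paper: differentiate \eqref{m1adadu} in $k$ to obtain a linear first-order equation for $W=\partial U/\partial k$ with $W(0)=0$ and strictly negative inhomogeneous term $(1-p)(U^{2}+1)^{2}$, then conclude $W<0$ on $(0,x_k)$. The only (inessential) difference is the last step: the paper writes the explicit variation-of-constants formula \eqref{m1ad66d} and reads off the sign, whereas you argue by contradiction at a hypothetical first zero of $W$; both hinge on the same sign of the forcing term, and your explicit attention to the regular singular point of $Q$ at $x=0$ is a point the paper passes over silently.
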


{\bf Proof.}  {\bf (a)}   Following Coffman~\cite{C} and Kwong~\cite{K}, we set
   \begin{eqnarray}
 W={\frac {\partial U(x,k)}{\partial k}},
 \label{eq:oddde579byacd}
 \end{eqnarray}
  take the partial derivative   of (\ref{m1adadu}) and (\ref{m1adadud})
  and
   with respect to $k,$ and obtain
  \begin{eqnarray}
    W'= QW+(1-p)\frac{\left ( U^{2}+1 \right )^{2}}{(p-1)U^{2}+1},~~~W(0)=0,
  \label{m1adadu2}
  \end{eqnarray}
 where
    \begin{eqnarray}
    Q={\frac {(1-p) \left ( 2UU'+4 \left ( U^{3}+U \right ) k \right )+2U(p-N)+(3U^{2}+1)(2-N)\cot(x)}{(p-1)U^{2}+1}}.
 \label{m1ad66}
 \end{eqnarray}
An integration of (\ref{m1adadu2}) gives
     \begin{eqnarray}
W=e^{\int_{0}^{x}Q(t)dt}\int_{0}^{x}(1-p)\frac{(U^{2}(t)+1)^{2}}{(p-1)U^{2}+1}e^{-\int_{0}^{t}Q(u)du}dt<0\textrm{ for all }
x\in (0,x_{k}).
 \label{m1ad66d}
 \end{eqnarray}
 This completes the proof of  {\bf (a).} The proof of {\bf (b)} is  the same as {\bf (a),}
 and  we omit the details for the sake of brevity.

  \begin{lem}\label{lem2axxf34dd}
   {\bf (a)} Let   $N>2$    and $1<p<N,$
   and assume that there exist
   $k_{1}<k_{2}<{\frac {N-p}{1-p}},$  and corresponding solutions $y_{1}$ and $y_{2}$ of
  (\ref{m1addaa})-(\ref{m1addddaa}) which satisfy    (\ref{m3wdd4dddd}).
     Then
         \begin{eqnarray}
  {\frac {\partial U(x,k)}{\partial k}}<0\textrm{  for all }\in [k_{1},k_{2}]\textrm{ and for all } x \in \left ( 0,
    {\frac {\pi}{2}} \right ).
 \label{eq:odddiuyac}
 \end{eqnarray}

 \smallskip \noindent   {\bf (b)} For $p \ge N$
 assume that there exist
   $k_{1}<k_{2}<0,$  and corresponding solutions $y_{1}$ and $y_{2}$ of
  (\ref{m1addaa})-(\ref{m1addddaa}) which satisfy     (\ref{m3wdd4dddd}).
  Then property
   (\ref{eq:odddiuyac}) holds.

 \end{lem}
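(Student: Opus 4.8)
The statement to be proved (Lemma~\ref{lem2axxf34dd}) upgrades the single-$k$ sign property $\partial_k U<0$ from Lemma~\ref{lem2axxf34d} to the \emph{uniform} statement that $\partial_k U(x,k)<0$ on $[k_1,k_2]\times(0,\pi/2)$, under the standing assumption that both endpoints $k_1<k_2$ produce solutions satisfying (\ref{m3wdd4dddd}). The natural route, following Kwong~\cite{K} and Coffman~\cite{C}, is a continuation argument in $k$: let
\[
A=\{\,k\in[k_1,k_2] : \partial_k U(x,k)<0 \text{ for all } x\in(0,\pi/2)\,\}.
\]
By Lemma~\ref{lem2axxf34d} we already have $k_1\in A$ (in fact every $k<\tfrac{N-p}{1-p}$ belongs provided $x_k\ge\pi/2$, which holds for the relevant $k$), and openness of $A$ in $[k_1,k_2]$ follows from continuous dependence of $U$ and $W=\partial_k U$ on $k$ over the compact interval $[0,\pi/2]$ — note that by Lemma~\ref{lem2axxf29} the solution extends $C^1$ up to $\pi/2$ with $-\infty<y'(\pi/2)<0$, equivalently $U$ and $W$ stay finite at $\pi/2$, so there is room to perturb. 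The crux is \textbf{closedness}: if $k_n\in A$ and $k_n\to k_\infty\in[k_1,k_2]$, one must rule out that $W(x,k_\infty)$ touches $0$ at some interior $x^\ast\in(0,\pi/2)$. This is where the structure of the linear ODE (\ref{m1adadu2}) for $W$ is used.

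\textbf{Key steps.} First I would record that $W$ solves the linear first‑order equation $W'=QW+(1-p)\frac{(U^2+1)^2}{(p-1)U^2+1}$ with $W(0)=0$, whose inhomogeneous term is strictly negative (since $1-p<0$); hence $W'(0)<0$ and the integral representation (\ref{m1ad66d}) gives $W<0$ as long as the representation is valid, i.e.\ as long as $Q$ and $U$ remain finite. Since both endpoint solutions reach $x=\pi/2$ with finite data (Lemma~\ref{lem2axxf29}), for $k$ in a neighborhood of each of $k_1,k_2$ the coefficient $Q(t)$ in (\ref{m1ad66}) is integrable on $[0,\pi/2]$ — the only potential singularity is the $\cot x$ and $\csc^2 x$ terms near $x=0$, but these are controlled because $U(0)=0$, $U'(0)>0$, so $U(x)\sim U'(0)x$ and $3U^2+1\to1$, making $(3U^2+1)(2-N)\cot x$ merely bounded near $0$ after the cancellation $U\cot x\to$ finite is \emph{not} available; rather one checks directly from (\ref{m1adadu2}) that the singular part of $Q$ is $(2-N)\cot x + o(1)$, so $e^{\int_0^x Q}$ behaves like $x^{\,2-N}$ near $0$, and the weight $(U^2+1)^2 e^{-\int_0^t Q}$ behaves like $t^{\,N-2}$, so the integrand in (\ref{m1ad66d}) is integrable and $W(x)<0$ on $(0,\pi/2]$. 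The second step is the continuation: $A$ is nonempty, open (continuous dependence on $[0,\pi/2]$), and closed (the integral formula (\ref{m1ad66d}) is stable under uniform limits since the exponential weights stay uniformly integrable for $k$ in the compact set $[k_1,k_2]$, using the uniform lower bound on $|H|$ away from $0$ and finiteness at $\pi/2$); hence $A=[k_1,k_2]$.

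\textbf{Main obstacle.} The delicate point is the uniformity of the integrability of $Q$ near $x=0$ \emph{as $k$ ranges over all of $[k_1,k_2]$}, and the stability of the sign under the limit — one must ensure the weight $e^{-\int_0^t Q}$ does not degenerate so badly that the strictly negative contribution of the inhomogeneity is lost in the limit, and that no interior zero of $W$ can form. I expect to handle this by observing that at a hypothetical first interior zero $x^\ast$ of $W(\cdot,k_\infty)$ one has $W(x^\ast)=0$, $W'(x^\ast)\ge0$, yet (\ref{m1adadu2}) forces $W'(x^\ast)=(1-p)\frac{(U^2+1)^2}{(p-1)U^2+1}<0$, an immediate contradiction — so in fact $W$ can \emph{never} vanish in the interior once the ODE and initial condition $W(0)=0$ are in force, which makes closedness automatic and reduces the whole lemma to verifying that the solution $U(\cdot,k)$ exists on all of $[0,\pi/2)$ for every $k\in[k_1,k_2]$, i.e.\ that $x_k\ge\pi/2$ throughout. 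That last fact is exactly what the standing hypothesis (existence of two solutions $y_1,y_2$ satisfying (\ref{m3wdd4dddd}), together with monotonicity of $x_k$ in $k$ from the shooting analysis of Section~4) supplies. Thus the proof reduces to: (i) the pointwise ODE obstruction ruling out interior zeros of $W$, and (ii) a brief verification that $U(\cdot,k)$ survives up to $\pi/2$ for all $k\in[k_1,k_2]$; part {\bf (b)} is identical with $\tfrac{N-p}{1-p}$ replaced by $0$.
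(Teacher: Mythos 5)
Your reduction of the lemma is essentially the right one, and your observation that $W=\partial U/\partial k$ can never acquire a first interior zero (at such a point one would need $W=0$ and $W'\ge 0$, while (\ref{m1adadu2}) forces $W'=(1-p)(U^2+1)^2/((p-1)U^2+1)<0$) correctly reproduces the content of Lemma~\ref{lem2axxf34d}. The genuine gap is in what you call step (ii), the ``brief verification'' that $U(\cdot,k)$ exists on all of $[0,\pi/2)$ for every intermediate $k\in[k_1,k_2]$. You derive this from ``monotonicity of $x_k$ in $k$ from the shooting analysis of Section~4,'' but no such monotonicity is proved anywhere in the paper: the shooting lemmas only show that $x_k\in(0,\pi]$ exists for each admissible $k$, that $x_k>\pi/2$ for $k$ in the open sets $S_1,S_2$, and that $x_k\le\pi/2$ for some sufficiently negative $k$. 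Monotonicity of the blow-up time is not an obvious fact, and any attempt to deduce it from $\partial_k U<0$ runs into a circularity, since that inequality is only known on $(0,x_k)$ for each fixed $k$. This uniform existence for intermediate $k$ is precisely the new content of Lemma~\ref{lem2axxf34dd}, not a footnote to it.

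The paper closes exactly this gap with a continuation in $x$ rather than in $k$: it sets $S_3=\{\hat x\in(0,\pi/2): U(x,k)$ exists for all $k\in[k_1,k_2]$ and all $x\in(0,\hat x]\}$, shows $S_3$ is open and nonempty via the a priori bound $U'\le \frac{p^2}{p-1}(-2k_1+1)$ valid while $0\le U\le 1$, and then rules out $x_*=\sup S_3<\pi/2$ as follows. If $U(\cdot,k_*)$ blows up at $x_*$ for some $k_*\in(k_1,k_2)$ while $U(\cdot,k_1)$ remains finite there, then the difference $U(x,k_*)-U(x,k_1)$, which vanishes at $x=0$ and has strictly negative derivative there (because $U'(0,k)=\frac{(1-p)k+p-N}{N-1}$ is strictly decreasing in $k$), must have a first zero $\tilde x<x_*$; but $U(\tilde x,k_*)-U(\tilde x,k_1)=\int_{k_1}^{k_*}\partial_k U(\tilde x,k)\,dk<0$ by Lemma~\ref{lem2axxf34d}, since $\tilde x<x_k$ for every $k\in[k_1,k_2]$. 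This is a contradiction, so $S_3=(0,\pi/2)$. You need to supply this argument (or an equivalent one) in place of the unproved monotonicity claim; once that is done, the rest of your outline, including the integrability discussion of $Q$ near $x=0$, is sound though more elaborate than necessary.
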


\begin{proof}  {\bf (a)}
 The first step in proving (\ref{eq:odddiuyac}) is to define
     \begin{eqnarray}
     S_{3}=  {\Bigg \{} \hat x \in \left ( 0,  {\frac {\pi}{2}} \right )~{\Big |}~U(x,k)~~{\rm exists}~~
\textrm{ for all } k \in [k_{1},k_{2}]~~{\rm and}\textrm{ for all } x \in \left ( 0, \hat x \right ] {\Bigg \}}.
 \label{eq:oddddiuyac}
 \end{eqnarray}
 We need to prove that
      \begin{eqnarray}
   {\rm S_{3}~is~open,}~~S_{3} \ne \phi~~{\rm and}~~S_{3}=\left ( 0,  {\frac {\pi}{2}} \right ).
 \label{eq:oddddddiuyac}
 \end{eqnarray}
 First, we show that $S_{3}$ is open. Let $\bar x \in S_{3}.$ We need to prove that there exists $\bar \epsilon>0$ such that
       \begin{eqnarray}
 [\bar x,\bar x+\bar \epsilon) \subset S_{3}.
 \label{eq:o33ddac}
 \end{eqnarray}
 Since $[k_{1},k_{2}]$ is compact, it    follows  from  Lemma~\ref{lem2axxf278} and continuity of solutions with respect to
 intial conditions and parameters that there exist $\delta>0$ and $\bar \epsilon >0$ such that
        \begin{eqnarray}
 \delta <U'(x,k) <\infty~~~\forall x \in [\bar x,\bar x+ \bar \epsilon]~~{\rm and}~~\forall k \in [k_{1},k_{2}].
 \label{eq:odd33ddac}
 \end{eqnarray}
 Thus, $U(x,k)$ exists   fora all $x \in [\bar x,\bar x+ \bar \epsilon)$ and
  for all $ k \in [k_{1},k_{2}],$ hence
 $\left [\bar x,\bar x+\bar \epsilon \right ) \subset S_{3}$ as claimed.

\smallskip \noindent  Next, we show that $S_{3} \ne \phi.$
  Ignoring the negative terms in (\ref{m1adadu}), we conclude that
 \begin{eqnarray}
       \left ( (p-1)U^{2}+1 \right )  U' \le -kp\left ( U^{2}+1 \right )^{2} + p\left (U^{2}+1 \right ).
 \label{m1adaduzz}
 \end{eqnarray}
 Dividing both sides by  by $(p-1)U^{2}+1,$ and using the fact that $-k\,p \le -k_{1}\,p,$ gives
  \begin{eqnarray}
      U' \le {\frac {U^{2}+1}{(p-1)U^{2}+1}}
    \left (  -pk_{1} \left ( U^{2}+1 \right )  + p \right ).
 \label{m1adaduzddz}
 \end{eqnarray}
Next, note that we have
    \begin{eqnarray}
 0< {\frac {U^{2}+1} { (p-1)U^{2}+1 } } \le {\frac {p}{p-1}}.
  \label{m1ddad111d}
 \end{eqnarray}
 Combining (\ref{m1adaduzddz}) and (\ref{m1ddad111d}), we conclude that
   \begin{eqnarray}
      U' \le
      {\frac {p^{2}}{p-1}}
    \left (  -k_{1} \left ( U^{2}+1 \right )  + 1\right )
     \le
      {\frac {p^{2}}{p-1}}
    \left (  -2k_{1}    + 1\right )
   \label{m1ad55dd5}
 \end{eqnarray}
 for $x \in \left [ 0, {\frac {\pi}{2}} \right )$ as long as $0\le U(x)\le 1.$
  An integration gives
    \begin{eqnarray}
      U(x)
        \le 1~~\forall x \in
        \left [0, {\frac {p-1}{p^{2}\left ( -2k_{1} +1\right )}}\right ]~~{\rm and}~~\forall k \in [k_{1},k_{2}].
 \label{m1a4d557dd5}
 \end{eqnarray}
Because $p>1,$ it   is easily verified that
  $ {\frac {p-1}{p^{2}\left [ -2k_{1} +1\right )}}<{\frac {\pi}{2}}.$
This fact and  (\ref{m1a4d557dd5}) imply that
     \begin{eqnarray}
  \left (0, {\frac {p-1}{p^{2}\left [ -2k_{1} +1\right )}}\right ]  \subset S_{3}.
 \label{m1a4dd557dd5}
 \end{eqnarray} This proves that $S_{3} \ne \phi.$ It remains to
 show  that $S_{3}=\left (0, {\frac {\pi}{2}} \right ).$ Suppose, however that
      \begin{eqnarray}
  x_{*}=\sup S_{3}<{\frac {\pi}{2}}.
 \label{m1a4ddd557dd5}
 \end{eqnarray}
 Then there exists $k_{*} \in \left (k_{1},k_{2} \right )$ such that $U(x,k_{*})$ ceases to exist at $x_{*},$ hence
   it must be the
 case that $x_{k_{*}}=x_{*}$ and $U(x,k_{*}) \to \infty$ as $x \to x_{*}^{-}.$ Since $U(x,k_{1})$ is finite   for all $x \in [0,x_{*}],$ we conclude that
       \begin{eqnarray}
  \lim_{x \to x_{*}^{-}}\left (U(x,k_{*})-U(x,k_{1})\right )=\infty.
 \label{m1a4dddd557dd5}
 \end{eqnarray}
 At $x=0,$ it follows from    (\ref{m1adadu}) and (\ref{m1adadud}) that
      \begin{eqnarray}
      U(0,k_{*})-U(0,k_{1})=0~~~{\rm and}~~~U'(0,k_{*})-U'(0,k_{1})=(k_{*}-k_{1})\frac{(1-p)}{N-1}<0.
 \label{m1ad90}
  \end{eqnarray}
 From (\ref{m1a4dddd557dd5}) and (\ref{m1ad90}) we conclude that there is an $\tilde x \in (0,x_{*})$ such that
       \begin{eqnarray}
      U(x,k_{*})-U(x,k_{1})<0~~~\forall x \in (0,\tilde x)~~~{\rm and}~~~U(\tilde x,k_{*})-U(\tilde x,k_{1})=0.
 \label{m1add902}
  \end{eqnarray}
 However, since $\tilde x<x_{*},$ it follows from  from (\ref{eq:odddiuyac}) and continuity  that
        \begin{eqnarray}
 U(\tilde x,k_{*})-U(\tilde x,k_{1})=\int_{k_{1}}^{k_{*}} {\frac {\partial U(\tilde x,k)}{\partial k}}dk<0,
 \label{m1addd90d2}
  \end{eqnarray}
 contradicting (\ref{m1add902}). We conclude that
  \begin{eqnarray}
 S_{3}=\left (0, {\frac {\pi}{2}} \right )
 \label{eq:oddddddtac}
 \end{eqnarray}
 as claimed.
   Finally, it follows from inequality (\ref{eq:oddde579byac}), the definition of $S_{3},$  (\ref{eq:oddddddtac}) and continuity that
  \begin{eqnarray}
  {\frac {\partial U(x,k)}{\partial k}}<0~~~\forall k \in [k_{1},k_{2}]~~{\rm and}~~\forall x \in \left ( 0,
    {\frac {\pi}{2}} \right ).
 \label{eq:odddiduyac}
 \end{eqnarray}

\end{proof}

\bigskip \noindent {\bf  The Final Step (V)} \par
 \smallskip \noindent {\bf Case (a)} Let $1<p<N$
   and assume that there exist
   $k_{1}<k_{2}<{\frac {N-p}{1-p}}<0,$  and corresponding solutions $y_{1}$ and $y_{2}$ of
  (\ref{m1addaa})-(\ref{m1addddaa}) which satisfy     (\ref{m3wdd4dddd}). It follows from (\ref{m1addddaa}), (\ref{m3wdd4dddd})
   and Lemma~\ref{lem2axxf29} that there exist values $\lambda_{1}>0$ and $\lambda_{2}>0$ such that
        \begin{eqnarray}
 y_{1}(0)=y_{2}(0)=1,~y_{1}'(0)=y_{2}'(0)=0,
   \label{eq:oerrdddiduyac}
 \end{eqnarray}
     \begin{eqnarray}
 y_{1} \left ( {\frac {\pi}{2}}\right )= y_{2} \left ( {\frac {\pi}{2}}\right )=0,~
  y_{1}' \left ( {\frac {\pi}{2}}\right )=-\lambda_{1}<0~~{\rm and}~~y_{2}' \left ( {\frac {\pi}{2}}\right
  )=-\lambda_{2}<0.
 \label{eq:oedddiduyac}
 \end{eqnarray}
 Next, we conclude from (\ref{eq:odddiuyac}) and continuity that
  \begin{eqnarray}
 U(x,k_{2})-U(x,k_{1})=\int_{k_{1}}^{k_{2}}{\frac {\partial U(x,k)}{\partial k}}dk<0~~~\forall
  x \in \left ( 0, {\frac {\pi}{2}} \right ).
  \label{eq:oddde579byab}
 \end{eqnarray}
  Substituting  $U(x,k_{1})={\frac {1}{k_{1}}}{\frac {y_{1}'(x)}{y_{1}(x)}}$ and
  $U(x,k_{2})={\frac {1}{k_{2}}}{\frac {y_{2}'(x)}{y_{2}(x)}}$  into (\ref{eq:oddde579byab}), we obtain
    \begin{eqnarray}
    {\frac {1}{k_{1}}}{\frac {y_{1}'(x)}{y_{1}(x)}}>{\frac {1}{k_{2}}}{\frac {y_{2}'(x)}{y_{2}(x)}}>0~~~\forall
  x \in \left ( 0, {\frac {\pi}{2}} \right ).
  \label{eq:odddedd}
 \end{eqnarray}
   An integration, together with (\ref{eq:oerrdddiduyac}),  gives
    \begin{eqnarray}
    y_{1}(x)-\left (y_{2}(x)\right )^{{\frac {k_{1}}{k_{2}}}}\le 0~~~\forall
  x \in \left [0, {\frac {\pi}{2}} \right )
  \label{eq:dddodddedd}
   \end{eqnarray}
  Since ${\frac {k_{1}}{k_{2}}}>1,$ it follows from (\ref{eq:oedddiduyac})   that
      \begin{eqnarray}
    {\frac {d}{dx}} \left (y_{1}(x)-\left (y_{2}(x)\right )^{{\frac {k_{1}}{k_{2}}}} \right ) {\Bigg |}_{x={\frac
    {\pi}{2}}}=-\lambda_{1}<0.
  \label{eq:ddddodddedd}
 \end{eqnarray}
We conclude from  (\ref{eq:oedddiduyac}) and (\ref{eq:ddddodddedd})
there is an $\epsilon>0$ such that
     \begin{eqnarray}
    y_{1}(x)-\left (y_{2}(x)\right )^{{\frac {k_{1}}{k_{2}}}}>0\textrm{  for all }  x \in \left (  {\frac {\pi}{2}}-\epsilon, {\frac {\pi}{2}} \right ),
  \label{eq:dd3dodddedd}
   \end{eqnarray}
  contradicting (\ref{eq:dddodddedd}). Thus,   $k_{1}$ and $k_{2}$ cannot exist and the proof is complete.

 \smallskip \noindent {\bf Case (b)} For $ p \ge N,$
    assume that there exist
   $k_{1}<k_{2}<0,$  and corresponding solutions $y_{1}$ and $y_{2}$ of
  (\ref{m1addaa})-(\ref{m1addddaa}) which satisfy     (\ref{m3wdd4dddd}). A contradiction is obtained in exactly the same
  way as in {\bf Case (a)}, and we omit the details for brevity.

 \end{document}